\DeclareMathOperator{\id}{id}
\DeclareMathOperator{\Ker}{Ker}
\DeclareMathOperator{\Soc}{Soc}
\DeclareMathOperator{\op}{op}
\DeclareMathOperator{\ab}{ab}
\DeclareMathOperator{\Res}{Res}
\DeclareMathOperator{\Inf}{Inf}
\DeclareMathOperator{\Tra}{Tra}
\DeclareMathOperator{\Img}{Im}
\newcommand{\cD}{\mathcal{D}}
\newcommand{\Z}{\mathbb{Z}}
\newcommand{\C}{\mathbb{C}}
\newcommand{\B}{\mathcal{B}}
\newcommand{\Aut}{\operatorname{Aut}}
\newcommand{\Hom}{\operatorname{Hom}}
\newcommand{\End}{\mathrm{End}}
\newcommand{\PGL}{\mathrm{PGL}}
\newcommand{\GL}{\mathrm{GL}}
\newcommand{\Ann}{\operatorname{Ann}}
\newcommand{\Ext}{\operatorname{Ext}}
\numberwithin{equation}{section}
\numberwithin{figure}{section}
\numberwithin{table}{section}
\newtheorem{thm}{Theorem}[section]
\newtheorem*{thm*}{Theorem}
\newtheorem{lem}[thm]{Lemma}
\newtheorem{cor}[thm]{Corollary}
\newtheorem{pro}[thm]{Proposition}
\newtheorem{notation}[thm]{Notation}
\newtheorem{defn}[thm]{Definition}
\newtheorem{convention}[thm]{Convention}
\newtheorem{rem}[thm]{Remark}
\newtheorem{exa}[thm]{Example}
\title{Schur covers of skew braces}
\author{T. Letourmy}
\author{L. Vendramin}
\address{Department of Mathematics and Data Science, Vrije Universiteit Brussel, Pleinlaan 2, 1050 Brussel, Belgium}
\email{Leandro.Vendramin@vub.be}
\address{Départament de Mathéematique, Université Libre de Bruxelles, Boulevard du Triomphe, B-1050 Brussels, Belgium; and 
Department of Mathematics and Data Science, Vrije Universiteit Brussel, Pleinlaan 2, 1050 Brussel, Belgium}
\email{thomas.letourmy@ulb.be}
\subjclass[2010]{Primary:16T25; Secondary: 81R50}
\keywords{Yang-Baxter, Schur multiplier, Schur cover, Cohomology, 
Skew brace, Representations, Projective representation}
\begin{document}

\begin{abstract}
    We develop the theory of Schur covers of finite skew braces. We prove
    the existence of at least one Schur cover. We also 
    compute several examples. We 
    prove that different Schur covers are
    isoclinic. Finally, we prove that Schur covers
    have the lifting property concerning
    projective representations of skew braces. 
\end{abstract}

\maketitle

\setcounter{tocdepth}{1}

\section*{Introduction}

In the combinatorial theory 
of the Yang--Baxter equation, several algebraic structures
play a fundamental role: groups of I-type \cite{MR1637256}, 
bijective 1-cocycles \cite{MR1722951,MR1809284}, 
matched pair of groups \cite{MR2383056} 
braided groups \cite{MR1769723} and, 
remarkably, skew braces \cite{MR3647970,MR2278047}. Skew 
braces have their germ in the theory of
Jacobson radical rings 
and show connections with several
areas of mathematics such as
triply factorized groups and Hopf--Galois structures; see
for example \cite{MR4427114,MR3291816,MR3763907}. 

A \emph{skew brace} is a triple $(A,+,\circ)$, where $(A,+)$ 
and $(A,\circ)$ are (not necessarily abelian) groups and
\[
a\circ (b+c)=a\circ b-a+a\circ c
\]
holds for all $a,b,c\in A$.

Skew braces produce solutions to the set-theoretic 
Yang--Baxter
equation. Furthermore, all 
set-theoretic solutions come from skew braces; see 
for example \cite{MR3763907}. 

In some sense, skew braces are ring-like objects. There exists the notion of 
ideals (and hence quotients) of
a skew brace. For example, if $A$ is a skew brace, the socle
$\Soc(A) = \{a\in A : -a+a\circ b = b\;\forall b\in A\}\cap Z(A,+)$ 
and the annihilator $\Ann(A)=\Soc(A)\cap Z(A,\circ)$ 
of $A$ are both ideals of $A$. 

Although there is a rich structure theory of skew braces, 
several aspects are still under development. A prominent example 
is the cohomology theory defined in 
\cite{MR3530867,MR3558231}. 
In the literature, one finds papers where 
different theories of extensions 
of skew braces are considered; see for example 
\cite{MR3763276,MR3917122}. 
These ideas were later 
generalized \cite{MR4604853} to a theory of 
extensions of skew braces by abelian groups. 

The annihilator of 
a skew brace (which is somewhat the center of a skew brace) 
plays an important role in the theory; 
see for example \cite{MR4663905,MR4627847}. 
Our goal is to develop a not-yet-explored  
application of the cohomology of skew braces. We deal, in particular, 
with some universal annihilator extensions. These ideas 
are deeply connected to the representation theory 
of skew braces. 

Schur introduced Schur covers in \cite{MR1580818} 
to classify projective representations of groups. A Schur cover of a group is a certain
universal central extension. This object's strength is based on
the so-called \emph{lifting property}. The calculation
of Schur covers is still a significant problem; for example, the book 
\cite{MR1205350} considers the case of the symmetric group. 

The role that skew braces play in several mathematical theories
suggests that a deep understanding of cohomology and representation 
theory of skew braces will make an impact in areas such as non-commutative ring theory \cite{MR4023387,MR3814340}, 
the combinatorial study of the Yang--Baxter equation \cite{MR2584610,MR3177933,MR3861714}, 
and Hopf--Galois structures \cite{MR4390798,MR3763907}. 
For that reason, studying
representations, cohomology, and, in particular, 
Schur covers of skew braces 
seems to be of fundamental importance. 

This work proves that every finite skew brace admits at least one Schur cover (Theorem \ref{thm:existence}). 
For this purpose, we develop the theory of Schur multipliers. This includes
basic results (e.g. Theorems \ref{thm:M(Q)exp} and \ref{thm:M(Q)finite}) and
a Hoschild--Serre-type exact sequence (Theorem \ref{thm:HoschildSerre}). 
Several concrete examples of Schur multipliers and covers 
are computed (e.g. Theorems \ref{thm:M(Gop)}, \ref{thm:Schur_cyclic_abelian} 
and \ref{thm:directproductbicyclic} and Propositions  \ref{pro:perfectgroup} and \ref{pro:coverbicyclic}). 
In group theory, Schur covers 
are not unique up to isomorphism; they are isoclinic \cite{MR0003389}. Isoclinism of skew braces 
was defined in \cite{zbMATH07779852}. We prove here that
any two Schur covers of a finite skew brace 
are isoclinic (Theorem \ref{thm:isoclinism}). Finally, 
inspired by \cite{MR4504147}, we introduce projective 
representations of skew braces 
and prove that Schur covers have the lifting property 
(Theorem \ref{thm:lift}).

\section{Preliminaries}
\label{preliminaries}

A \emph{skew brace} is a triple $(A,+,\circ)$, where $(A,+)$ 
and $(A,\circ)$ are (not necessarily abelian) groups and
\[
a\circ (b+c)=a\circ b-a+a\circ c
\]
holds for all $a,b,c\in A$.

If $A$ is a skew brace, 
the group $(A,+)$ 
will be called the \emph{additive} group
of $A$ and denoted by $A_+$ and 
$(A,\circ)$ will be called the 
\emph{multiplicative} group of $A$ and denoted by $A_\circ$. 
If $x\in A$, then 
$x'$ will denote the inverse of $x$ with respect to the circle operation. 

A skew brace is said to be of \emph{abelian type} if its additive group is abelian. 

Let $A$ and $B$ be skew braces. 
A skew brace \emph{homomorphism} is a map $f\colon A\to B$ such that
$f(x+y)=f(x)+f(y)$ and $f(x\circ y)=f(x)\circ f(y)$ for all $x,y\in A$. 

\begin{exa}
\label{exa:trivial}
    Any group $G$ is a trivial skew brace
    with the operations $x+y=xy$ and $x\circ y=xy$. 
\end{exa}

\begin{convention}
    Let $G$ be a group. When treating $G$ as a skew brace, we will
    refer to the trivial skew brace structure of Example \ref{exa:trivial}. 
    Conversely, if $(B,+,\circ)$ is a trivial
    skew brace, we will often treat $B$ as the group $(B,+)$. 
\end{convention}

\begin{exa}
   Let $G$ be a group. The operations $x\circ y=yx$ and 
    $x+y=xy$ turn $G$ into a skew brace; it 
    is known as the \emph{almost trivial skew brace} over $G$.
\end{exa}

\begin{exa}
    Let $R$ be a radical ring. This means that 
    $(R,\circ)$, where
    \[
    x\circ y=x+xy+y,
    \]
    is a group. 
    Then $(R,+,\cdot)$ is a skew brace
    of abelian type such that
    \[
    (x+y)\circ z=x\circ z-z+y\circ z
    \]
    holds for all $x,y,z\in R$. 
\end{exa}

\begin{exa}
\label{exa:A(n,d)}
    Let $n\geq2$ be an integer and $d$ a divisor of $n$ such that every prime divisor of $n$ divides $d$. 
    Then $\Z/(n)$ with the usual addition modulo $n$ and 
    \[
    x\circ y=x+y+dxy
    \]
    is a skew brace of abelian type. 
    This brace will be denoted
    by $C_{(n,d)}$. Note that $d=|\Soc(C_{(n,d)})|$.
\end{exa}

If $A$ is a skew brace, then the map 
    $\lambda\colon (A,\circ)\to\Aut(A,+)$,
    $a\mapsto\lambda_a$,
    where $\lambda_a(b)=-a+a\circ b$, 
    is a group homomorphism. 
An \emph{ideal} $I$ of $A$ is 
a subset of $A$ such that $(I,+)$ is a normal subgroup
of $(A,+)$, $(I,\circ)$ is a normal subgroup of $(A,\circ)$ 
and $\lambda_a(I)\subseteq I$ for all $a\in A$.
We write $A^{(2)}=A*A$ to denote the additive subgroup of $A$ generated 
by $\{a*b:a,b\in A\}$, where
$a*b=-a+a\circ b-b$. 
One proves that $A^{(2)}$ is an ideal of $A$. If $a,b\in A$, 
we write $[a,b]_+=a+b-a-b$. We write $[A,A]_+$ to
denote the commutator subgroup of the additive group of $A$.  
The \emph{commutator} $A'$ of $A$ is
    the additive subgroup of $A$ generated by $[A,A]_+$ and $A^{(2)}$. 
    It can be proved that $A'$ is an ideal of $A$ and that it is the smallest one such that $A^{\ab}=A/A'$ is an abelian group. 

\subsection{Group cohomology} 

Let $G$ be a group and $K$ an abelian group. The group $Z^2(G,K)$ of \emph{factor sets} (or 2-cocycles) consists of maps $f\colon G\times G\to K$ 
such that $f(1,y)=f(x,1)=0$ and 
\[ 
f(y,z)-f(xy,z)+f(xy,z)-f(x,y)=0
\]
for all $x,y,z\in G$. Let 
$B^2(G, K)$ be the group of \emph{2-coboundaries}, which is the subgroup of maps 
$f\in Z^2(G,K)$ that can be written as 
\[
f(x,y)=g(y)-g(xy)+g(x)
\]
for some map $G\to K$. We say that two factor sets are \emph{cohomologous} if their difference is a $2$-coboundary.
The \emph{second cohomology group} of $G$ with coefficients in $K$ is defined 
as the group 
\[
H^2(G,K)=Z^2(G,K)/B^2(G,K).
\]
For $f\in Z^2(G,K)$, we write 
$\overline{f}=f+B^2(G,K)$. 

A \emph{central extension} 
of a group $G$ by an abelian group $K$ is a group $E$ together
with a short exact sequence of groups 
     \begin{equation}
         \begin{tikzcd}
	        0 & K & {E} & G & 0
	        \arrow[from=1-1, to=1-2]
	        \arrow["\iota", from=1-2, to=1-3]
	        \arrow["\pi", from=1-3, to=1-4]
	        \arrow[from=1-4, to=1-5]
        \end{tikzcd}
     \end{equation}
    such that $\iota(K)\subseteq Z(E)$. 

    We say that two central extensions $E$ and $E_1$ of $G$ by $K$ 
    are \emph{equivalent} if 
    there exists a group homomorphism $\gamma\colon E\to E_1$ such that
    the diagram
    \begin{equation}
    \begin{tikzcd}
    && E \\
	0 & K && G & 0 \\
	&& {E_1}
	\arrow[from=2-1, to=2-2]
	\arrow[from=2-4, to=2-5]
	\arrow[from=2-2, to=3-3]
	\arrow[from=3-3, to=2-4]
	\arrow["\gamma", from=1-3, to=3-3]
	\arrow[from=2-2, to=1-3]
	\arrow[from=1-3, to=2-4]
    \end{tikzcd}
    \end{equation}
    is commutative. 
    
    Let $\Ext(G,K)$ be the 
    set of equivalence classes of central extensions of $G$ by $K$. There
    is a bijective correspondence between 
    $H^2(G,K)$ and $\Ext(G,K)$. See for example \cite[11.1.4]{MR1357169} for details.

Let $N$ and $T$ be finite groups. 
Recall that a factor set 
$f\in Z^2(N\times T,\C^{\times})$ is said to be \emph{normal} 
if 
$f(n,t)=0$ for all $n\in N$ and $t\in T$, 
where $N$ is identified with $N\times\{1\}$ and 
$T$ with $\{1\}\times T$.

\begin{lem}
\label{lem:K}
    Let $N$ and $T$ be finite groups. 
    
    \begin{enumerate}
        \item Each $f\in Z^2(N\times T,\C^{\times})$ is cohomologous to a normal factor set $g$ such that $g|_{T\times T}=f|_{T\times T}$. 
        \item Every normal factor set $g$ is such that
            \[ 
            g(nt,n't')= g(n,n')g(t,n')g(t,t')
            \]
            for all $n,n'\in N$ and $t,t'\in T$.
        \item Given $\lambda\colon N\times N\to \C^{\times}$, $\sigma\colon T\times N\to \C^{\times}$ and $\rho\colon 
        T\times T\to \C^{\times}$, the map
        \[
        f(nt,n't')= \lambda(n,n')\sigma(t,n')\rho(t,t')
        \]
        is a normal factor set if and only if $\lambda\in Z^2(N,\C^{\times})$, $\rho\in Z^2(T,\C^{\times})$ and $\sigma \in \Hom(T,\Hom(N,\C^{\times}))$.
    \end{enumerate}
\end{lem}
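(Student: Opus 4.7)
The plan is to treat the three parts in sequence. The main work is in (1), where I need to construct an explicit normalizing 1-cochain; (2) will follow from repeated applications of the 2-cocycle identity together with normality; and (3) is a direct expansion of the cocycle axiom, exploiting the direct product structure of $N\times T$.

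For (1), given $f\in Z^2(N\times T,\mathbb{C}^\times)$, I would define a 1-cochain $h\colon N\times T\to\mathbb{C}^\times$ by $h(n,t)=f((n,1),(1,t))$ for generic $(n,t)$ and $h(n,1)=h(1,t)=1$ on the axes. Using the multiplicative form $(dh)(x,y)=h(y)h(xy)^{-1}h(x)$ of the coboundary map from the preliminaries, a short calculation gives $(dh)((n,1),(1,t))=f((n,1),(1,t))^{-1}$ while $(dh)((1,t),(1,t'))=1$. Hence $g:=f\cdot dh$ is cohomologous to $f$, is normal, and coincides with $f$ on $T\times T$. The subtlety is that $h$ must be chosen trivial on $\{1\}\times T$ so that the $T\times T$-restriction is preserved.

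For (2), starting from a normal factor set $g$, I would apply the cocycle identity $g(x,y)g(xy,z)=g(x,yz)g(y,z)$ with carefully chosen triples from $N\times T$. A first application with $x=(n,t)$, $y=(n',1)$, $z=(1,t')$ uses normality to kill $g((n',1),(1,t'))$ and yields $g((n,t),(n',t'))=g((n,t),(n',1))\,g((nn',t),(1,t'))$. A second, with $x=(nn',1)$, $y=(1,t)$, $z=(1,t')$, reduces the last factor to $g((1,t),(1,t'))$. Two further applications, with $x=(n,1)$, $y=(1,t)$, $z=(n',1)$ and then $x=(n,1)$, $y=(n',1)$, $z=(1,t)$, similarly factor $g((n,t),(n',1))$ as $g((n,1),(n',1))\,g((1,t),(n',1))$. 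Combining these and identifying $N$ with $N\times\{1\}$ and $T$ with $\{1\}\times T$ yields the claimed factorization.

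For (3), the formula $f(nt,n't')=\lambda(n,n')\sigma(t,n')\rho(t,t')$ is well defined because each element of $N\times T$ factors uniquely as $nt$. Expanding the cocycle identity with $x=nt$, $y=n't'$, $z=n''t''$ and collecting terms, the direct product structure decouples the equation into three independent identities: a 2-cocycle identity on $\lambda$, a 2-cocycle identity on $\rho$, and the mixed identity $\sigma(t,n')\sigma(tt',n'')=\sigma(t,n'n'')\sigma(t',n'')$. Specializing the last identity to $t'=1$ and then to $n'=1$, together with the normalizations $\sigma(1,n)=\sigma(t,1)=1$ (which themselves follow from $f$ being a normalized cocycle), yields the two bihomomorphism conditions equivalent to $\sigma\in\Hom(T,\Hom(N,\mathbb{C}^\times))$. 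The converse direction is immediate by reversing these implications. The most delicate bookkeeping throughout is in (2), where the order of cocycle applications must be arranged so that normality can be invoked at each step.
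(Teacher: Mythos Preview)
The paper does not actually prove this lemma; it simply cites Karpilovsky, \emph{Group Representations}, Lemmas 2.2.3 and 2.2.4. Your outline is precisely the standard direct argument one finds in that reference: the normalizing cochain $h(n,t)=f((n,1),(1,t))$ for (1), iterated applications of the cocycle identity for (2), and specialization of the cocycle identity for (3). So your proof supplies exactly what the paper defers to the literature, and parts (1) and (2) are correct as written.

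One small wrinkle in part (3): your claim that the normalizations $\sigma(1,n)=\sigma(t,1)=1$ ``follow from $f$ being a normalized cocycle'' is not quite right, because the decomposition $f(nt,n't')=\lambda(n,n')\sigma(t,n')\rho(t,t')$ does not determine $\lambda,\sigma,\rho$ uniquely. From $f((1,1),(n',1))=1$ you obtain only $\lambda(1,n')\sigma(1,n')\rho(1,1)=1$, not $\sigma(1,n')=1$ by itself; likewise the cocycle identity does not automatically ``decouple'' into three independent identities until the boundary normalizations are in hand. The clean fix is to observe that, by part (2), every normal factor set arises from the specific choice $\lambda=f|_{N\times N}$, $\sigma=f|_{T\times N}$, $\rho=f|_{T\times T}$, and these restrictions inherit the required normalizations directly from $f$. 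With that choice your specialization argument goes through verbatim. (Strictly speaking the ``only if'' direction as phrased in the lemma is false for arbitrary $\lambda,\sigma,\rho$ producing a given $f$; this is a minor imprecision in the statement rather than a defect in your method.)
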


\begin{proof}
    See \cite[Lemmas 2.2.3 and 2.2.4]{MR1200015}.
\end{proof}

For a finite group $G$, let $M(G)=H^2(G,\C^\times)$ denote the Schur multiplier of $G$.

The following theorem is known as the Schur--K\"uneth formula. We will use the notation $G^{\ab}=G/[G,G]$ for the \emph{abelianization} of a group
$G$.

\begin{thm}
Let $N$ and $T$ be finite groups. Then the map
\begin{align*}
\phi\colon M(N)\times T^{\ab}\otimes N^{\ab}\times M(T) 
\to M(N\times T)
\end{align*}
given by $\phi\left( \overline{f_{N}},f_{T\times N},\overline{f_T}\right)= \overline{f}$, where 
\[
f(nt,ms)= f_{N}(n,m)f_{N\times T}(t,m)f_{T}(t,s)
\]
is an isomorphism and $T^{\ab}\otimes N^{\ab}$ is identified 
with $\Hom(T,\Hom(N,\C^{\times}))$.
\label{thm:directproduct}
\end{thm}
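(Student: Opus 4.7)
The plan is to verify that $\phi$ is well-defined via Lemma \ref{lem:K}(3), then prove surjectivity with Lemma \ref{lem:K}(1) and injectivity by a short calculation that exploits the commutation of the two factors inside $N\times T$. First, the identification of $T^{\ab}\otimes N^{\ab}$ with $\Hom(T,\Hom(N,\C^\times))$ is Pontryagin duality for finite abelian groups: since $\C^\times$ is divisible, $\Hom(-,\C^\times)$ is an exact contravariant duality on the category of finite abelian groups, and the adjunction $A\otimes B\cong\Hom(A,\Hom(B,\C^\times))$ becomes an isomorphism in this setting (applied to $A=T^{\ab}$ and $B=N^{\ab}$, noting that any homomorphism from $N$ to an abelian group factors through $N^{\ab}$). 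Well-definedness of $\phi$ then follows because changing $\lambda$ by $\lambda\cdot\delta a$ and $\rho$ by $\rho\cdot\delta b$ alters the normal cocycle produced by Lemma \ref{lem:K}(3) by the coboundary of $nt\mapsto a(n)b(t)$; the defining formula is multiplicative in each input, so $\phi$ is a homomorphism of abelian groups.

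For surjectivity, let $\overline{f}\in M(N\times T)$. Lemma \ref{lem:K}(1) yields a normal representative $g$ of $\overline{f}$, and Lemma \ref{lem:K}(2) shows $g$ is completely determined by its restrictions $g|_{N\times N}$, $g|_{T\times N}$, and $g|_{T\times T}$. By Lemma \ref{lem:K}(3) these belong respectively to $Z^2(N,\C^\times)$, $\Hom(T,\Hom(N,\C^\times))$, and $Z^2(T,\C^\times)$, so by construction
\[
\overline{f}=\overline{g}=\phi\bigl(\overline{g|_{N\times N}},\,g|_{T\times N},\,\overline{g|_{T\times T}}\bigr).
\]

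For injectivity, suppose $\phi(\overline{\lambda},\sigma,\overline{\rho})=\overline{f}=\overline{0}$ and write $f=\delta h$ for a normalized $h\colon N\times T\to\C^\times$. Restriction to $N\times N$ gives $\lambda(n,m)=h(n,1)h(m,1)h(nm,1)^{-1}$, so $\lambda$ is a coboundary and $\overline{\lambda}=0$; symmetrically $\overline{\rho}=0$. The triviality of $\sigma$ is the key step. Normality of $f$ gives $f((n,1),(1,t))=1$, and the coboundary formula then forces $h(n,t)=h(n,1)h(1,t)$ for all $n\in N$, $t\in T$. Since $(1,t)(n,1)=(n,t)$ in the direct product,
\[
\sigma(t,n)=f((1,t),(n,1))=h(1,t)\,h(n,1)\,h(n,t)^{-1}=1,
\]
so $\sigma=0$ in $\Hom(T,\Hom(N,\C^\times))$. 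The main obstacle I anticipate is the book-keeping of the Pontryagin-duality identification together with the normalization conventions, so that the three ``restriction'' outputs line up precisely with the three inputs of $\phi$; once these conventions are pinned down, the argument collapses, with the injectivity in particular resting on the elementary observation that the commutation of the two factors in $N\times T$ kills the mixed component of any normal coboundary.
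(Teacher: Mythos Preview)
Your proof is correct and follows essentially the same approach as the paper: both arguments use Lemma~\ref{lem:K} to build the map at the cocycle level, establish surjectivity from parts (1)--(2), and prove injectivity by showing that if the associated normal cocycle $f$ is a coboundary $\delta h$ then $\lambda,\rho$ are coboundaries and $\sigma(t,n)=f(t,n)=f(n,t)=1$ (your two-step version, deriving $h(n,t)=h(n,1)h(1,t)$ first, is just an unpacking of the paper's one-line equality). The only organizational difference is that the paper packages well-definedness and injectivity into a single ``iff'' claim about when $\psi(\lambda,\sigma,\rho)$ is a coboundary, whereas you treat them separately.
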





\begin{proof}
    Lemma \ref{lem:K} yields a map 
    \[
    \psi\colon Z^2(N,\C^{\times})\times \Hom(T,\Hom(N,\C^{\times}))\times Z^2(T,\C^{\times})\to Z^2(G,\C^{\times}).
    \]
    We claim that the map $f=\psi(\lambda,\sigma,\rho)$ is a coboundary if and only if $\lambda$ and $\rho$ are coboundaries and $\sigma(t)(n)=1$ for all $t\in T$ and $n\in N$. 
    Assume that $f$ is a coboundary. There exists a map $h\colon G\to \C^{\times}$ such that $f=\partial h$. Thus
    \begin{align*}
        &\lambda(n,n')=h(n)h(n+n')^{-1}h(n'),
        &&\rho(t,t')=h(t)h(t+t')^{-1}h(t')
    \end{align*}
    for all $t,t'\in T$ and $n,n'\in N$. In addition, 
    \[
    \sigma(t,n)=f(t,n)= h(n)h(nt)^{-1}h(t)=f(n,t)=1.
    \]
    
    Conversely, assume that $\sigma$ is trivial and there exist maps $l\colon N\times N\to \C^{\times}$ and $k\colon T\times T\to \C^{\times}$ such that $\lambda=\partial l$ and $\rho=\partial k$. Let
    $h\colon G\to \C^{\times}$, $h(n,t)=l(n)k(t)$. Then $f=\partial h$.

    Now the map $\psi$ factors through $\phi$ and hence $\phi$ is injective. 
    Furthermore, $\phi$ is surjective by Lemma \ref{lem:K}. 
\end{proof}

\subsection{Annihilator extensions}

We recall (without proofs) a particular case of the theory of 
extensions of skew braces introduced in \cite{MR4604853}. 

In this section, $K$ will always denote an abelian group written multiplicatively. 

\begin{defn}
    Let $f\colon A\to Q$ be a skew brace homomorphism. We define a
    \emph{transversal} 
    of $f$ as a map $t\colon Q\to A$ such that $ft=\id_Q$ and 
    $t(0)=1$. 
\end{defn}

\begin{defn}
    A \emph{datum} $\cD$ is a pair $(Q,K)$, where 
    $Q$ is a skew brace, $K$ is an abelian group.
\end{defn}

\begin{notation}
    If $\cD=(Q,K)$ is a datum, 
    $\cD_+=(Q_+,K)$ and $\cD_\circ=(Q_\circ,K)$. 
\end{notation}

\begin{defn}
    An \emph{annhilator extension} 
    realizing the datum $\cD = (Q,K)$ is a skew brace $E$ together
    with a short exact sequence 
     \begin{equation}
         \label{eq:SE}
         \begin{tikzcd}
	        0 & K & {E} & Q & 0
	        \arrow[from=1-1, to=1-2]
	        \arrow["\iota", from=1-2, to=1-3]
	        \arrow["\pi", from=1-3, to=1-4]
	        \arrow[from=1-4, to=1-5]
        \end{tikzcd}
     \end{equation}
    such that $\iota(K)\subseteq\Ann(E)$. In particular, 
    \begin{equation}
    \label{eq:extaction}\iota(k)=t(x)+\iota(k)-t(x) \quad\text{and}\quad
    \iota(k)=t(x)\circ \iota(k)- t(x)
    \end{equation}
    for all $k\in K$ and 
    $x\in Q$ and all transversal 
    $t\colon Q\to E$ of $\pi$. 
\end{defn}
\begin{rem}
\label{rem: 1transversalenough}
    If $t$ and $t'$ are both transversals of $\pi$ in \eqref{eq:SE}, then
    \begin{align*}
        t(x)+\iota(k)-t(x)&=t'(x)+\iota(k)-t'(x),\\
        t(x)\circ \iota(k)- t(x) &= t'(x)\circ \iota(k)- t'(x)
    \end{align*}
    for all $x\in Q$. 
    That is \eqref{eq:extaction} may be reformulated as follows: there exists a transversal $t\colon Q\to E$ of $\pi$ such that 
    \[
    \iota(k)=t(x)+\iota(k)-t(x)\quad
    \text{and}\quad
    \iota(k)=t(x)\circ \iota(k)- t(x)
    \]
    for all $k\in K$ and 
    $x\in Q$. 
\end{rem}
\begin{rem}

\label{rem:decomposition}
    If an annihilator extension \eqref{eq:SE} realizes the 
    datum $(Q,K)$, 
    then every $a\in E$ can be written 
    uniquely as 
    $a=\iota(k)+t(x)$ 
    for some $k\in K$ and $x\in Q$. Moreover, 
    $a=\iota(k)+t(x)=\iota(k)\circ t(x)$, 
    as $\iota(K)\subseteq\Ann(E)$. 
\end{rem}

\begin{defn}
\label{defn:FS}
    A \emph{factor set (or 2-cocycle)}
    of the datum $(Q,K)$ is a pair $(\alpha,\mu)$, where 
    $\alpha,\mu\colon Q\times Q\to K$ are such that
    \begin{align}
        \label{eq:FS1}&\alpha(0,y)=\alpha(x,0)=1,\\
        \label{eq:FS2}&\mu(0,y)=\mu(x,0)=1,\\
        \label{eq:FS3}&\alpha(y,z)\alpha(x+y,z)^{-1}\alpha(x,y+z)\alpha(x,y)^{-1}=1,\\
        \label{eq:FS4}&\mu(y,z)\mu(x\circ y,z)^{-1}\mu(x,y\circ z)\mu(x,y)^{-1}=1
    \end{align}
    hold for all $x,y,z\in Q$ 
    and the compatibility condition
    \begin{equation}
    \begin{aligned}
        \label{eq:FS5}\alpha(y,z)&\mu(x,y+z)
        \alpha(-x,x)=
        \mu(x,z)
        \alpha(x\circ y,-x)\alpha(x\circ y-x,x\circ z)\mu(x,y)
    \end{aligned}
    \end{equation}
    holds for all $x,y,z\in Q$. 
\end{defn}
\begin{rem}
    By \eqref{eq:FS3},
    \begin{align*}
    &\alpha(x\circ y,-x)^{-1}\alpha(x\circ y-x,x\circ z)^{-1}= \alpha(-x,x\circ z)^{-1}\alpha(x\circ y,\lambda_x(z))^{-1}\\
    &\alpha(x,-x)\alpha(-x,x\circ z)^{-1}= \alpha(x,\lambda_x(z)).
    \end{align*}
    Thus \eqref{eq:FS5} is equivalent to 
    \begin{equation}
    \label{eq:equivFS5}
    \begin{aligned}
        \alpha(y,z)\alpha(x\circ y,\lambda_x(z))^{-1}\alpha(x,\lambda_x(z))
        =\mu(x,y)\mu(x,y+z)^{-1}\mu(x,z).
    \end{aligned}
    \end{equation}
\end{rem}


\begin{notation}
    We write $Z^2_b(\cD)$ to denote 
    the set of all factors sets of the datum $\cD=(Q,K)$. 
\end{notation}

The set $Z^2_b(\cD)$ is an abelian group with
\[
(\alpha,\mu)(\alpha_1,\mu_1)=(\alpha\alpha_1,\mu\mu_1),
\]    
where $\alpha\alpha_1$ and $\mu\mu_1$ are defined point-wise. 

\begin{pro}
   If an extension \eqref{eq:SE} realizes the datum 
   $\cD=(Q,K)$ and $t\colon Q\to E$ is a transversal of $\pi$, then
   \begin{align*}
       &\alpha\colon Q\times Q\to K, && (x,y)\mapsto t(x)+t(y)-t(x+y),\\
       &\mu\colon Q\times Q\to K, && (x,y)\mapsto t(x)\circ t(y)\circ t(x+y)', 
   \end{align*}
   satisfy conditions 
   \eqref{eq:FS1}--\eqref{eq:FS5} of Definition \ref{defn:FS}. 
\end{pro}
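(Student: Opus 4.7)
The approach is to verify the five conditions of Definition \ref{defn:FS} by direct computation in $E$, using the unique decomposition $a = \iota(k) + t(x) = \iota(k)\circ t(x)$ of Remark \ref{rem:decomposition} together with the fact that $\iota(K)\subseteq\Ann(E)$. The annihilator assumption is crucial: it makes every element $\iota(k)$ central in both $(E,+)$ and $(E,\circ)$, and, since $\Ann(E)\subseteq\Soc(E)$, it also gives $\iota(k)\circ a = \iota(k)+a$ for every $a\in E$. This lets me move $\iota(K)$-factors past $t(\cdot)$ freely with respect to either operation and lets me strip $\iota$ off at the end, since $\iota$ is injective.

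Conditions \eqref{eq:FS1} and \eqref{eq:FS2} are immediate, since $t(0)=1$ is the common identity of $(E,+)$ and $(E,\circ)$. The 2-cocycle identities \eqref{eq:FS3} and \eqref{eq:FS4} follow, respectively, from associativity of $+$ and $\circ$ applied to the triple $(t(x),t(y),t(z))$: expanding each associator according to the definition of $\alpha$ (resp.\ $\mu$) and appealing to injectivity of $\iota$ yields the cocycle relations. This is the classical argument for central extensions of groups, applied separately to $(E,+)$ and $(E,\circ)$.

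The substantive content is \eqref{eq:FS5}, which I plan to derive in its equivalent form \eqref{eq:equivFS5} from the skew brace axiom
\[
t(x)\circ\bigl(t(y)+t(z)\bigr) = t(x)\circ t(y) - t(x) + t(x)\circ t(z)
\]
in $E$. For the left-hand side, rewrite $t(y)+t(z) = \iota(\alpha(y,z))+t(y+z)$, push $\iota(\alpha(y,z))$ past $t(x)$ using centrality and the socle property, then expand $t(x)\circ t(y+z)$ via $\mu$; this produces
\[
\iota\bigl(\alpha(y,z)\mu(x,y+z)\bigr) + t\bigl(x\circ(y+z)\bigr).
\]
For the right-hand side, expand $t(x)\circ t(y)$ and $t(x)\circ t(z)$ via $\mu$, and replace $-t(x)$ by $t(-x) - \iota(\alpha(-x,x))$, which follows from $t(-x)+t(x) = \iota(\alpha(-x,x))$. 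The resulting three-term additive expression $t(x\circ y) + t(-x) + t(x\circ z)$ collapses into a single $t$-term using the cocycle identity \eqref{eq:FS3} for $\alpha$ twice, noting that $x\circ y - x = \lambda_x(y)$ and $\lambda_x(y)+x\circ z = x\circ(y+z)$. Equating the two sides, stripping $\iota$, and recognizing $\alpha(\lambda_x(y),x\circ z) = \alpha(x\circ y - x, x\circ z)$ yields \eqref{eq:FS5}.

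The main obstacle is the bookkeeping in the last step: ordering the moves correctly, ensuring that every $\alpha$-correction introduced when collapsing the additive three-term expression matches the factors displayed in \eqref{eq:FS5}, and handling the signs around $-t(x)$. Working with \eqref{eq:equivFS5} is advantageous because it regroups precisely the $\alpha$-factors that arise naturally from the expansion, making the comparison essentially term-by-term.
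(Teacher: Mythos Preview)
Your approach is correct and is the standard verification: normalization from $t(0)=1$, the two group cocycle identities from associativity of $+$ and $\circ$, and \eqref{eq:FS5} from the skew brace axiom applied to $t(x),t(y),t(z)$ in $E$, using centrality of $\iota(K)$ to commute the $\iota$-factors to the front. Your bookkeeping for the right-hand side is accurate; the substitution $-t(x)=t(-x)-\iota(\alpha(-x,x))$ and the two applications of the definition of $\alpha$ to collapse $t(x\circ y)+t(-x)+t(x\circ z)$ produce exactly the factors $\alpha(x\circ y,-x)$, $\alpha(-x,x)^{-1}$, and $\alpha(x\circ y-x,x\circ z)$ displayed in \eqref{eq:FS5}, so you land on \eqref{eq:FS5} directly rather than on \eqref{eq:equivFS5} (your final paragraph already says this, despite the earlier announcement).

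There is nothing to compare against: the paper explicitly states this proposition without proof, as part of the subsection recalling the extension theory of \cite{MR4604853}. One minor point worth flagging is that the displayed definition of $\mu$ in the statement reads $t(x)\circ t(y)\circ t(x+y)'$, which is a typo for $t(x)\circ t(y)\circ t(x\circ y)'$; your argument (and the paper's later use of $\mu$, e.g.\ in the proof of Theorem~\ref{thm:HoschildSerre}) uses the correct version.
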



\begin{pro}
    Let $(\alpha,\mu)$ be a factor set of $\cD=(Q,K)$. 
    Then $K\times Q$ with operations 
    \[
         (a,x)+(b,y)=(ab\alpha(x,y),x+y),\quad 
         (a,x)\circ (b,y)=(ab\mu(x,y),x\circ y), 
     \]
     is an extension realizing the datum $\cD$. This skew brace
     structure on $K\times Q$ will be denoted by 
     $K\times_{(\alpha,\mu)} Q$. 
\end{pro}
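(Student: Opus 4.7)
The plan is to equip $K\times Q$ with the two specified binary operations, verify the group axioms for each, check the skew brace compatibility, and finally exhibit the data making it an annihilator extension realizing $\cD$. Throughout, the identity of both $(K\times Q,+)$ and $(K\times Q,\circ)$ will be $(1,0)$, the embedding $\iota\colon K\to K\times_{(\alpha,\mu)}Q$ will be $k\mapsto(k,0)$, and the projection will be $\pi\colon(a,x)\mapsto x$.

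\textbf{Group axioms.} Normalization \eqref{eq:FS1} immediately yields $(1,0)+(b,y)=(b,y)$ and $(a,x)+(1,0)=(a,x)$. Associativity for $+$ reduces, after expanding, to $\alpha(x,y)\alpha(x+y,z)=\alpha(y,z)\alpha(x,y+z)$, which is exactly \eqref{eq:FS3}. For inverses, solving $(a,x)+(b,-x)=(1,0)$ forces $b=a^{-1}\alpha(x,-x)^{-1}$, while solving $(b,-x)+(a,x)=(1,0)$ forces $b=a^{-1}\alpha(-x,x)^{-1}$; these agree because specializing \eqref{eq:FS3} at $(x,-x,x)$ and using \eqref{eq:FS1} gives $\alpha(x,-x)=\alpha(-x,x)$. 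The multiplicative group $(K\times Q,\circ)$ is treated identically using \eqref{eq:FS2} and \eqref{eq:FS4}, with $\circ$-inverse $(a^{-1}\mu(x,x')^{-1},x')$.

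\textbf{Skew brace identity.} This is the heart of the proof. Expanding the two sides of
\[
(a,x)\circ\bigl((b,y)+(c,z)\bigr)=(a,x)\circ(b,y)-(a,x)+(a,x)\circ(c,z),
\]
the $Q$-components agree by the skew brace identity already holding in $Q$. For the $K$-components, the left side evaluates to $abc\,\alpha(y,z)\mu(x,y+z)$, while the right side, computed step by step using the inverse formula above, collects to $abc\,\mu(x,y)\alpha(x,-x)^{-1}\alpha(x\circ y,-x)\mu(x,z)\alpha(x\circ y-x,x\circ z)$. After cancelling $abc$ and using that $K$ is abelian together with $\alpha(x,-x)=\alpha(-x,x)$, the required identity becomes exactly the compatibility axiom \eqref{eq:FS5}. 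Tracking these terms without sign or order errors is the main obstacle; everything else is routine.

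\textbf{Extension structure.} The normalizations \eqref{eq:FS1}--\eqref{eq:FS2} make $\iota$ a skew brace homomorphism, and $\pi$ is obviously a surjective homomorphism with $\Ker\pi=\iota(K)$, giving the short exact sequence \eqref{eq:SE}. To see $\iota(K)\subseteq\Ann(E)$, note that $(k,0)+(b,y)=(kb,y)=(b,y)+(k,0)$ and $(k,0)\circ(b,y)=(kb,y)=(b,y)\circ(k,0)$ directly from \eqref{eq:FS1} and \eqref{eq:FS2}, so $(k,0)$ lies in the centres of both $(E,+)$ and $(E,\circ)$. A short computation using the inverse formulas and $\alpha(z,-z)=\alpha(-z,z)$ then gives $-(k,0)+(k,0)\circ(b,y)=(b,y)$ and $\lambda_{(b,y)}\!\bigl((k,0)\bigr)=(k,0)$, placing $(k,0)$ in $\Soc(E)\cap Z(E,\circ)=\Ann(E)$.
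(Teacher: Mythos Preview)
Your proof is correct. Note that the paper does not actually supply a proof of this proposition: the subsection on annihilator extensions opens with ``We recall (without proofs) a particular case of the theory of extensions of skew braces introduced in \cite{MR4604853},'' and the statement is presented as background. Your direct verification of the group axioms from \eqref{eq:FS1}--\eqref{eq:FS4}, of the skew brace identity from \eqref{eq:FS5}, and of the annihilator condition is exactly the expected route and is carried out accurately; in particular the reduction of the compatibility of the two operations to \eqref{eq:FS5} (after the observation $\alpha(x,-x)=\alpha(-x,x)$) is the key step and you have it right.
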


    

\begin{notation}
    Let $(Q,K)$ be a datum.
    Given a map $h\colon Q\rightarrow K$, we define 
    \begin{align*}
        &\partial_+ h\colon Q\times Q\to K,\quad (x,y) \mapsto h(y)h(x+y)^{-1}h(x),\\
        &\partial_{\circ} h\colon Q\times Q\to K,\quad (x,y) \mapsto h(y)h(x\circ y)^{-1}h(x).
    \end{align*}
\end{notation}

\begin{defn}
    A \emph{coboundary} of the datum $(Q,K)$ is a
    pair $\alpha\colon Q\times Q\to K$ and 
    $\mu\colon Q\times Q\to K$ such that there 
    is a map $h\colon Q\to K$ with $h(0)=1$ such that 
    \begin{align*}
        &\alpha(x,y)=\partial_+h(x,y)\quad\text{and}\quad
        \mu(x,y)=\partial_{\circ}h(x,y)        
    \end{align*}
    hold for all $x,y\in Q$. 
\end{defn}

\begin{notation}
    We write $B^2_b(\cD)$ to denote the
    set of coboundaries of the datum $\cD=(Q,K)$. 
\end{notation}

\begin{rem}
    A coboundary of the datum $\cD$ is a factor set. 
\end{rem}

\begin{defn}
\label{defn:H2}
    The 2nd \emph{cohomology} group of $\cD=(Q,K)$ 
    is defined as the quotient 
    $H^2_b(\cD)=Z^2_b(\cD)/B^2_b(\cD)$.
\end{defn}

\begin{defn}
    Let $E$ and $E_1$ be two annihilator extensions realizing 
    $(Q,K)$. 
    We say that $E$ and $E_1$ are \emph{equivalent} if 
    there exists a skew brace homomorphism $\gamma\colon E\to E_1$ such that
    the diagram
    \begin{equation}
    \label{eq:equivalence}
    \begin{tikzcd}
    && E \\
	0 & K && Q & 0 \\
	&& {E_1}
	\arrow[from=2-1, to=2-2]
	\arrow[from=2-4, to=2-5]
	\arrow[from=2-2, to=3-3]
	\arrow[from=3-3, to=2-4]
	\arrow["\gamma", from=1-3, to=3-3]
	\arrow[from=2-2, to=1-3]
	\arrow[from=1-3, to=2-4]
    \end{tikzcd}
    \end{equation}
    is commutative. 
\end{defn}

\begin{rem}
    If the annihilator extensions $E$ and $E_1$ are equivalent, 
    the map $\gamma$ is indeed a skew brace isomorphism.  
\end{rem}

\begin{pro}
    Let $E$ and $E_1$ be annihilator extensions realizing the datum $(Q,K)$ 
    with respective factor sets $(\alpha,\mu)$ and $(\alpha_1,\mu_1)$. Then
    $E$ and $E_1$ are equivalent if and only if $(\alpha\alpha_1^{-1},\mu\mu_1^{-1})$ 
    is a coboundary.
\end{pro}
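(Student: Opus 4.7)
The plan is to prove both directions by analyzing how a potential skew brace isomorphism $\gamma\colon E\to E_1$ must relate the two chosen transversals, running the argument in parallel for the additive and circle operations. The structural fact that makes the two-operation version of the classical proof go through is that $\iota(K)\subseteq\Ann(E)$ forces $\iota_1(K)$ to be central in both $(E_1,+)$ and $(E_1,\circ)$, which is precisely the commutation input needed in each operation.

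For the forward direction, fix transversals $t,t_1$ producing $(\alpha,\mu)$ and $(\alpha_1,\mu_1)$, and suppose $\gamma$ realizes an equivalence. Since $\pi_1\gamma=\pi$, for each $x\in Q$ the elements $\gamma(t(x))$ and $t_1(x)$ lie in the same $\pi_1$-fiber, so there is a unique $h(x)\in K$ with $\gamma(t(x))=\iota_1(h(x))+t_1(x)$, and $h(0)=1$. Applying $\gamma$ to the identity $t(x)+t(y)=\iota(\alpha(x,y))+t(x+y)$ furnished by Remark \ref{rem:decomposition}, expanding both sides using $\gamma\iota=\iota_1$, commuting $\iota_1(h(y))$ past $t_1(x)$ by additive centrality, and invoking the uniqueness of the $\iota_1(K)\times t_1(Q)$ decomposition in $E_1$, one extracts
\[
\alpha(x,y)\,h(x+y)=h(x)h(y)\alpha_1(x,y).
\]
The identical computation with $\circ$ in place of $+$, now using multiplicative centrality, yields $\mu(x,y)\,h(x\circ y)=h(x)h(y)\mu_1(x,y)$. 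These two identities are exactly $\alpha\alpha_1^{-1}=\partial_+ h$ and $\mu\mu_1^{-1}=\partial_\circ h$, so $(\alpha\alpha_1^{-1},\mu\mu_1^{-1})$ is a coboundary.

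For the converse, given $h\colon Q\to K$ with $h(0)=1$ realizing the coboundary condition, define $\gamma\colon E\to E_1$ on the normal form from Remark \ref{rem:decomposition} by $\gamma(\iota(k)+t(x))=\iota_1(k\,h(x))+t_1(x)$. Then $\gamma\iota=\iota_1$ and $\pi_1\gamma=\pi$ are immediate from $h(0)=1$, and $\gamma$ is bijective by the uniqueness of the $K\times Q$ decomposition in both $E$ and $E_1$. Its compatibility with $+$ unwinds, after commuting central factors and comparing with the $\alpha_1$-decomposition in $E_1$, to the identity $\alpha(x,y)h(x+y)=h(x)h(y)\alpha_1(x,y)$, which is the hypothesis $\alpha\alpha_1^{-1}=\partial_+ h$; compatibility with $\circ$ is the parallel statement. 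The main point to keep straight is the bookkeeping between additive and multiplicative conventions ($K$ written multiplicatively, $E$ additively), rather than any genuine conceptual obstacle.
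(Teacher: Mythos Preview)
Your proof is correct and follows the standard approach: the forward direction extracts the map $h$ from the discrepancy between $\gamma\circ t$ and $t_1$, and the converse direction defines $\gamma$ on the normal form and verifies it is a homomorphism for both operations, using throughout that $\iota_1(K)\subseteq\Ann(E_1)$.

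Note, however, that the paper does not actually supply a proof of this proposition. It is stated in the preliminaries section, which the authors explicitly introduce with ``We recall (without proofs) a particular case of the theory of extensions of skew braces introduced in \cite{MR4604853}.'' So there is no argument in the paper to compare yours against; your write-up is the standard one and would serve perfectly well as the omitted proof.
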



We summarize the results of this section in the following statement:

\begin{thm}
\label{thm:bijection}
    There is a bijective correspondence between 
    equivalence classes of annihilator extensions 
    \[
    \begin{tikzcd}
	        0 & K & {E} & Q & 0
	        \arrow[from=1-1, to=1-2]
	        \arrow[from=1-2, to=1-3]
	        \arrow[from=1-3, to=1-4]
	        \arrow[from=1-4, to=1-5]
        \end{tikzcd}
    \]
    realizing the datum $\cD=(Q,K)$ 
    and elements of $H^2_b(\cD)$. 
\end{thm}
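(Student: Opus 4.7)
The plan is to assemble the three preceding propositions into a mutually inverse pair of maps. Define $\Phi$ from equivalence classes of annihilator extensions realizing $\cD=(Q,K)$ to $H^2_b(\cD)$ by: pick an extension $E$ with projection $\pi$, choose a transversal $t\colon Q\to E$ of $\pi$, form the associated factor set $(\alpha,\mu)$ given by the first proposition of the subsection, and set $\Phi([E])=\overline{(\alpha,\mu)}$. Define the inverse $\Psi$ by sending $\overline{(\alpha,\mu)}\in H^2_b(\cD)$ to the class of $K\times_{(\alpha,\mu)} Q$, whose existence and realizing property is given by the second proposition.

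The first task is to check that $\Phi$ is well-defined on equivalence classes. If $t'$ is another transversal of $\pi$, then $\pi(t'(x))=\pi(t(x))$, so there is a unique $h(x)\in K$ with $t'(x)=\iota(h(x))+t(x)$, and $h(0)=1$. Using $\iota(K)\subseteq\Ann(E)$ together with \eqref{eq:extaction}, a direct computation of the factor set $(\alpha',\mu')$ associated to $t'$ yields $\alpha'=\alpha\cdot\partial_+ h$ and $\mu'=\mu\cdot\partial_{\circ} h$, so the class in $H^2_b(\cD)$ is unchanged. Similarly, if $\gamma\colon E\to E_1$ realizes an equivalence of extensions with transversal $t$, then $\gamma\circ t$ is a transversal of the projection of $E_1$ that produces the same factor set, so $\Phi$ is constant on equivalence classes. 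Well-definedness of $\Psi$ is exactly the content of the third proposition.

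To finish, I would show the two maps are mutually inverse. For $\Phi\circ\Psi=\id$, take the transversal $x\mapsto(1,x)$ of $K\times_{(\alpha,\mu)} Q$; from the operations defining $K\times_{(\alpha,\mu)} Q$ its associated factor set is exactly $(\alpha,\mu)$. For $\Psi\circ\Phi=\id$, let $E$ realize $\cD$ with transversal $t$ and factor set $(\alpha,\mu)$. By Remark \ref{rem:decomposition} every $a\in E$ has a unique expression $a=\iota(k)+t(x)=\iota(k)\circ t(x)$, so the map
\begin{equation*}
\gamma\colon K\times_{(\alpha,\mu)} Q\to E,\qquad (k,x)\mapsto \iota(k)+t(x),
\end{equation*}
is a bijection that commutes with $\iota$ and $\pi$. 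Using $\iota(K)\subseteq\Ann(E)$, the defining identities $\alpha(x,y)=t(x)+t(y)-t(x+y)$ and $\mu(x,y)=t(x)\circ t(y)\circ t(x+y)'$ translate directly into the statements that $\gamma$ preserves $+$ and $\circ$, producing an equivalence of extensions.

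The main obstacle is the bookkeeping of the two operations simultaneously: when comparing factor sets for different transversals, and when checking $\gamma$ is a skew brace homomorphism, one must exploit that $\iota(k)$ is central with respect to both $+$ and $\circ$ — precisely where the annihilator hypothesis and the identities \eqref{eq:extaction} are essential. The additive check relies on \eqref{eq:FS3}, the multiplicative one on \eqref{eq:FS4}, and the compatibility condition \eqref{eq:FS5} is what guarantees that the $+$ and $\circ$ parts of the construction glue together to a genuine skew brace structure.
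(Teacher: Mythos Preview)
Your proof is correct and follows exactly the approach the paper has in mind: the theorem is stated in the paper as a summary of the three preceding propositions, and your argument is precisely the assembly of those propositions into mutually inverse maps. One tiny slip: in your final paragraph the formula for $\mu$ should read $\mu(x,y)=t(x)\circ t(y)\circ t(x\circ y)'$, not $t(x+y)'$.
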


As a consequence of Theorem \ref{thm:bijection}, it follows that
every annihilator extension realizing the datum $\cD=(Q,K)$ 
is equivalent to 
    \[
    \begin{tikzcd}
	        0 & K & K\times_{(\alpha,\mu)} Q & Q & 0
	        \arrow[from=1-1, to=1-2]
	        \arrow[from=1-2, to=1-3]
	        \arrow[from=1-3, to=1-4]
	        \arrow[from=1-4, to=1-5]
        \end{tikzcd}
    \]
for some $(\alpha,\mu)\in Z_b^2(\cD)$. 

\subsection{Cohomology of trivial and almost trivial skew braces}

\begin{defn}
    Let $\cD=(Q,K)$ be a datum. 
    The set of 
    \emph{pre-coboundaries} is defined as the subset 
    $P^2(\cD)$ of $Z^2_{b}(\cD)$ 
    that consists of all the pairs $(\partial_+h,\partial_{\circ}k)$. 
\end{defn}

\begin{defn}
    Let $\cD=(Q,K)$ be a datum. 
    The set of \emph{additive coboundaries} (resp. \emph{multiplicative coboundaries})
    is 
    $A^2(\cD)$ (resp. $W^2(\cD)$) is the subset of $Z^2_{b}(\cD)$
    that consists of all the pairs 
    $(\partial_+h,\mu)$ (resp. $(\alpha,\partial_{\circ}k)$).
\end{defn}

The following result is straightforward. 

\begin{pro}
\label{pro:K}
    Let $\cD=(Q,K)$ be a datum. 
    Then there exist canonical 
    group homomorphisms 
    \begin{align*}
    \delta &\colon H_b^2(\cD)\to H^2(\cD_+)\times H^2(\cD_\circ),\quad 
        \overline{(\alpha,\mu)}\mapsto (\overline{\alpha},\overline{\mu}),\\
    \delta_+ &\colon H_b^2(\cD)\to H^2(\cD_+),\quad 
        \overline{(\alpha,\mu)}\mapsto (\overline{\alpha}),\\
    \delta_{\circ} &\colon H_b^2(\cD)\to H^2(\cD_\circ),\quad 
        \overline{(\alpha,\mu)}\mapsto (\overline{\mu}).
    \end{align*}
\end{pro}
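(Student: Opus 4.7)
The plan is to verify that each of the three assignments descends to the quotient $H^2_b(\cD)$ and then observe that the resulting maps are homomorphisms. Since $\delta=(\delta_+,\delta_\circ)$, it suffices to handle $\delta_+$ and $\delta_\circ$ separately.

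First I would check that the components of a factor set already land in the appropriate cocycle groups. Given $(\alpha,\mu)\in Z^2_b(\cD)$, conditions \eqref{eq:FS1} and \eqref{eq:FS3} of Definition \ref{defn:FS} are exactly the normalization and $2$-cocycle identities for $\alpha$ viewed as a map $Q_+\times Q_+\to K$, so $\alpha\in Z^2(Q_+,K)$. In the same way, \eqref{eq:FS2} and \eqref{eq:FS4} give $\mu\in Z^2(Q_\circ,K)$. Hence the maps
\[
Z^2_b(\cD)\to Z^2(Q_+,K),\quad (\alpha,\mu)\mapsto\alpha,
\qquad
Z^2_b(\cD)\to Z^2(Q_\circ,K),\quad (\alpha,\mu)\mapsto\mu,
\]
are well defined, and composing with the projections to cohomology gives candidates for $\delta_+$ and $\delta_\circ$.

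Next I would show that these candidates kill $B^2_b(\cD)$, so that they factor through $H^2_b(\cD)$. If $(\alpha,\mu)\in B^2_b(\cD)$, by definition there is a single map $h\colon Q\to K$ with $h(0)=1$ such that $\alpha=\partial_+h$ and $\mu=\partial_\circ h$. Then $\alpha\in B^2(Q_+,K)$ and $\mu\in B^2(Q_\circ,K)$, so $\overline{\alpha}=0$ in $H^2(Q_+,K)$ and $\overline{\mu}=0$ in $H^2(Q_\circ,K)$. Therefore $\delta_+$, $\delta_\circ$ and hence $\delta$ descend to well-defined maps on $H^2_b(\cD)$.

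Finally, the homomorphism property is immediate from the definitions: the group law on $Z^2_b(\cD)$ is the componentwise pointwise product, which is exactly the pointwise products on $Z^2(Q_+,K)$ and $Z^2(Q_\circ,K)$ in each coordinate. There is no real obstacle in this proof; it is a definitional unpacking, which is why the authors describe the statement as straightforward. The only point requiring a moment of care is that a pre-coboundary $(\partial_+h,\partial_\circ k)$ with $h\neq k$ is \emph{not} in general a genuine coboundary, so the hypothetical factorizations of $\delta_+$ and $\delta_\circ$ through the larger set $P^2(\cD)$ need not exist; but this plays no role in the present statement, where only honest coboundaries are collapsed.
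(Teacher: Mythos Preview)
Your proof is correct and is precisely the routine verification the paper has in mind; the authors give no argument beyond the sentence ``The following result is straightforward,'' and your unpacking of Definitions~\ref{defn:FS} and~\ref{defn:H2} is exactly what that sentence encodes. The closing remark about pre-coboundaries is accurate but extraneous to the statement being proved.
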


\begin{notation}
    We write
    \begin{align*}
        \ker\delta &= S^2(\cD) \simeq P^2(\cD)/B_{b}(\cD),\\
        \ker\delta_+ &= S^2_+(\cD) \simeq A^2(\cD)/B_{b}(\cD),\\
        \ker\delta_\circ &= S^2_\circ(\cD) \simeq W^2(\cD)/B_{b}(\cD).
    \end{align*}
\end{notation}

\begin{notation}
\label{not:sequence}
    Let $\cD=(Q,K)$ be a datum. We write 
    $\sigma(\cD,\iota,\delta)$ 
    to denote the sequence 
    \[
    \begin{tikzcd}
        0\arrow[r] & S^2(\cD)\arrow[r,"\iota"] & H^2_{b}(\cD)\arrow[r,"\delta"] & H^2(\cD_+)\times H^2(\cD_\circ)\arrow[r] & 0,
    \end{tikzcd}
    \]
    of groups and homomorphisms, where 
    $\cD_+=(Q_+,K)$ and 
    $\cD_\circ=(Q_\circ,K)$ and
    $\delta$ is the map of Proposition \ref{pro:K}. 
    Similarly, 
    we write 
    $\sigma_+(\cD,\iota_+,\delta_+)$ 
    and $\sigma_{\circ}(\cD,\iota_\circ,\delta_{\circ})$ to denote the sequences
     \[
    \begin{tikzcd}
        0\arrow[r] & S_+^2(\cD)\arrow[r,"\iota"] & H^2_{b}(\cD)\arrow[r,"\delta_+"] & H^2(\cD_+)\arrow[r] & 0,
    \end{tikzcd}
    \]
    \[
    \begin{tikzcd}
        0\arrow[r] & S_{\circ}^2(\cD)\arrow[r,"\iota"] & H^2_{b}(\cD)\arrow[r,"\delta_{\circ}"] & H^2(\cD_\circ)\arrow[r] & 0,
    \end{tikzcd}
    \]
    respectively, where $\delta_+$ and $\delta_{\circ}$ are the maps of Proposition \ref{pro:K}. 
    To simplify the notation sometimes we will write for example $\sigma(\cD)=\sigma(\cD,\iota,\delta)$. 
\end{notation}

\begin{rem}
    The sequence $\sigma(\cD)$ is exact at $S^2(\cD)$ and $H^2_{b}(\cD)$ but it might 
    not be exact at $H^2(\cD_+)\times H^2(\cD_\circ)$. In Lemmas \ref{lem:section} and \ref{lem:unlabeled}, Theorems \ref{thm:Schur_cyclic_abelian},  \ref{thm:Mb(NxT)} and \ref{thm:directproductbicyclic}, we will present examples 
where some of the sequences of Notation \ref{not:sequence} are exact. 
\end{rem}

\begin{pro}
\label{pro:bilinearfactorset}
Let $D=(Q,K)$ be a datum and $x,y,z\in Q$, then an element of $S_+(\cD)$ has a factor set representative $(1,f)$ such that 
\begin{equation}
\label{eq:rightlin}
    f(x,y+z)= f(x,y)f(x,z)
\end{equation}
In addition, if $y+z=y\circ z$, then
\begin{equation}\label{eq:leftlin}
f(x\circ y,z)= f(x,z)f(y,z)    
\end{equation}
\end{pro}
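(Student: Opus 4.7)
The plan is to start from an arbitrary representative of a class in $S^2_+(\cD)\simeq A^2(\cD)/B^2_b(\cD)$, which by definition has the form $(\partial_+ h,\mu)$ for some $h\colon Q\to K$ and some $\mu\colon Q\times Q\to K$. Since the coboundary $(\partial_+ h,\partial_\circ h)$ lies in $B^2_b(\cD)$, multiplying $(\partial_+ h,\mu)$ by its inverse produces a cohomologous representative of the form $(1,f)$, where $f=\mu\,(\partial_\circ h)^{-1}$. Because $B^2_b(\cD)\subseteq Z^2_b(\cD)$, the new pair $(1,f)$ is still a factor set of the datum $\cD$, so all conditions of Definition \ref{defn:FS} apply to it.

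Next, I would simply read off what those conditions say when $\alpha=1$. Conditions \eqref{eq:FS1} and \eqref{eq:FS3} become vacuous, while \eqref{eq:FS2} just records the normalisation of $f$. The decisive step is the compatibility condition \eqref{eq:FS5}: with $\alpha\equiv1$, every $\alpha$-factor on both sides collapses to $1$, so the identity reduces to
\[
f(x,y+z)=f(x,z)\,f(x,y),
\]
which is precisely \eqref{eq:rightlin}. Thus right additivity of $f$ in the second variable is automatic for any representative with trivial additive part.

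Finally, to derive \eqref{eq:leftlin} under the extra hypothesis $y+z=y\circ z$, I would combine the right linearity just obtained with the multiplicative cocycle condition \eqref{eq:FS4}, namely
\[
f(y,z)\,f(x,y\circ z)=f(x\circ y,z)\,f(x,y).
\]
Under the assumption $y+z=y\circ z$, the factor $f(x,y\circ z)=f(x,y+z)$ splits via \eqref{eq:rightlin} as $f(x,y)f(x,z)$. Substituting and cancelling $f(x,y)$ yields $f(x\circ y,z)=f(x,z)f(y,z)$, which is \eqref{eq:leftlin}.

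There is no genuine obstacle here: the whole argument is a direct specialisation of the factor set axioms once the additive part has been normalised to $1$. The only point to be alert to is that one must apply the hypothesis $y+z=y\circ z$ only at the very last step, so that \eqref{eq:rightlin} can be invoked to split $f(x,y\circ z)$ inside the multiplicative cocycle identity \eqref{eq:FS4}.
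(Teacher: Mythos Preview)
Your argument is correct and follows essentially the same route as the paper: normalise to a representative $(1,f)$ by dividing out the coboundary $(\partial_+h,\partial_\circ h)$, read off \eqref{eq:rightlin} from \eqref{eq:FS5} with $\alpha\equiv 1$, and then obtain \eqref{eq:leftlin} from \eqref{eq:FS4} under the hypothesis $y+z=y\circ z$. Your write-up is in fact more explicit than the paper's, since you spell out that \eqref{eq:rightlin} is needed inside the \eqref{eq:FS4} step to split $f(x,y\circ z)$, a detail the paper leaves implicit.
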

\begin{proof}
The elements of $S_+(\cD)$ are the equivalence classes of 
factor sets $(\partial_+h,\mu)$ for some $h\colon Q\to K$.
Note that every $(\partial_+h,\mu)\in\ker(\delta_+)$ 
is cohomologous to $(1,\partial_{\circ}h^{-1}\mu)$.
    Equality \eqref{eq:rightlin} is a reformulation of \eqref{eq:FS5}.
Equality \eqref{eq:leftlin} is a direct consequence of 
\eqref{eq:FS4} in the case $y+z=y\circ z$.
\end{proof}


\begin{lem}
\label{lem:section}
    Let $G$ be a trivial skew brace, $K$ an abelian group, and $\cD=(G,K)$. The sequence $\sigma_+(\cD,\iota,\delta)$ splits.
\end{lem}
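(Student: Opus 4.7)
The strategy is to exhibit an explicit section $s\colon H^2(\cD_+)\to H^2_b(\cD)$ of $\delta_+$; producing such a section simultaneously establishes surjectivity of $\delta_+$ and splits the sequence $\sigma_+(\cD,\iota,\delta)$. Since $G$ is a trivial skew brace, the additive and multiplicative operations coincide, so $\lambda_x=\id_G$ for every $x\in G$ and moreover $x\circ y=x+y$. The natural candidate is to define $s$ on representatives by $\alpha\mapsto (\alpha,\alpha)$ and verify that this descends to cohomology.

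The main step is to check that $(\alpha,\alpha)\in Z^2_b(\cD)$ whenever $\alpha\in Z^2(G_+,K)$. Normalization \eqref{eq:FS1}--\eqref{eq:FS2} is immediate from the normalization of $\alpha$. Since $+=\circ$, conditions \eqref{eq:FS3} and \eqref{eq:FS4} are identical and both amount to the 2-cocycle identity for $\alpha$. The only genuinely new condition is the compatibility \eqref{eq:FS5}. Here I would pass to the equivalent formulation \eqref{eq:equivFS5} and substitute $\mu=\alpha$, $\lambda_x(z)=z$, $x\circ y=x+y$; after cancelling $\alpha(x,z)$, this reduces precisely to
\[
\alpha(y,z)\alpha(x,y+z)=\alpha(x,y)\alpha(x+y,z),
\]
which is the 2-cocycle condition for $\alpha$ and therefore holds automatically.

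Well-definedness of $s$ at the level of cohomology is then immediate: if $\alpha=\partial_+ h$ for some $h\colon G\to K$ with $h(0)=1$, then $\partial_+ h=\partial_\circ h$ because the two operations coincide on $G$, so $(\alpha,\alpha)=(\partial_+ h,\partial_\circ h)\in B^2_b(\cD)$. The map $s$ is a group homomorphism since sums of factor sets are defined componentwise, and $\delta_+\circ s=\id_{H^2(\cD_+)}$ holds by construction. Thus $s$ is the desired splitting, and the short exactness of $\sigma_+(\cD,\iota,\delta)$ at $H^2(\cD_+)$ falls out as a byproduct.

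The only slightly delicate point is the verification of \eqref{eq:FS5}, which could look intimidating if written out directly in the non-abelian group $G$; however, once one routes through \eqref{eq:equivFS5} and exploits $\lambda_x=\id$, it collapses to a single cocycle identity. I therefore do not expect any serious obstacle.
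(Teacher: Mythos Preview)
Your proposal is correct and follows essentially the same approach as the paper: both define the section by $\alpha\mapsto\overline{(\alpha,\alpha)}$ and verify the compatibility condition reduces to the ordinary group $2$-cocycle identity. The only cosmetic difference is that you route the verification through the equivalent form \eqref{eq:equivFS5}, whereas the paper manipulates \eqref{eq:FS5} directly to obtain the symmetric form $\alpha(x,y)\alpha(x,y+z)^{-1}\alpha(x,z)=\mu(x,y)\mu(x,y+z)^{-1}\mu(x,z)$; both computations amount to the same cocycle rearrangement.
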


\begin{proof}
Consider the group homomorphism 
$Z^2(\cD_+)\to H^2_{b}(\cD)$, 
$f\mapsto\overline{(f,f)}$,
where $Z^2(\cD)$ is the group of group 2-cocycles. 
We need to verify that this map is well-defined. 
We claim that in the case of trivial skew braces, \eqref{eq:FS5} becomes
\begin{equation}
\label{eq:trivial_FS5}
\begin{aligned}
\alpha(x,y)\alpha(x,y+z)^{-1}\alpha(x,z)=\mu(x,y)\mu(x,y+z)^{-1}\mu(x,z).
\end{aligned}
\end{equation}
In fact, Equality \eqref{eq:FS3} yields 
\begin{align*}
\alpha(x+y,-x)^{-1}\alpha(x+y-x,x+z)^{-1}&=\alpha(-x,x+z)^{-1}\alpha(x+y,z)^{-1},\\
\alpha(y,z)\alpha(x+y,z)^{-1}&=\alpha(x,y)\alpha(x,y+z)^{-1},\\
\alpha(-x,x)\alpha(-x,x+z)^{-1}&=\alpha(x,z). 
\end{align*}
A straightforward calculation with 
these formulas and Equality \eqref{eq:FS5} 
yields the claim.
It is straightforward that our map is trivial on the coboundaries.
Thus it factorizes through a map $s\colon H^2(\cD_+)\to H^2_{b}(\cD)$.
\end{proof}

\begin{thm}
\label{thm:M(Gop)}
    Let $\cD=(G,K)$ be a datum, where $G$ is a trivial skew brace. 
    Then $H_{b}^2(\cD)\simeq H^2(\cD)\times \Hom(G,\Hom(G,K))$.
\end{thm}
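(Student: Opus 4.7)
The plan is to combine the splitting of $\sigma_+(\cD)$ from Lemma \ref{lem:section} with an explicit description of $S^2_+(\cD)$. Since that sequence splits, $H^2_b(\cD) \simeq S^2_+(\cD) \oplus H^2(\cD_+)$, and for a trivial skew brace $G$ we have $\cD_+ = \cD_\circ$, so $H^2(\cD_+) = H^2(G, K) = H^2(\cD)$. The task then reduces to showing $S^2_+(\cD) \simeq \Hom(G, \Hom(G, K))$.

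For the identification, I would apply Proposition \ref{pro:bilinearfactorset}. Every class in $S^2_+(\cD)$ admits a representative $(1, f)$ with $f(x, y + z) = f(x, y) f(x, z)$. Since $G$ is a trivial skew brace, $y + z = y \circ z$ for all $y, z \in G$, so equation \eqref{eq:leftlin} of the same proposition also applies and yields $f(x + y, z) = f(x, z) f(y, z)$. Hence $f$ is biadditive and corresponds to a unique element of $\Hom(G, \Hom(G, K))$. Conversely, a short check using bilinearity together with the identity $+ = \circ$ verifies that $(1, f)$ satisfies \eqref{eq:FS1}--\eqref{eq:FS5} whenever $f$ is biadditive, so the assignment $f \mapsto \overline{(1, f)}$ lands in $S^2_+(\cD)$ and is surjective by the first sentence of this paragraph.

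Finally, I would check that $f \mapsto \overline{(1, f)}$ is injective. If $(1, f) = (\partial_+ h, \partial_\circ h)$ for some $h \colon G \to K$ with $h(0) = 1$, then $\partial_+ h = 1$ forces $h$ to be a group homomorphism $G \to K$; since the additive and circle operations coincide on the trivial skew brace $G$, we then have $\partial_\circ h = \partial_+ h = 1$, so $f = 1$. This produces the isomorphism $\Hom(G, \Hom(G, K)) \simeq S^2_+(\cD)$, and combining with the splitting of Lemma \ref{lem:section} gives the formula. The only step requiring genuine care is the surjectivity onto $S^2_+(\cD)$, which hinges on Proposition \ref{pro:bilinearfactorset}; everything else collapses because the two coboundary operators $\partial_+$ and $\partial_\circ$ are literally equal for a trivial skew brace.
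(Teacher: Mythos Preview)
Your proof is correct and follows essentially the same approach as the paper: both invoke Lemma~\ref{lem:section} to split off $H^2(\cD_+)$ and then identify $S^2_+(\cD)$ with $\Hom(G,\Hom(G,K))$ via Proposition~\ref{pro:bilinearfactorset}, using the observation that $\partial_+ = \partial_\circ$ on a trivial skew brace to establish injectivity. The paper's presentation of injectivity compares two representatives $(1,f_1)$ and $(1,f_2)$ rather than a single $(1,f)$ against the trivial class, but the content is identical.
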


\begin{proof}
By Lemma \ref{lem:section} it is enough to show that $S_+^2(\cD)\simeq \Hom(G,\Hom(G,K))$. 
By Proposition \ref{pro:bilinearfactorset}, the elements of $S_+^2(\cD)$ are of the form 
$\overline{(1,f)}$ for some $f$ with 
\begin{equation}
\label{eq:f_bimorphism}
f(x,y+z)=f(x,y)f(x,z)\quad \text{and} \quad f(x+y, z)= f(x,z)f(y,z).
\end{equation}
This implies that $f\in \Hom(G,\Hom(G,K))$.

We claim that the map 
\[
\Hom(G,\Hom(G,K))\to S_+(\cD),\quad 
f\mapsto \overline{(1,f)},
\]
is an isomorphism. 

The map is a well-defined surjective group homomorphism. It remains to prove 
that the map is injective, so we need to show that $\overline{(1,f)}\in S_+(\cD)$ 
is uniquely determined by $f$. That is, two factor sets $(1,f_1)$ and $(1,f_2)$ are cohomologous 
if and only if $f_1=f_2$. Assume $(1,f_1)$ and $(1,f_2)$ are cohomologous. 
Then there exists a map $h\colon G\to K$ with $h(0)=1$ such that $(1,f_1)=(\partial_+h,\partial_{\circ}hf_2)$. Since $\partial_+ h =1$, it follows that $\partial_{\circ} h=1$. 
\end{proof}



\begin{thm}
\label{thm:G=Gop}
Let $G$ be a group. Then $H_b^2(G,K)\simeq H_b^2(G^{\op},K)$.
\end{thm}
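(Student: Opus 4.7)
The plan is essentially a one-step transport of structure: exploit the classical fact that any group is isomorphic to its opposite via inversion, and apply functoriality of $H^2_b$ in its first variable.

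Concretely, under the paper's convention, both $G$ and $G^{\op}$ are viewed as trivial skew braces. The map $\varphi\colon G\to G^{\op}$, $g\mapsto g^{-1}$, is a group isomorphism, since $(gh)^{-1} = h^{-1}g^{-1}$ coincides with the product of $g^{-1}$ and $h^{-1}$ taken in $G^{\op}$. Consequently, $\varphi$ is also an isomorphism of the corresponding trivial skew braces. Pullback of factor sets along $\varphi$, defined by $(\alpha,\mu)\mapsto (\alpha\circ(\varphi\times\varphi),\mu\circ(\varphi\times\varphi))$, is a bijection $Z^2_b(G^{\op},K)\to Z^2_b(G,K)$; a direct check against Definition \ref{defn:FS} shows that each of the conditions \eqref{eq:FS1}--\eqref{eq:FS5} is preserved because $\varphi$ respects both brace operations. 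Moreover, this bijection sends coboundaries to coboundaries (precompose the cochain $h$ with $\varphi$), so it descends to the desired isomorphism $H^2_b(G^{\op},K)\simeq H^2_b(G,K)$.

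As an independent sanity check, one can invoke Theorem \ref{thm:M(Gop)} to decompose
\[
H^2_b(G, K)\simeq H^2(G, K)\times\Hom(G,\Hom(G,K))
\]
and analogously for $G^{\op}$, and verify that each factor is isomorphic under $\varphi$: we have $H^2(G,K)\simeq H^2(G^{\op},K)$ by functoriality of group cohomology, and $\Hom(G,\Hom(G,K))\simeq \Hom(G^{\op},\Hom(G^{\op},K))$ by functoriality of $\Hom$ (or, equivalently, because these Hom groups depend only on the abelianization, which is the same for $G$ and $G^{\op}$). There is no significant obstacle here; the entire content is the group-level observation that inversion identifies $G$ with $G^{\op}$, after which everything is formal.
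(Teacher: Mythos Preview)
Your argument rests on a misreading of the notation $G^{\op}$. In this paper $G^{\op}$ does \emph{not} denote the trivial skew brace on the opposite group; it denotes the \emph{almost trivial} skew brace on $G$, namely the structure with $x+y=xy$ (the same additive group as $G$) and $x\circ y=yx$. This is clear from the subsection title ``Cohomology of trivial and almost trivial skew braces'' and from the paper's own proof, which sends $(\alpha,\mu)\mapsto\overline{(\alpha,\mu\tau)}$ leaving $\alpha$ untouched---something that only makes sense if the additive group is unchanged.

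With the correct reading, your inversion map $\varphi(g)=g^{-1}$ is \emph{not} a skew brace homomorphism. On the additive side both $G$ and $G^{\op}$ have $x+y=xy$, so you would need $\varphi(gh)=\varphi(g)\varphi(h)$, i.e.\ $h^{-1}g^{-1}=g^{-1}h^{-1}$, which fails whenever $G$ is non-abelian. In fact there is \emph{no} skew brace isomorphism between the trivial and almost trivial braces on a non-abelian $G$: any such map $f$ would have to satisfy both $f(xy)=f(x)f(y)$ and $f(xy)=f(y)f(x)$, forcing its image to be abelian. So the transport-of-structure strategy cannot work here, and the content of the theorem is genuinely non-trivial.

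Your sanity check is also circular: Theorem~\ref{thm:M(Gop)} is stated only for trivial skew braces, so you cannot apply it directly to the almost trivial brace $G^{\op}$; the decomposition of $H^2_b(G^{\op},K)$ is a \emph{consequence} of Theorem~\ref{thm:G=Gop}, not an independent route to it. The paper's actual proof works at the cocycle level: it rewrites the compatibility condition \eqref{eq:FS5} for the datum $(G^{\op},K)$ and observes that the explicit map $(\alpha,\mu)\mapsto(\alpha,\mu\tau)$ carries $Z^2_b(G,K)$ to $Z^2_b(G^{\op},K)$ and descends to cohomology.
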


\begin{proof}
We claim that with datum $(G^{\op},K)$, Equality~\eqref{eq:FS5} becomes
\[
\alpha(y,x)\mu(x,y)^{-1}\alpha(y,z+x)^{-1}\mu(z+x,y)\alpha(y,z)\mu(z,y)^{-1}=1.
\]
We have
\begin{gather}
    \label{eq:FS5_op1}
    \alpha(x,-x)\alpha(y+x,-x)^{-1}=\alpha(y,x),\\
    \label{eq:FS5_op2}
    \mu(x,z\circ y)\mu(x,z)^{-1}
    =\mu(z,y)^{-1}\mu(z+x,y).
\end{gather}
By using \eqref{eq:FS5_op1} and \eqref{eq:FS5_op2} 
on \eqref{eq:FS5} we obtain the claim.
Let 
\[
\psi\colon Z^2_{b}(G,K)\to H^2_{b}(G^{\op},K),\quad 
(\alpha,\mu)\mapsto \overline{(\alpha,\mu\tau)},
\]
where $\tau$ denotes the flip map.  
The map $\psi$ is well-defined. Moreover, $\psi$  
factors through a homomorphism $\phi\colon H_b^2(G,K)\to H_b^2(G^{op},K)$. Similar
arguments prove 
that the map 
$H^2_{b}(G^{\op},K)\to H^2_{b}(G,K)$, 
$\overline{(\alpha,\mu)}\mapsto \overline{(\alpha,\mu\tau)}$, 
is well-defined and it is the inverse of $\psi$. 
\end{proof}

\subsection{Hoschild--Serre exact sequence for skew braces}

\begin{notation}
    Let $K_1,\dots ,K_n$ be abelian groups and $Q$ be a skew brace. For 
    $i\in\{1,\dots,n\}$ let $g_i\colon Q\to K_i$ be maps. We write 
    \[
    (g_1,\dots,g_n)\colon Q\to K_1\times\cdots\times K_n,
    \quad
    (g_1,\dots,g_n)(x)=(g_1(x),\dots,g_n(x)).
    \]
\end{notation}

\begin{pro}
    Let $K_1,\dots ,K_n$ be finite abelian groups and $Q$ be a finite skew brace.
    The map 
    \begin{align*}
    \theta\colon \prod_{i=1}^n H^2_{b}\left(Q,K_i\right)&\to H^2_{b}\left(Q,K_1\times\cdots\times K_n\right),\\
    \left(\overline{(\alpha_1,\mu_1)},\dots,\overline{(\alpha_n,\mu_n)}\right)&\mapsto\overline{\left((\alpha_1,\dots,\alpha_n),(\mu_1,\dots,\mu_n)\right)},
    \end{align*}
    is a group isomorphism. 
\end{pro}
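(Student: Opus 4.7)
The plan is to verify that $\theta$ is an isomorphism by checking every ingredient coordinate-wise in $K_1\times\cdots\times K_n$. The point is that each of the defining identities \eqref{eq:FS1}--\eqref{eq:FS5} of a factor set, as well as the coboundary condition, is an equation taking values in an abelian coefficient group, so it holds in a product if and only if it holds in each factor; there is no cross-talk between coordinates.

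First I would show well-definedness on the level of cocycles. Given $(\alpha_i,\mu_i)\in Z^2_b(Q,K_i)$ for $i=1,\dots,n$, set $\alpha=(\alpha_1,\dots,\alpha_n)$ and $\mu=(\mu_1,\dots,\mu_n)$. Each identity in Definition \ref{defn:FS}, including the compatibility condition \eqref{eq:FS5}, is an equality between two products in $K_1\times\cdots\times K_n$, and such an equality holds if and only if it holds in each factor $K_i$. This gives $(\alpha,\mu)\in Z^2_b(Q,K_1\times\cdots\times K_n)$. Since the group law on the $Z^2_b$'s is pointwise multiplication, the resulting assignment on $\prod_i Z^2_b(Q,K_i)$ is a group homomorphism. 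Moreover, if $(\alpha_i,\mu_i)=(\partial_+ h_i,\partial_\circ h_i)$ is a coboundary for every $i$, then the map $h=(h_1,\dots,h_n)\colon Q\to K_1\times\cdots\times K_n$ with $h(0)=1$ satisfies $(\alpha,\mu)=(\partial_+ h,\partial_\circ h)$, so coboundaries are sent to coboundaries. Thus $\theta$ descends to a well-defined group homomorphism on cohomology.

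For injectivity, suppose $\theta\bigl((\overline{(\alpha_i,\mu_i)})_{i}\bigr)$ is trivial. Then there exists $h\colon Q\to K_1\times\cdots\times K_n$ with $h(0)=1$ such that $((\alpha_1,\dots,\alpha_n),(\mu_1,\dots,\mu_n))=(\partial_+ h,\partial_\circ h)$. Writing $h=(h_1,\dots,h_n)$ coordinate-wise, projection to the $i$-th factor yields $(\alpha_i,\mu_i)=(\partial_+ h_i,\partial_\circ h_i)$, so each input class vanishes. For surjectivity, take any factor set $(\alpha,\mu)\in Z^2_b(Q,K_1\times\cdots\times K_n)$, let $\alpha_i,\mu_i$ be the $i$-th components of $\alpha,\mu$, and observe by the well-definedness argument in reverse that $(\alpha_i,\mu_i)\in Z^2_b(Q,K_i)$; then $\theta$ maps the tuple of classes $\overline{(\alpha_i,\mu_i)}$ to $\overline{(\alpha,\mu)}$.

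There is no genuine obstacle: the statement is a formal manifestation of the fact that any cohomology built from cocycles with values in an abelian group commutes with finite products in the coefficients. The only mildly tedious verification is that \eqref{eq:FS5} truly decomposes along coordinates, but this is automatic since both sides of \eqref{eq:FS5} are products in the coefficient group and equality of tuples is equality of entries. In particular, the finiteness hypotheses on $Q$ and on the $K_i$ are not used in the argument.
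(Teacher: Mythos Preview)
Your argument is correct and is essentially the same as the paper's: both proofs check the cocycle and coboundary identities coordinate-wise, and both use the coordinate projections $p_i\colon K\to K_i$ to go in the reverse direction. The only cosmetic difference is that the paper packages the projections into an explicit inverse map $\beta$ rather than phrasing it as separate injectivity and surjectivity checks; your observation that finiteness is not used is also accurate.
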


\begin{proof}
Let $K=K_1\times\cdots\times K_n$ and 
\begin{align*}
\gamma\colon\prod_{i=1}^n Z^2_b(Q,K_i)&\to H^2_{b}(Q,K),\\
\left((\alpha_1,\mu_1),\dots,(\alpha_n,\mu_n)\right)&\mapsto\overline{\left((\alpha_1,\dots,\alpha_n),(\mu_1,\dots,\mu_n)\right)}.
\end{align*}
Let 
\[
\left((\alpha_1,\mu_1),\dots,(\alpha_n,\mu_n)\right)\in \prod_{i=1}^n Z^2_b(Q,K_i).
\]
A straightforward computation shows that 
$(\alpha_1,\dots,\alpha_n)$ and $(\mu_1,\dots,\mu_n)$ satisfy \eqref{eq:FS1}--\eqref{eq:FS5}. Thus $\gamma$ is well-defined. Moreover, 
$\gamma$ is a group homomorphism. 
Let 
\[
f=\left(\left(\partial_+h_1,\partial_{\circ} h_1\right),\dots , \left(\partial_+h_n,\partial_{\circ}h_n\right)\right)\in \prod_{i=1}^n B_b(Q,K_i)
\]
and $h\colon Q\to K$, $h(x)=(h_1(x),\dots,h_n(x))$. Since 
\begin{align*}
    \gamma\left(f\right)&=\overline{\left((\partial_+h_1,\dots,\partial_+h_n),(\partial_{\circ}h_1,\dots,\partial_{\circ}h_n)\right)}
    =\overline{(\partial_+h,\partial_\circ h)}=1,
\end{align*}
$\gamma$ factors through $\theta$. It is left to show that $\theta$ is bijective. 
Let 
\[
\beta\colon Z^2_b(Q,K)\to \prod_{i=1}^n H^2_{b}(Q,K_i),
\quad
(\alpha,\mu)\mapsto \left(\overline{\left(p_1\alpha,p_1\mu\right)},\dots,\overline{\left(p_n\alpha,p_n\mu\right)}\right),
\]
where each $p_i\colon K\rightarrow K_i$ denotes the $i$-th projection. 
As we did before, routine calculations show that $\beta$ is well-defined, factors through 
a map 
\[H^2_b(Q,K)\to \prod_{i=1}^n H^2_{b}(Q,K_i)
\]
and that it is the inverse of $\theta$. 
\end{proof}
Let 
\begin{equation}
\label{eq:ann_extension}
\begin{tikzcd}
0\arrow[r] & K \arrow[r,"\iota"] & B\arrow[r,"\pi"] & Q\arrow[r] & 0
\end{tikzcd}
\end{equation}
be an annihilator extension of skew braces and let $A$ be an abelian group. 
A skew brace homomorphism $f\in\Hom(B,A)$ induces a natural group homomorphism 
$g\in\Hom(K,A)$ by restricting $f$ to $K$. The assignment $f\mapsto g$ yields a group homomorphism 
\[
\Res\colon \Hom(B,A)\to\Hom(K,A).
\]
We call the map $\Res$ the \emph{restriction homomorphism}. 

Let $(\alpha,\mu)$ be a factor set of the extension \eqref{eq:ann_extension}. For every group homomorphism $\phi\in\Hom(K,A)$ it follows that 
$(\phi\alpha,\phi\mu)\in Z^2_b(Q,A)$. Routine calculations show that the assignment $\phi\mapsto\overline{(\phi\alpha,\phi\mu)}$ 
does not depend on $(\alpha,\mu)$ and then 
it gives a group homomorphism 
\[
\Tra\colon\Hom(K,A)\to H_{b}^2(Q,A).
\]
We call the map $\Tra$ the \emph{transgression homomorphism}. 

The map $\Inf\colon\Hom(Q,A)\to\Hom(B,A)$, $\psi\mapsto\psi\pi$, is a group homomorphism. 
Routine calculations show that the map
\[
\Inf\colon H_{b}^2(Q,A)\to H_{b}^2(B,A),\quad
\overline{(\alpha_1,\mu)}\mapsto\overline{(\alpha_1(\pi\times\pi),\mu_1(\pi\times\pi))},
\]
is a well-defined group homomorphism. We call these maps $\Inf$ the \emph{inflation homomorphisms}. 

\begin{thm}
\label{thm:HoschildSerre}
Let 
\begin{equation}
\label{eq:HS_myextension}
\begin{tikzcd}
0\arrow[r] & K \arrow[r,"\iota"] & B\arrow[r,"\pi"] & Q\arrow[r] & 0
\end{tikzcd}
\end{equation}
be an annihilator extension of skew braces. Let $A$ be an abelian group. Then
\[
\begin{tikzcd}
	0 & {\Hom(Q,A)} & {\Hom(B,A)} & {\Hom(K,A)} \\
	& {H^2_b(Q,A)} & {H_b^2(B,A)} & {}
	\arrow[from=1-1, to=1-2]
	\arrow["\Inf", from=1-2, to=1-3]
	\arrow["\Res", from=1-3, to=1-4]
	\arrow["\Inf"', from=2-2, to=2-3]
	\arrow["\Tra"{description}, curve={height=6pt}, from=1-4, to=2-2]
\end{tikzcd}
\]
is an exact sequence. 
\end{thm}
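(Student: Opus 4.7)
The plan is to verify exactness at each of the four positions of the sequence in turn, imitating the classical Hochschild--Serre argument but tracking the additive and multiplicative components of the skew brace 2-cocycle in parallel. Throughout I would fix a transversal $t\colon Q\to B$ of $\pi$ with $t(0)=1$ together with its factor set $(\alpha,\mu)\in Z^2_b(Q,K)$, so that
\[
t(x)+t(y)=\iota(\alpha(x,y))+t(x+y),\qquad t(x)\circ t(y)=\iota(\mu(x,y))+t(x\circ y),
\]
and every $b\in B$ decomposes uniquely as $\iota(k)+t(\pi(b))$ by Remark~\ref{rem:decomposition}. The hypothesis $\iota(K)\subseteq\Ann(B)$ will be used repeatedly to commute $\iota$-terms past elements of $t(Q)$ with respect to either operation.

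Exactness at $\Hom(Q,A)$ and $\Hom(B,A)$ is formal: $\Inf$ is injective because $\pi$ is surjective, and any $f\in\ker(\Res)$ vanishes on the ideal $\iota(K)$ and so descends to a skew brace homomorphism $Q\simeq B/\iota(K)\to A$ whose inflation is $f$.

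For exactness at $\Hom(K,A)$, the vanishing $\Tra\circ\Res=0$ is obtained by setting $h=f\circ t$ for $f\in\Hom(B,A)$ with $\phi=f\iota$: applying $f$ to the two cocycle relations above immediately gives $\phi\alpha=\partial_+h$ and $\phi\mu=\partial_\circ h$. Conversely, if $(\phi\alpha,\phi\mu)=(\partial_+h,\partial_\circ h)$ for some $h\colon Q\to A$ with $h(0)=1$, I would define $f\colon B\to A$ on the unique decomposition by $f(\iota(k)+t(x))=\phi(k)h(x)$; writing $b_1+b_2$ and $b_1\circ b_2$ in normal form using $\iota(K)\subseteq\Ann(B)$, the two hypotheses on $(\phi\alpha,\phi\mu)$ are exactly what is needed for $f$ to respect $+$ and $\circ$.

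Finally, for exactness at $H^2_b(Q,A)$, the inclusion $\Img(\Tra)\subseteq\Ker(\Inf)$ comes from defining $h\colon B\to A$, $h(\iota(k)+t(x))=\phi(k)^{-1}$, for a given $\phi\colon K\to A$, and computing $\partial_+h=(\phi\alpha)(\pi\times\pi)$ and $\partial_\circ h=(\phi\mu)(\pi\times\pi)$ directly. For the converse, given $\overline{(\alpha_1,\mu_1)}\in\Ker(\Inf)$ I would pick $h\colon B\to A$ with $h(0)=1$ realising $\alpha_1(\pi\times\pi)=\partial_+h$ and $\mu_1(\pi\times\pi)=\partial_\circ h$; the normalisation of $\alpha_1$ forces $\phi:=h\iota\colon K\to A$ to be a group homomorphism, and the claim is that $h\circ t\colon Q\to A$ is the coboundary witnessing $\Tra(\phi^{-1})=\overline{(\alpha_1,\mu_1)}$. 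This last step is the main obstacle: the \emph{same} function $h\circ t$ must simultaneously correct both cocycle components. The mechanism is that, starting from $\partial_+h(t(x),t(y))=\alpha_1(x,y)$ and rewriting $h(t(x)+t(y))=h(\iota(\alpha(x,y))+t(x+y))$ using $\iota(K)\subseteq\Ann(B)$ together with $\alpha_1(0,x+y)=1$ yields the additive match, and the entirely parallel manipulation with $\circ$ in place of $+$ yields the multiplicative match; because both manipulations rest on the single map $h$ and on the cocycle relations built into $(\alpha,\mu)$, the two corrections are furnished by one and the same function $h\circ t$.
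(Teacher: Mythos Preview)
Your proposal is correct and follows essentially the same route as the paper: both arguments fix a transversal, prove the first three exactness statements by the obvious bookkeeping, and for $\Ker(\Inf)\subseteq\Img(\Tra)$ both extract $\phi=h\iota$ from the coboundary witness $h$ and use the normalisation $\alpha_1(0,-)=1$ to split $h(\iota(k)+t(x))=\phi(k)\,h(t(x))$, so that $h\circ t$ serves as the single correcting map for both components. Your description of the witness for $\Inf\circ\Tra=0$ as $h(\iota(k)+t(x))=\phi(k)^{-1}$ is literally the same map the paper writes as $h(b)=\phi(t(\pi(b))-b)$, just in transversal coordinates.
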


\begin{proof}
We identify $K$ with $\iota(K)$. 
The map $\pi$ is surjective, so the 
sequence is exact at $\Hom(Q,A)$. 

Clearly, the sequence is exact at $\Hom(B,A)$. 

We now prove that the sequence is exact at $\Hom(K,A)$. 
Let $(\alpha,\mu)$ be a factor set of \eqref{eq:HS_myextension} and let $t$ be a transversal of \eqref{eq:HS_myextension}. We identify 
$B=K\times_{(\alpha,\mu)}Q$ (see Theorem \ref{thm:bijection}). 
Let $g\in\Hom(B,A)$. Then 
\[
\Tra(\Res(g))=\overline{\left(\Res(g)\alpha,\Res(g)\mu\right)}.
\]
Since both 
\begin{align*}
&\Res(g)\alpha(x,y) = g\left(t(x)+t(y)-t(x+y)\right)=g\left(t(y)\right)g\left(t(x+y)\right)^{-1}g\left(t(x)\right),\\
&\Res(g)\mu(x,y) = g\left(t\left(x\right)\circ t\left(y\right)\circ t\left(x\circ y\right)'\right)=g\left(t\left(y\right)\right)g\left(t\left(x\circ y\right)\right)^{-1}g\left(t\left(x\right)\right), 
\end{align*}
hold for all $x,y\in Q$, 
the pair $\left(\Res(g)\alpha,\Res(g)\mu\right)$ is a coboundary and hence 
$\Tra\Res$ is trivial. 

Now let $g\in\Ker(\Tra)$. Our goal is to extend $g$ to $\Hom(B,A)$. 
By hypothesis, there exists a map $h\colon Q\to K$ such that
\[
g\left(\alpha\left(x,y\right)\right)=h\left(y\right)h\left(x+y\right)^{-1}h\left(x\right),
\quad
g\left(\mu\left(x,y\right)\right)=h\left(y\right)h\left(x\circ y\right)^{-1}h\left(x\right),
\]
for all $x,y\in Q$. 
For $k\in K$ and $x\in Q$, we define 
\[
g'\colon B\to A,
\quad
k+t\left(x\right)\mapsto g\left(k\right)h\left(x\right).
\]
The map is well defined by Remark \ref{rem:decomposition}. 

Now $g'$ is a skew brace homomorphism, as 
\begin{align*}
g'\left(a+t\left(x\right)+b+t\left(y\right)\right)&=g'\left(a + b + \alpha\left(x,y\right) + t\left(x+y\right)\right)\\
&= g\left(a\right) h\left(x\right) g\left(b\right)h\left(y\right)
\end{align*}
and similarly 
\[
g'\left(a\circ t\left(x\right)\circ b\circ t\left(y\right)\right)
=g\left(a\right) h\left(x\right) g\left(b\right)h\left(y\right).
\]
In addition, $g'$ restricts to $g$ on $K$. 
Therefore, $\Ker(\Tra) =\Img\Res$. 

It is left to check that the sequence is exact at $H_{b}^2(Q,A)$. First we show that $\Inf\Tra$ is trivial. 
Let $\phi\in\Hom(K,A)$.  
Write
\[
\Inf\Tra(\phi)=\overline{(\phi\alpha',\phi\mu')},
\]
where the maps $\alpha',\mu'\colon B\times B\to K$ are such that $\alpha'(x,y)=\alpha(\pi(x),\pi(y))$ 
and $\mu'(x,y)=\mu(\pi(x),\pi(y))$. 
Let $h\colon B\to A$, $x\mapsto \phi\left( t\left(\pi\left(x\right)\right)-x\right)$. One checks that $t(\pi(x))-x\in\ker\pi$, so 
$h$ is well-defined. Notice that 
\begin{equation}
    \label{eq:trick_exact}
    \begin{aligned}
        t\left(\pi\left(x\right)\right)\circ x'= & t\left(\pi\left(x\right)\right)\circ x'+ x-x\\
        =& t\left(\pi\left(x\right)\right)\circ x'\circ x-x
        =  t\left(\pi\left(x\right)\right)-x.
    \end{aligned}
\end{equation}
Then
\begin{align*}
    \phi\alpha'\left(x,y\right) & = \phi\left(\alpha\left(\pi\left(x\right),\pi\left(y\right)\right)\right)\\
    &= \phi\left(t\left(\pi\left(x\right)\right)+t\left(\pi\left(y\right)\right) -t\left(\pi\left(x+y\right)\right)\right)\\
    &= \phi\left(t\left(\pi\left(x\right)\right) -x+t\left(\pi\left(y\right)\right)-y -\left(t\left(\pi\left(x+y\right)\right)-y-x\right)\right)\\
    &= \phi\left(t\left(\pi\left(x\right)\right) -x\right)\phi\left(t\left(\pi\left(y\right)\right)-y\right) \phi\left(t\left(\pi\left(x+y\right)\right)-y-x\right)^{-1}\\
    &= \partial_+h\left(x,y\right).
\end{align*}
Similarly, by \eqref{eq:trick_exact}, 
$\phi\mu'(x,y)=\partial_{\circ}h\left(x,y\right)$.
Therefore $\Inf\Tra$ is trivial. 

Finally, assume that $\overline{(\alpha_1,\mu_1)}\in \Ker(\Inf)$. Then there exists a map $h\colon B\to A$ such that for all $a,b\in K$ and $x,y\in Q$,
\begin{equation}
\label{eq:KerInfImTra}
\begin{aligned}
\alpha_1(x,y)&= h(b+t(y))h(a+b+\alpha(x,y)+t(x+y))^{-1}h(a+t(x)),\\
    \mu_1(x,y)&=h(b+t(y))h(a+b+\mu(x,y)+t(x\circ y))^{-1}h(a+t(x)).
\end{aligned}
\end{equation}
Consider the maps $\phi\colon K\to A$, $k\to h(k)$, and $p\colon Q\to A$, $x\to h(t(x))$. By setting $x=y=0$ (resp. $y=0$ and $a=0$) in \eqref{eq:KerInfImTra} we see that $\phi$ is in fact a group homomorphism (resp. $h(b+t(x))=\phi(b)p(x)$). Hence, setting $a=b=0$ in \eqref{eq:KerInfImTra}, we obtain that 
\begin{gather*}
\alpha_1(x,y)=p(y)p(x+ y)^{-1}p(x)\phi(\alpha(x,y))^{-1},\\
\mu_1(x,y)=p(y)p(x+ y)^{-1}p(x)\phi(\mu(x,y))^{-1}.
\end{gather*}
This means that $\overline{(\alpha_1,\mu_1)}\in \Img(\Tra)$.
\end{proof}

\section{The Schur multiplier}
\label{Schur}

\begin{defn}
\label{defn:SchurMultiplier}
    The \emph{Schur multiplier} of a finite skew brace $Q$ is 
    defined as the group $M_b(Q)=H^2_b(Q,\C^{\times})$.
\end{defn}

The Schur multiplier of a finite skew 
brace is always an abelian group.

\begin{thm}
\label{thm:M(Q)exp}
    If $Q$ is a finite skew brace, 
    then $\exp(M_b(Q))$ divides $|Q|^2$.
\end{thm}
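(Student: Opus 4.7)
Set $n = |Q|$ and fix a factor set $(\alpha,\mu) \in Z^2_b(Q,\mathbb{C}^\times)$. The plan is to show that $(\alpha,\mu)^{n^2}$ is a coboundary; since this holds for every factor set, we then get $\exp(M_b(Q)) \mid n^2 = |Q|^2$. The strategy has two moves: first, peel off the additive component of $(\alpha,\mu)^n$ using the classical Schur trick on $Q_+$; then use the brace compatibility \eqref{eq:equivFS5} to force the surviving ``multiplicative'' component to be additive in one variable, and kill it by Lagrange in $Q_+$.

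\textbf{Step 1 (reduce to a pair of the form $(1,\nu)$).} Define $h\colon Q \to \mathbb{C}^\times$ by $h(x) = \prod_{y \in Q}\alpha(x,y)$; note $h(0) = 1$ by \eqref{eq:FS1}. Multiplying the additive cocycle identity \eqref{eq:FS3} over all $z \in Q$ and using the substitution $w = y+z$ (a bijection of $Q_+$) to recognize $\prod_z \alpha(x,y+z) = h(x)$, one obtains
\[
\alpha(x,y)^n = h(y)\,h(x+y)^{-1}\,h(x) = \partial_+ h(x,y).
\]
Thus $(\partial_+ h,\partial_\circ h)$ is a coboundary of the datum $(Q,\mathbb{C}^\times)$, and
\[
(\alpha^n,\mu^n)\cdot(\partial_+ h,\partial_\circ h)^{-1} = (1,\nu), \qquad \nu := \mu^n (\partial_\circ h)^{-1},
\]
is a factor set cohomologous to $(\alpha,\mu)^n$.

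\textbf{Step 2 (compatibility plus Lagrange).} Because $(1,\nu)$ is a factor set of $(Q,\mathbb{C}^\times)$, specializing \eqref{eq:equivFS5} to $\alpha \equiv 1$ collapses it to
\[
\nu(x,y+z) = \nu(x,y)\,\nu(x,z) \qquad \text{for all } x,y,z \in Q,
\]
so $\nu(x,\cdot)\colon Q_+ \to \mathbb{C}^\times$ is a group homomorphism for every $x$. By Lagrange $ny = 0$ for every $y \in Q_+$, and $\nu(x,0) = 1$ by \eqref{eq:FS2}, whence $\nu(x,y)^n = \nu(x,ny) = 1$. Thus $(1,\nu)^n = (1,1)$ pointwise. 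Combining with Step 1,
\[
(\alpha,\mu)^{n^2} = (\alpha^n,\mu^n)^n \ \text{is cohomologous to}\ (1,\nu)^n = (1,1),
\]
so $(\alpha,\mu)^{n^2}$ is a coboundary, as required.

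\textbf{Main obstacle.} The difficulty is that a direct imitation of the classical group-theoretic Schur argument fails: applying the cocycle trick to $\alpha$ and $\mu$ separately would produce \emph{two} maps $h_1,h_2\colon Q \to \mathbb{C}^\times$ trivializing $\alpha^n$ and $\mu^n$ respectively, whereas a skew brace coboundary requires a \emph{single} $h$ trivializing both components simultaneously via $\partial_+ h$ and $\partial_\circ h$. The resolution is to treat the two components asymmetrically: one classical reduction on the additive side to land in the subgroup $S^2_+(\mathcal{D})$, then a decisive use of the compatibility axiom \eqref{eq:equivFS5} to gain right-additivity of the remaining piece $\nu$, and finally Lagrange in $Q_+$ to kill $\nu^n$. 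This explains why the bound is $|Q|^2$ rather than $|Q|$.
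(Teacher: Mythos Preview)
Your proof is correct and takes a genuinely different route from the paper's.  Both arguments open with the same classical Schur averaging on the additive side: setting $A(x)=\prod_{z}\alpha(x,z)$ yields $\partial_+ A=\alpha^n$.  From there the paper stays symmetric: it also forms $M(x)=\prod_z\mu(x,z)$, takes the product of the compatibility identity \eqref{eq:FS5} over all $z$ to obtain a relation between $A$ and $M$, combines this with \eqref{eq:A} and \eqref{eq:M} to deduce $\partial_\circ A=\partial_\circ M$, and then shows $A^n=M^n$, so that $h=A^n=M^n$ trivializes $(\alpha^{n^2},\mu^{n^2})$ simultaneously.  You instead work asymmetrically: after dividing by the coboundary $(\partial_+ A,\partial_\circ A)$ you land on a cocycle $(1,\nu)\in S^2_+$, and then invoke the compatibility in its pointwise form \eqref{eq:equivFS5} (exactly the content of Proposition~\ref{pro:bilinearfactorset}) to see that $\nu(x,\cdot)$ is an additive character, whence $\nu^n=1$ by Lagrange.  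Your approach is more structural---it reuses the $S^2_+$ machinery already developed in the paper and avoids the product-over-$z$ manipulation of \eqref{eq:FS5}---while the paper's approach is more hands-on but extracts sharper intermediate information (the explicit identity $\partial_\circ A=\partial_\circ M$).  Either way the final coboundary is $(\partial_+ A^n,\partial_\circ A^n)$, so the two arguments converge on the same witness.
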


\begin{proof}
    Let $n=|Q|$. Let 
    \begin{equation}
    \label{eq:AandM}
    A(x)=\prod_{z\in Q}\alpha(x,z),
    \quad
    M(x)=\prod_{z\in Q}\mu(x,z).
    \end{equation}
    Note that $\alpha(y,z)\alpha(x+y,z)^{-1}\alpha(x,y+z)=\alpha(x,y)$ for all $x,y,z\in Q$.  
    By multiplying this equality for every $x\in Q$, 
    \begin{equation}
        \label{eq:A}
        A(y)A(x+y)^{-1}A(x)=\alpha(x,y)^n.
    \end{equation}
    Similarly, 
    \begin{equation}
        \label{eq:M}
        M(y)M(x\circ y)^{-1}M(x)=\mu(x,y)^n.
    \end{equation}
    By using \eqref{eq:FS5} we obtain that
    \begin{equation}
    \label{eq:AM}
    A(y)M(x)\mu(x,y)^{-n}\alpha(x,-x)^n\alpha(x\circ y,-x)^{-n}M(x)^{-1}=A(x\circ y-x).
    \end{equation}
    Combining \eqref{eq:A}, \eqref{eq:M} and \eqref{eq:AM} and canceling some factors, 
    \[
    A(y)A(x\circ y)^{-1}A(x)=M(y)M(x\circ y)^{-1}M(x).
    \]
    It follows that
    \begin{align*}
    M(x)^n&=\prod_{z\in Q}\mu(x,z)^n\\
    &=\prod_{z\in Q}M(z)M(x\circ z)^{-1}M(x)\\
    &=\prod_{z\in Q}A(z)A(x\circ z)^{-1}A(x)=A(x)^n.
    \end{align*}
    Let $h\colon Q\to\C^{\times}$, $h(x)=M(x)^n$. Then $h(0)=1$ and
    \[
    h(y)h(x+y)^{-1}h(x)=\alpha(x,y)^{n^2},
    \quad
    h(y)h(x\circ y)h(x)=\mu(x,y)^{n^2}.
    \]
    Hence $M_b(Q)^{n^2}=0$. 
\end{proof}

For $n\in\Z_{>1}$ we write $G_n=\{z\in\C:z^n=1\}$.

\begin{thm}
\label{thm:M(Q)finite}
    If $Q$ is a finite skew brace, then $M_b(Q)$ is finite. More precisely, 
    every element of $M_b(Q)$ of order $n$ 
    has a representative $(\alpha,\mu)$ such that $\Img(\alpha),\Img(\mu)\subseteq G_{n}$.
\end{thm}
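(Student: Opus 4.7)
The plan is to leverage \Cref{thm:M(Q)exp} and reduce to showing the more precise second statement, since once every class of order $n$ has a representative with values in $G_n$, finiteness follows immediately: by \Cref{thm:M(Q)exp} the exponent $N$ of $M_b(Q)$ divides $|Q|^2$, so every class has a representative in the finite set of pairs of maps $Q\times Q\to G_N$, and this set is finite because $Q$ is finite.

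To prove the precise statement, fix a class $\overline{(\alpha,\mu)}\in M_b(Q)$ of order $n$. The idea is to twist $(\alpha,\mu)$ by a suitable coboundary to kill the denominators. Since the class has order $n$, the pair $(\alpha^n,\mu^n)$ is a coboundary of $\cD=(Q,\C^\times)$, i.e.\ there exists $h\colon Q\to\C^\times$ with $h(0)=1$ such that
\[
\alpha(x,y)^n=\partial_+h(x,y)\quad\text{and}\quad \mu(x,y)^n=\partial_\circ h(x,y)
\]
for all $x,y\in Q$. Because $\C^\times$ is divisible and $Q$ is finite, I can choose $g\colon Q\to\C^\times$ with $g(0)=1$ such that $g(x)^n=h(x)$ for every $x\in Q$ (pick any $n$-th root of $h(x)$ in $\C^\times$ for each $x\neq 0$).

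Now set $\alpha'=\alpha\cdot(\partial_+g)^{-1}$ and $\mu'=\mu\cdot(\partial_\circ g)^{-1}$. Then $(\alpha',\mu')$ differs from $(\alpha,\mu)$ by the coboundary $(\partial_+g,\partial_\circ g)\in B^2_b(\cD)$, so $\overline{(\alpha',\mu')}=\overline{(\alpha,\mu)}$. Raising to the $n$-th power and using that $g^n=h$,
\[
(\alpha')^n=\alpha^n\cdot(\partial_+g)^{-n}=\alpha^n\cdot(\partial_+h)^{-1}=1,
\]
and similarly $(\mu')^n=1$. Hence $\Img(\alpha'),\Img(\mu')\subseteq G_n$, which proves the refined statement, and therefore the finiteness of $M_b(Q)$ as explained above.

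There is no real obstacle: the key move is the standard trick of replacing $h$ by an $n$-th root $g$, which works because $\C^\times$ is divisible. The only thing to double-check is that this twist stays inside $B^2_b(\cD)$, which it does by construction, since $(\partial_+g,\partial_\circ g)$ is built from the \emph{same} function $g$, so it is a legitimate coboundary in the sense of the definition of $B^2_b(\cD)$.
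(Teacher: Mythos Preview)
Your proof is correct and follows essentially the same argument as the paper: both pick an $n$-th root of the map witnessing that $(\alpha^n,\mu^n)$ is a coboundary and twist by the resulting coboundary to land in $G_n$, then invoke \Cref{thm:M(Q)exp} for finiteness.
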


\begin{proof}
    Let $(\alpha,\mu)$ be a factor set such that there exists a map $k:Q\to \C^{\times}$ with $k(0)=1$ and $(\alpha^n,\mu^n)=(\partial_+ k,\partial_\circ k)$.
    Let $h(0)=1$ and 
    for $x\in Q\setminus\{0\}$ let $h(x)$ be an $n$-th root of $k(x)^{-1}$. Define the maps $\alpha',\mu'\colon Q\times Q \to \C^{\times}$ such that 
    \begin{align*}
        \alpha'(x,y)&=\alpha(x,y) h(y)h(x+y)^{-1} h(x),\\
        \mu'(x,y)&=\mu(x,y) h(y)h(x\circ y)^{-1}h(x).    
    \end{align*} 
    Clearly, $(\alpha',\mu')$ is cohomologous to $(\alpha,\mu)$. Moreover,
    \[
    \alpha'(x,y)^{n}=\mu(x,y)^{n}=1.
    \]
   Therefore, by Theorem \ref{thm:M(Q)exp} each element of $M_b(Q)$ is uniquely determined by a map $Q\times Q\rightarrow G_{n^2}\times G_{n^2}$. There is only a finite number of such maps; thus $M_b(Q)$ is finite.
\end{proof}


\subsection{Trivial and almost trivial skew braces}



\begin{pro}
Let $G$ be a group. Then 
$M_{b}(G)\simeq M_{b}(G^{\op})\simeq M(G)\times (G'\otimes G')$.
\end{pro}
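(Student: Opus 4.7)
The plan is to combine two earlier tools: Theorem \ref{thm:M(Gop)} (the cohomological decomposition for trivial skew braces) and Theorem \ref{thm:G=Gop} (the $G \leftrightarrow G^{\op}$ symmetry), together with a Pontryagin duality identification. I view $G$ as a trivial skew brace and set $K = \C^{\times}$, so $M_b(G) = H_b^2(G, \C^{\times})$ by Definition \ref{defn:SchurMultiplier}.

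For the first isomorphism, Theorem \ref{thm:M(Gop)} directly gives
\[
M_b(G) \simeq H^2(G, \C^{\times}) \times \Hom(G, \Hom(G, \C^{\times})) = M(G) \times \Hom(G, \Hom(G, \C^{\times})).
\]
It then remains to identify the second factor with $G' \otimes G'$, where I read $G'$ as the abelianization $G^{\ab}$ (this matches the identification already used in the statement of Theorem \ref{thm:directproduct}). Since $\Hom(G, \C^{\times})$ is abelian, every homomorphism $G \to \Hom(G, \C^{\times})$ factors through $G^{\ab}$, and similarly $\Hom(G, \C^{\times}) = \Hom(G^{\ab}, \C^{\times})$. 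Tensor-hom adjunction over $\Z$ then yields
\[
\Hom(G^{\ab}, \Hom(G^{\ab}, \C^{\times})) \simeq \Hom(G^{\ab} \otimes_{\Z} G^{\ab}, \C^{\times}),
\]
which is the Pontryagin dual of the finite abelian group $G^{\ab} \otimes G^{\ab}$ and is therefore noncanonically isomorphic to it.

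The second isomorphism $M_b(G) \simeq M_b(G^{\op})$ is immediate from Theorem \ref{thm:G=Gop} applied with $K = \C^{\times}$. There is no substantive obstacle in the argument: the proposition is a direct assembly of two previous theorems plus the Pontryagin duality identification just described, and the only subtlety is to interpret the factor $G' \otimes G'$ of the statement as $G^{\ab} \otimes G^{\ab}$, which is what the decomposition of Theorem \ref{thm:M(Gop)} naturally produces.
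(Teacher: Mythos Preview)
Your proof is correct and follows the same approach as the paper, which simply cites Theorems~\ref{thm:M(Gop)} and~\ref{thm:G=Gop}. You correctly flag that $G'$ in the statement must be read as $G^{\ab}$, and you spell out the Pontryagin duality identification $\Hom(G,\Hom(G,\C^{\times}))\simeq G^{\ab}\otimes G^{\ab}$ that the paper leaves implicit (cf.\ the identification stated in Theorem~\ref{thm:directproduct} and proved later in Lemma~\ref{lem:identification}).
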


\begin{proof}
    It follows from Theorems \ref{thm:M(Gop)} and \ref{thm:G=Gop}.
\end{proof}

\begin{exa}
For positive integers $n_1,\dots,n_k$ such that 
$n_{i+1}$ divides $n_i$ for all $1\leq i\leq k-1$ let 
$G$ be the abelian group 
\[
G=\Z/{(n_1)}\times\Z/{(n_2)}\times\dots\times\Z/{(n_k)}.
\]
By Theorem \ref{thm:directproduct}, 
$M(G)\simeq \Z/{(n_2)}\times \Z/(n_3)^2\times\dots \times \Z/{(n_k)}^{k-1}$. 
Since the Schur multiplier of a finite cyclic group
is trivial (see for example \cite[Proposition 2.1.1]{MR1200015}), 
Theorem \ref{thm:M(Gop)} implies 
\[
M_{b}(G)\simeq \Z/{(n_1)}\times\Z/{(n_2)}^4\times\Z/{(n_3)}^7\times\dots \times \Z/{(n_k)}^{3k-2}.
\]
\end{exa}



\begin{exa}
    If $Q_8$ denotes the quaternion group, then $M(Q_8)$ is trivial. Thus
    $M_{b}(Q_8)\simeq \Z/{(2)}^4$.
\end{exa}

\subsection{Cyclic skew braces}


\begin{defn}
    A skew brace $Q$ is called cyclic if $Q_+$ is cyclic and \emph{bicyclic} if both groups $Q_+$ and $Q_\circ$ are cyclic.
\end{defn}

Cyclic skew braces with abelian multiplicative group
were classified by Rump \cite{MR2298848}. 
The following lemma goes back to Rump. 

\begin{lem}
Every skew brace with additive group isomorphic to $\Z/(n)$ and abelian multiplicative group  
is a skew brace isomorphic 
to $C_{(n,d)}$ for some $d$ dividing $n$, see Example \ref{exa:A(n,d)}. 
\end{lem}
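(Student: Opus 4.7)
The plan is to fix a generator of the cyclic group $A_+$ to identify $A$ additively with $\Z/(n)$, read the circle operation off the $\lambda$-map, exploit the commutativity of $A_\circ$ to pin down $\circ$ up to a single parameter $d\in\Z/(n)$, and then adjust the generator so that $d$ becomes an actual divisor of $n$.

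After fixing an additive generator $a_0$ and identifying $A_+=\Z/(n)$, every automorphism of $\Z/(n)$ is multiplication by a unit, so $\lambda_a$ is multiplication by some $c_a\in(\Z/(n))^{\times}$, and hence $a\circ b=a+\lambda_a(b)=a+c_ab$. The hypothesis $a\circ b=b\circ a$ gives $a+c_ab=b+c_ba$; specializing $b=1$ yields $c_a=1+(c_1-1)a$. Writing $d:=c_1-1$, this becomes
\[
c_a=1+da\qquad\text{and}\qquad a\circ b=a+b+dab.
\]
Since $c_a$ must be a unit for every $a\in\Z/(n)$, every prime $p\mid n$ must divide $d$: otherwise $d$ would be invertible modulo $p$ and $a\equiv-d^{-1}\pmod p$ would force $c_a\equiv 0\pmod p$.

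It remains to arrange $d$ to be a genuine divisor of $n$. Replacing $a_0$ by $ua_0$ for a unit $u$ is an additive automorphism, and a direct computation shows that under this relabeling the parameter $d$ becomes $ud$. Let $d':=\gcd(d,n)$; it is a divisor of $n$, and the preceding paragraph forces every prime of $n$ to divide $d'$. Writing $d=jd'$ with $\gcd(j,n/d')=1$, the requirement $ud\equiv d'\pmod n$ reduces to $u\equiv j^{-1}\pmod{n/d'}$, which already implies $\gcd(u,n/d')=1$. For any prime $p\mid d'$ coprime to $n/d'$, we can further shift $u$ by multiples of $n/d'$ (which are coprime to $p$) to ensure $p\nmid u$; by CRT a common such $u$ exists and is a unit modulo $n$. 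After this change of generator the brace operation takes the form $a\circ b=a+b+d'ab$, so $A\cong C_{(n,d')}$.

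The delicate point is the last CRT adjustment: it relies crucially on the hypothesis that every prime of $n$ divides $d$, since otherwise the ``bad'' primes of $d'$ lying outside $n/d'$ could not be cleared by a unit change of generator. Everything else is a direct unwinding of the $\lambda$-map and the abelianness of $A_\circ$.
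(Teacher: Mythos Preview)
The paper gives no proof of this lemma at all: it is stated with the attribution ``goes back to Rump'' and nothing further. So there is no paper argument to compare against, and your self-contained proof is a genuine addition.

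Your argument is correct. Identifying $A_+=\Z/(n)$, writing $\lambda_a$ as multiplication by a unit $c_a$, and using commutativity of $\circ$ to deduce $c_a=1+da$ with $d=c_1-1$ is exactly the right mechanism; the check that every prime $p\mid n$ must divide $d$ (else some $c_a$ fails to be a unit) is clean, and the change-of-generator computation showing the parameter transforms as $d\mapsto ud$ is accurate. The CRT step producing a unit $u$ with $ud\equiv d'\pmod n$ is also correct.

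One small point about your closing paragraph: the CRT adjustment does \emph{not} actually rely on the prime condition. The lifting of $u\equiv j^{-1}\pmod{n/d'}$ to a unit modulo $n$ works unconditionally, since any prime $p\mid n$ either divides $n/d'$ (and then $p\nmid u$ because $j^{-1}$ is a unit mod $n/d'$) or does not (and then $p$ is coprime to $n/d'$, so you can shift $u$ by multiples of $n/d'$ to avoid $p$). Where the prime condition is genuinely used is elsewhere: it guarantees that every prime of $n$ divides $d'=\gcd(d,n)$, so that $C_{(n,d')}$ is well-defined in the sense of Example~\ref{exa:A(n,d)}. You might rephrase the last paragraph accordingly.
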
 

\begin{pro}[Proposition 6 of \cite{MR2298848}]
\label{pro:Rump}
    Let $n\geq2$ and $d$ be a divisor of $n$. 
    Then $C_{(n,d)}$ 
    is bicyclic if and only if 
    $4\mid d$ whenever $4\mid n$. In the exceptional case where $4\mid n$ and $2$ is the highest power of $2$ dividing $d$, the multiplicative 
    group of $C_{(n,d)}$ is isomorphic to 
    $\langle 1\rangle\times\langle -1\rangle\simeq \Z/{\left(\frac{n}{2}\right)}\times\Z/{(2)}$.
\end{pro}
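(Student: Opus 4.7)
The plan is to use the injective group homomorphism
\[
\phi\colon(\Z/(n),\circ)\to(\Z/(dn))^{\times},\qquad \phi(x)=1+dx\pmod{dn},
\]
which is multiplicative because $(1+dx)(1+dy)=1+d(x\circ y)$, and injective because $d(x-x')\equiv 0\pmod{dn}$ forces $x\equiv x'\pmod n$. This identifies the multiplicative group of $C_{(n,d)}$ with a subgroup of order $n$ of the units modulo $dn$, so its cyclicity can be read off from the structure of these units.

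Next I would reduce to prime powers via the Chinese Remainder Theorem. Writing $n=2^am$ with $m$ odd and $d=2^bd'$ with $d'\mid m$ and $d'$ odd, a short calculation shows that multiplying the variable by a suitable unit on each factor yields a skew-brace isomorphism $C_{(n,d)}\cong C_{(2^a,2^b)}\times C_{(m,d')}$. Since the $\phi$-image of $C_{(m,d')}$ sits inside a product of cyclic groups $(\Z/(p^k))^{\times}$ for odd primes $p$, the multiplicative group of $C_{(m,d')}$ is cyclic of order $m$. Hence bicyclicity of $C_{(n,d)}$ is equivalent to cyclicity of the $2$-part $C_{(2^a,2^b)}$.

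For the $2$-part, recall that $(\Z/(2^k))^{\times}\cong\langle -1\rangle\times\langle 5\rangle$ for $k\geq 3$, with $\langle 5\rangle$ cyclic of order $2^{k-2}$ and equal to $\{x:x\equiv 1\pmod 4\}$. The image of $\phi$ applied to $C_{(2^a,2^b)}$ is a subgroup of $(\Z/(2^{a+b}))^{\times}$ of order $2^a$. If $b\geq 2$ this image is contained in $\langle 5\rangle$ and hence is cyclic, so $C_{(n,d)}$ is bicyclic. If $b=1$ and $a\geq 2$, the image exhausts $(\Z/(2^{a+1}))^{\times}\cong\Z/2\times\Z/(2^{a-1})$, which is not cyclic; combined with the cyclic $\Z/m$ factor from the odd part, one obtains $\Z/2\times\Z/(2^{a-1}m)=\Z/2\times\Z/(n/2)$, the stated exceptional decomposition. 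The remaining cases $a\leq 1$ (equivalently, $4\nmid n$) are cyclic by direct inspection.

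The main obstacle I expect is justifying the CRT skew-brace decomposition carefully: one must verify that rescaling by units really produces a valid skew-brace isomorphism, so that the induced operation on the $2^a$-factor is genuinely isomorphic to $C_{(2^a,2^b)}$ (and similarly for the odd part). The explicit identification of the two factors with $\langle 1\rangle$ and $\langle -1\rangle$ in the exceptional case is then a direct order computation using $\phi$, since the order of an element in $(\Z/(n),\circ)$ equals the order of its $\phi$-image in the product of unit groups. The rest of the argument reduces to standard facts about units modulo prime powers.
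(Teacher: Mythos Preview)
The paper does not supply its own proof of this proposition; it is quoted verbatim from Rump's article (Proposition~6 of \cite{MR2298848}) and is invoked later only in the special case $n=2^{m+1}$, $d=2$. Your argument via the injective homomorphism $\phi(x)=1+dx$ into $(\Z/(dn))^{\times}$ together with the Chinese Remainder decomposition is correct and is the standard route to this result. The skew-brace CRT splitting you flag as the main obstacle does go through exactly as you sketch: the ring isomorphism $\Z/(n)\cong\Z/(2^{a})\times\Z/(m)$ respects both operations, and rescaling by the odd unit $d'$ on the $2$-primary factor turns the induced parameter $2^{b}d'$ into $2^{b}$.

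One caveat: your closing sentence overstates what the order computation yields. The last clause of the proposition, that the exceptional multiplicative group is the internal direct product $\langle 1\rangle\times\langle -1\rangle$ with $\langle -1\rangle$ of order~$2$, is not true in the generality stated. For example, with $n=12$ and $d=6$ one has $(-1)\circ(-1)=d-2=4\neq 0$, so $-1$ has order~$6$, not~$2$. The abstract isomorphism type $\Z/(n/2)\times\Z/(2)$ that your argument establishes is correct in all cases, and the explicit generators $1$ and $-1$ do work when $n$ is a power of~$2$ (the only situation in which the paper uses them), but in general the order-$2$ generator is $n/2$ rather than $-1$. So your proof of the biconditional and of the abstract structure is complete; only the generator identification needs the extra hypothesis that $n$ be a $2$-power.
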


\begin{rem}
    We can define the operation $a\circ b= a+2ab+b$ for all integers $a,b$.
 One proves by induction on $k$ that
    $1^{\circ k}=\frac{3^k-1}{2}$ for all $k\geq1$. 
\end{rem}

\begin{rem}
\label{rem:divisibility}
Let $n\geq 1$, then $2^{n+2}\mid 3^{2^n}-1$ and $2^{n+3}\nmid 3^{2^n}-1$, as 
\[
3^{2^n}-1= (3-1)(3+1)(3^2+1)(3^{2^2}+1)\dots(3^{2^{n-1}}+1).
\]
\end{rem}
\begin{pro}
\label{pro:generatorprecoboundaries}
    Let $n\geq2$ and $d$ be a divisor of $n$ such that 
    $C_{(n,d)}$ is bicylcic or $n=2^{m+1}$ and $d=2$ for some $m\geq 1$. Let $\xi$ be an $n$-th primitive root of unity and let $\gamma\colon C_{(n,d)}\times C_{(n,d)}\to \C^{\times}$, 
$(x,y)\mapsto \xi^{xy}$. Then $(1,\gamma)\in P(C_{(n,d)},\C^{\times})$.
\end{pro}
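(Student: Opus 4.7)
The plan is to exhibit the pair $(1,\gamma)$ as $(\partial_+ h, \partial_\circ k)$ with the trivial choice $h\equiv 1$, which makes $\partial_+ h\equiv 1$ automatically. After checking that $(1,\gamma)$ lies in $Z^2_b(C_{(n,d)},\C^\times)$, the whole proposition reduces to constructing $k\colon C_{(n,d)}\to\C^\times$ with $\partial_\circ k=\gamma$, i.e., to proving that $\gamma$ is a $2$-coboundary for the group $(C_{(n,d)},\circ)$ with coefficients in $\C^\times$.

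To confirm that $(1,\gamma)$ is a factor set I would check \eqref{eq:FS1}--\eqref{eq:FS5} from Definition~\ref{defn:FS}. Axioms \eqref{eq:FS1} and \eqref{eq:FS3} are immediate with $\alpha=1$, axiom \eqref{eq:FS2} follows from $\xi^0=1$, and with $\alpha=1$ the compatibility \eqref{eq:FS5} collapses to right-multiplicativity $\gamma(x,y+z)=\gamma(x,y)\gamma(x,z)$, which is obvious from the definition of $\gamma$. For \eqref{eq:FS4}, expanding $x\circ y=x+y+dxy$ in $\gamma(y,z)\gamma(x\circ y,z)^{-1}\gamma(x,y\circ z)\gamma(x,y)^{-1}$ yields $\xi^{yz-(x+y+dxy)z+x(y+z+dyz)-xy}=\xi^0=1$ after a one-line cancellation of the $dxyz$ and $xy$ terms.

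The heart of the proof is showing that $\gamma$ is a $2$-coboundary on $(C_{(n,d)},\circ)$. The crucial observation is that this group is \emph{abelian}, since $x\circ y=x+y+dxy=y\circ x$, and that the cocycle $\gamma$ is \emph{symmetric}: $\gamma(x,y)=\xi^{xy}=\xi^{yx}=\gamma(y,x)$. On any abelian group $G$, symmetric $2$-cocycles with values in an abelian group $A$ modulo coboundaries classify abelian extensions $0\to A\to E\to G\to 0$, and such extensions are classified by $\Ext^1_{\Z}(G,A)$. Since $\C^\times$ is divisible it is injective as a $\Z$-module, so $\Ext^1_{\Z}(G,\C^\times)=0$ and every symmetric $2$-cocycle with values in $\C^\times$ is a coboundary. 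Applied to $\gamma$, this yields a map $k$ with $\gamma=\partial_\circ k$, which we normalize so that $k(0)=1$. Taking $h\equiv 1$, we conclude that $(1,\gamma)=(\partial_+ h,\partial_\circ k)\in P(C_{(n,d)},\C^\times)$.

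The main obstacle is resisting the temptation to write $k$ down by brute force: a polynomial ansatz $k(x)=\zeta^{ax^2+bx}$ in a $2n$-th root of unity $\zeta$ collides with parity constraints in the exceptional case $n=2^{m+1}$, $d=2$, whereas the symmetry argument dispatches both the bicyclic and the exceptional case uniformly. In fact, the argument above never invokes the hypothesis on $(n,d)$ beyond the existence of $C_{(n,d)}$ itself; that hypothesis is presumably used later, when one leverages the explicit structure of $(C_{(n,d)},\circ)$ as either cyclic or $\Z/(2^m)\times\Z/(2)$.
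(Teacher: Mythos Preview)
Your argument is correct and, in fact, cleaner than the paper's. The verification that $(1,\gamma)\in Z^2_b$ matches the paper's. The difference lies in how you produce $k$ with $\partial_\circ k=\gamma$. The paper splits into two cases: in the bicyclic case it invokes the vanishing of the Schur multiplier of the cyclic group $(C_{(n,d)},\circ)$, so \emph{every} $2$-cocycle on that group with values in $\C^\times$ is a coboundary; in the exceptional case $n=2^{m+1},\ d=2$, where the multiplicative group is $\Z/(2^m)\times\Z/(2)$ and has nontrivial Schur multiplier, the paper writes down an explicit $h$ using the decomposition $1^{\circ k}\circ(-1)^{\circ s}$ and a delicate $2$-adic check (Remark~\ref{rem:divisibility}). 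Your route sidesteps this case split by noticing that $\gamma$ is \emph{symmetric} and that $(C_{(n,d)},\circ)$ is abelian, whence $[\gamma]$ lands in $\Ext^1_\Z(G,\C^\times)=0$. This is genuinely simpler, and your closing remark is right: the hypothesis on $(n,d)$ plays no role in your proof and the conclusion $(1,\gamma)\in P^2(C_{(n,d)},\C^\times)$ holds for every $C_{(n,d)}$. The paper's more explicit construction does have the minor advantage of making $k$ concrete, but that explicitness is not used downstream.
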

\begin{proof}
Direct calculations 
show that 
\begin{gather*}
    \gamma(y,z)\gamma(x\circ y,z)^{-1}\gamma(x,y\circ z)\gamma(x,y)^{-1}=1,\quad 
    \gamma(x,y+z)=\gamma(x,y)\gamma(x,z)
\end{gather*}
for all $x,y,z\in C_{(n,d)}$.

If $C_{(n,d)}$ is bicyclic, then $\gamma$ is a coboundary, as the Schur multiplier of the multiplicative group is trivial. Hence the claim follows.
Otherwise, let $B=C_{(2^{m+1},2)}$ and 
$\mu$ be a $2^{m+2}$-th primitive root of one such that $f(x,y)= \mu^{2xy}$.
Let 
\[
h\colon B\to \C^{\times},
\quad 
1^{\circ k}\circ(-1)^{\circ s}\mapsto \mu^{1^{\circ k}\circ (-1)^{\circ s}+2k}.
\]
This makes sense because of Proposition \ref{pro:Rump}. The definition does not depend on the residue of $s$ modulo $2$.
We need to verify that it does not depend on the residue of $k$ modulo $2^m$. We claim that $\forall a\in \Z$
\[
1^{\circ (k+a2^m)}\circ(-1)^{\circ s}+2(k+a2^m)\equiv 1^{\circ k}\circ (-1)^{\circ s}+2k\bmod 2^{m+2}.
\]
As $1^{\circ 2^{m+1}}\equiv 0 \bmod 2^{m+2}$ it is enough to check in case $a=1$. By Remark \ref{rem:divisibility}, there exists an odd integer $\alpha$ such that $1^{\circ 2^m}=\alpha 2^{m+1}$. Thus  
\[
1^{\circ 2^m}+2^{m+1}= 2^{m+1}(\alpha+1)\equiv  0\bmod 2^{m+2}.
\]
which proves the claim. Finally,
direct computations show that $\partial_{\circ}h= f$. 
\end{proof}
\begin{thm}
\label{thm:precoboundariescycl}
    Let $n\geq 2$ and $d$ be as in Proposition \ref{pro:generatorprecoboundaries}, $S(C_{(n,d)},\C^{\times})\simeq \Z/(d)$.
\end{thm}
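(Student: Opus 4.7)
The plan is to show that $\overline{(1,\gamma)}$ generates $S(C_{(n,d)},\C^\times)$ with exact order $d$; the isomorphism $S(C_{(n,d)},\C^\times)\simeq\Z/(d)$ will follow. Proposition~\ref{pro:generatorprecoboundaries} already places $\overline{(1,\gamma)}$ inside $S(C_{(n,d)},\C^\times)$, so only its order and surjectivity need to be established.

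To compute the order, I would first exhibit $(1,\gamma^d)$ as a coboundary by taking $l(x)=\xi^{-x}$: this $l$ is an additive homomorphism, so $\partial_+l=1$, while a direct calculation gives $\partial_\circ l(x,y)=\xi^{-y+(x+y+dxy)-x}=\xi^{dxy}=\gamma^d(x,y)$. Conversely, suppose $(1,\gamma^m)=(\partial_+l,\partial_\circ l)$ for some $l$ with $l(0)=1$. The condition $\partial_+l=1$ forces $l(x)=\zeta^x$ with $\zeta^n=1$, and then $\partial_\circ l(x,y)=\zeta^{-dxy}$; evaluating at $x=y=1$ yields $\xi^m=\zeta^{-d}$, which must be an $(n/d)$-th root of unity, hence $d\mid m$. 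So the order of $\overline{(1,\gamma)}$ is exactly $d$.

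The main step is to show that every class in $S(C_{(n,d)},\C^\times)$ is a power of $\overline{(1,\gamma)}$. Given $\overline{(\alpha,\mu)}\in S$, write $\alpha=\partial_+h$ and replace $(\alpha,\mu)$ with the cohomologous pair $(\alpha\cdot(\partial_+h)^{-1},\mu\cdot(\partial_\circ h)^{-1})$, so I may assume $\alpha=1$. Specializing \eqref{eq:equivFS5} to $\alpha=1$ gives $\mu(x,y+z)=\mu(x,y)\mu(x,z)$, so $\mu(x,\cdot)\colon\Z/(n)\to\C^\times$ is a homomorphism, of the form $\mu(x,y)=\xi^{f(x)y}$ for a unique $f\colon C_{(n,d)}\to\Z/(n)$. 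The key observation is that $(C_{(n,d)},\circ)$ is abelian, because $x\circ y=x+y+dxy$ is symmetric in $x$ and $y$; consequently any multiplicative coboundary $\partial_\circ k(x,y)=k(y)k(x\circ y)^{-1}k(x)$ is a symmetric function of its two arguments. Since $\overline{(1,\mu)}\in S$, the factor set $\mu$ is a multiplicative coboundary, hence symmetric, whence $f(x)y\equiv f(y)x\pmod n$ for all $x,y$. Setting $y=1$ and $c=f(1)$ gives $f(x)\equiv cx\pmod n$, so $\mu=\gamma^c$ and $\overline{(1,\mu)}=\overline{(1,\gamma)}^c$. The delicate point is this symmetry argument: the abelianness of $(C_{(n,d)},\circ)$ is precisely what collapses the a priori two-variable datum $f$ to a single scalar $c$, which is then determined modulo $d$ by the order computation.
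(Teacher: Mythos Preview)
Your proof is correct and follows essentially the same approach as the paper's: both reduce to a representative $(1,\mu)$, use the compatibility condition to get additivity of $\mu$ in the second variable, exploit the symmetry of $\mu$ (coming from the abelianness of $(C_{(n,d)},\circ)$ and $\mu$ being a $\circ$-coboundary) to conclude $\mu=\gamma^c$, and verify the order of $\overline{(1,\gamma)}$ is exactly $d$ via the map $x\mapsto\xi^{-x}$. The only cosmetic difference is the order of presentation---you compute the order first, the paper establishes generation first.
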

\begin{proof}
    Write $Q=C_{(n,d)}$. Let $\xi$ and $\gamma$ be as in Proposition \ref{pro:generatorprecoboundaries}.
We claim that 
$\overline{(1,\gamma)}$
generates $S(C_{(n,d)},\C^{\times})$.
Every element of $S(C_{(n,d)},\C^{\times})$ has a representative of the form
$(1,\mu)$ for some $\mu\colon Q\times Q\to\C^{\times}$. 
In addition, 
\begin{equation}
\label{eq:mu}
\mu(x,y+z)=\mu(x,y)\mu(x,z)
\end{equation}
holds for all $x,y,z\in Q$ 
and $\mu=\partial_{\circ} h$ for some $h\colon Q\to\C^{\times}$. Thus 
\begin{equation}
    \label{eq:muxy=muyx}
    \mu(x,y)=h(y)h(x\circ y)^{-1}h(x)=h(x)h(y\circ x)^{-1}h(y)=\mu(y,x)
\end{equation}
for all $x,y\in Q$. From Equalities \eqref{eq:mu} 
and \eqref{eq:muxy=muyx}, 
it follows that 
\[
\mu(x,y)^n=\mu(x,ny)=1
\]
and 
that $\mu$ is uniquely determined by $\mu(1,1)$, as 
\[
\mu(x,y)=\mu(x,1)^y=\mu(1,x)^y=\mu(1,1)^{xy}.
\]
As $\mu(1,1)=\xi^k$ for some $k$,
$\overline{(1,\gamma)}$ generates $S(C_{(n,d)},\C^{\times})$. 

We claim that $d$ is the order of $\overline{(1,\gamma)}$. Let 
$q$ be the order of $\overline{(1,\gamma)}$. Then
\[
(1,\gamma^q)=(\partial_+ k,\partial_{\circ} k)
\]
for some $k\colon Q\to\C^{\times}$ such that $k(0)=1$. It follows that
$k(x+y)=k(x)k(y)$ for all $x,y\in Q$. Since
$Q_+$ is cyclic, 
$k(x)=k(1)^x$ for all $x\in Q$. In particular, since 
\[
1=k(0)=k(n)=k(1)^n,
\]
it follows that $k(1)=\xi^l$ for some $l$. Note that
\[
\xi^q=\gamma(1,1)^q= k(1)^{2}k(1\circ 1)^{-1}=\xi^{2l}\xi^{-l(2+d)}=\xi^{-ld}.
\]
Thus $n$ divides $q+ld$. Since $d$ divides $n$, it follows that $d$ divides $q$. 
Let 
\[
s\colon Q\to\C^{\times},\quad 
x\mapsto\xi^{-x}.
\]
A direct calculation shows that 
$\partial_+s(x,y)=1$ 
and $\partial_\circ s(x,y)=\gamma(x,y)^d$
for all $x,y\in Q$.
\end{proof}
\begin{cor}
    \label{cor:Schur_bicyclic}
    Let $n\geq2$ and $d$ be a divisor of $n$ such that 
    $C_{(n,d)}$ is bicyclic. 
    Then $M_b(C_{(n,d)})\simeq\Z/(d)$. 
\end{cor}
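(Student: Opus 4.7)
The plan is to combine the preceding Theorem~\ref{thm:precoboundariescycl} with the sequence $\sigma(\cD,\iota,\delta)$ introduced in Notation~\ref{not:sequence}, taking advantage of the fact that the Schur multiplier of a finite cyclic group vanishes.

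Set $Q=C_{(n,d)}$ and $\cD=(Q,\C^{\times})$, so that $M_b(Q)=H^2_b(\cD)$. Because $Q$ is bicyclic by hypothesis, both the additive group $Q_+$ and the multiplicative group $Q_\circ$ are finite cyclic. Since the Schur multiplier of a finite cyclic group is trivial (see \cite[Proposition 2.1.1]{MR1200015}), I conclude
\[
H^2(\cD_+)=M(Q_+)=0\quad\text{and}\quad H^2(\cD_\circ)=M(Q_\circ)=0.
\]

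Next, recall from Proposition~\ref{pro:K} and Notation~\ref{not:sequence} the map
\[
\delta\colon H^2_b(\cD)\to H^2(\cD_+)\times H^2(\cD_\circ),\qquad \overline{(\alpha,\mu)}\mapsto(\overline{\alpha},\overline{\mu}),
\]
and the fact that the sequence $\sigma(\cD,\iota,\delta)$ is exact at $S^2(\cD)$ and at $H^2_b(\cD)$. The target of $\delta$ being zero forces $\ker\delta=H^2_b(\cD)$, hence by exactness at the middle term $\iota\colon S^2(\cD)\to H^2_b(\cD)$ is surjective; combined with its injectivity (exactness at $S^2(\cD)$) this yields an isomorphism $S^2(\cD)\simeq H^2_b(\cD)=M_b(Q)$.

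Finally, since $C_{(n,d)}$ is bicyclic, the hypothesis of Proposition~\ref{pro:generatorprecoboundaries} is met, and Theorem~\ref{thm:precoboundariescycl} gives $S^2(\cD)=S(C_{(n,d)},\C^{\times})\simeq\Z/(d)$. Chaining the two isomorphisms yields $M_b(C_{(n,d)})\simeq\Z/(d)$, as claimed. No serious obstacle arises: all of the work has already been done in Theorem~\ref{thm:precoboundariescycl}; the only thing to verify here is the collapse of the $\delta$-map, which is immediate from cyclicity of both group structures on $Q$.
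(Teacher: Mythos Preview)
Your proposal is correct and follows exactly the same approach as the paper's proof, which is a one-liner invoking Theorem~\ref{thm:precoboundariescycl} together with the vanishing of the Schur multiplier of a cyclic group. You have simply spelled out the details of why those two facts suffice, via the sequence $\sigma(\cD)$ and the collapse of the target of $\delta$.
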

\begin{proof}
    This is a direct consequence of Theorem \ref{thm:precoboundariescycl} and the fact that the Schur multiplier of a cyclic group is trivial.
\end{proof}

\begin{thm}
\label{thm:Schur_cyclic_abelian}
Let $m\geq1$
$M_{b}(C_{(2^{m+1},2)})\simeq \Z/(2)\times\Z/(2)$.
\end{thm}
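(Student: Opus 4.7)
The plan is to exploit the vanishing $H^2(Q_+, \C^\times) = 0$ (which holds because $Q_+ = \Z/(2^{m+1})$ is cyclic) in order to reduce $M_b(Q)$ to an explicit count of twisted characters on $Q_\circ$.

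First I would observe that $\delta_+ \colon H^2_b(Q, \C^\times) \to H^2(Q_+, \C^\times) = 0$ is the zero map, so every class in $M_b(Q)$ lies in $S_+^2(Q, \C^\times)$ and by Proposition \ref{pro:bilinearfactorset} admits a representative $(1, \mu)$ with $\mu(x, y+z) = \mu(x,y)\mu(x,z)$. Because $Q_+$ is cyclic of order $n = 2^{m+1}$, such a $\mu$ is determined by the single map $\phi \colon Q \to G_n$, $\phi(x) = \mu(x,1)$, via $\mu(x, y) = \phi(x)^y$. Substituting this into the cocycle identity \eqref{eq:FS4} and specializing at $z = 1$ gives the twisted homomorphism relation
\[
\phi(x \circ y) = \phi(y)\,\phi(x)^{1+2y},
\]
while the equivalence $(1, \mu_1) \sim (1, \mu_2)$ becomes $\phi_1(x)/\phi_2(x) = \eta^{-2x}$ for some $\eta \in G_n$: the condition $\partial_+ h = 1$ forces $h(x) = \eta^x$, and then a direct computation gives $\partial_\circ h(x,y) = \eta^{-2xy}$.

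Next I would parameterize $\phi(x) = \zeta^{a(x)}$ with $\zeta$ a fixed primitive $n$-th root of unity, reducing the twisted relation to the arithmetic identity $a(x \circ y) \equiv a(y) + (1+2y)\,a(x) \pmod{n}$. The key observation is that $1 + 2(y_1 \circ y_2) = (1+2y_1)(1+2y_2)$, so together with associativity of $\circ$ the relation propagates from any generating set of $Q_\circ$. By Proposition \ref{pro:Rump} one has $Q_\circ = \langle 1 \rangle \times \langle -1 \rangle$, so $a$ is determined by the pair $(c,d) = (a(1), a(-1))$; the single consistency condition $a(1 \circ (-1)) = a((-1) \circ 1)$ simplifies to $c + d \equiv 0 \pmod{2^m}$, while the relations $1^{\circ 2^m} = 0$ and $(-1)^{\circ 2} = 0$ are automatic. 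This produces exactly $2n$ valid maps~$\phi$.

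Finally, coboundaries correspond to $(c, d) = (-2t, 2t)$ for $t \in \Z/(n)$, a subgroup of order $n/2$. The resulting quotient of order $4$ is detected by the two $\Z/(2)$-valued invariants $c \bmod 2$ and $(c+d)/2^m \bmod 2$, whence $M_b(Q) \simeq \Z/(2) \times \Z/(2)$. The main obstacle will be the propagation step above: verifying carefully that the cocycle condition indeed reduces to its restriction at the generators $\{1, -1\}$ and that the parameter $(c,d)$ subject to $c+d \equiv 0 \pmod{2^m}$ really does extend to a consistent function on all of $Q_\circ$, which is where the identity $1 + 2(y_1 \circ y_2) = (1+2y_1)(1+2y_2)$ does the essential work.
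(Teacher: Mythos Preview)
Your approach is correct and takes a genuinely different route from the paper's. The paper argues via the exact sequence
\[
0 \longrightarrow S^2(B,\C^\times) \longrightarrow M_b(B) \xrightarrow{\ \delta\ } M(B_\circ) \longrightarrow 0:
\]
it invokes Theorem~\ref{thm:precoboundariescycl} to obtain $S^2(B,\C^\times)\simeq\Z/(2)$, uses $B_\circ\simeq\Z/(2^m)\times\Z/(2)$ to get $M(B_\circ)\simeq\Z/(2)$, and then exhibits an explicit order-$2$ lift of the generator of $M(B_\circ)$, thereby showing simultaneously that $\delta$ is surjective and that the sequence splits. You instead exploit the vanishing $M(B_+)=0$ in the orthogonal direction: it forces $M_b(B)=S_+^2(B,\C^\times)$, which you then compute head-on by parameterizing right-linear cocycles $(1,\mu)$ via the twisted character $\phi(x)=\mu(x,1)$ subject to $\phi(x\circ y)=\phi(y)\,\phi(x)^{1+2y}$, and counting solutions modulo coboundaries. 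Your argument is more self-contained and elementary, bypassing Proposition~\ref{pro:generatorprecoboundaries} and Theorem~\ref{thm:precoboundariescycl} entirely; the price is the propagation check you flag at the end, together with the observation that the automaticity of the relation $1^{\circ 2^m}=0$ (i.e.\ that $a(1^{\circ 2^m})=c\,\tfrac{3^{2^m}-1}{2}\equiv 0\pmod{2^{m+1}}$ for every $c$) is precisely the divisibility statement of Remark~\ref{rem:divisibility}. The paper's route, by contrast, reuses a computation already needed for the bicyclic case and makes the splitting visible at the level of cocycles.
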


\begin{proof}
    Let $B=C_{(2^{m+1},2)}$. It is enough to show that the sequence
    \[
    \begin{tikzcd}
        0\arrow[r] & S(B,\C^{\times})\arrow[r,"\iota"] & M_{b}(B)\arrow[r,"\delta"] &M(B_\circ)\arrow[r] & 0
    \end{tikzcd}
    \]
    is exact and splits. 
By Theorem \ref{thm:directproduct}, $M(B_\circ)=\langle\overline{\gamma}\rangle\simeq\Z/(2)$, where
\[
\gamma\colon B_\circ\times B_\circ\to\C^{\times},
\quad
(1^{\circ k}\circ(-1)^{\circ s},1^{\circ l}\circ(-1)^{\circ t})\mapsto (-1)^{sl},
\]
Let $h:B\to \C^\times; 1^{\circ k}\circ(-1)^{\circ t}\mapsto i^{(-1)^{\circ t}}$. 
As $(-1)^{\circ s}\equiv 1^{\circ s}\equiv s\: (mod\: 2)$ for all $s\in \mathbb{Z}$, straightforward computations show that 
\[\partial_\circ h\gamma(1^{\circ k}\circ(-1)^{\circ s},1^{\circ l}\circ(-1)^{\circ t})= (-1)^{(-1)^{\circ s}(1^{\circ l}\circ(-1)^{\circ t})}\]
so that the pair $(1,\partial_\circ h\gamma)$ is a factor set of skew braces. Since $(\partial_\circ h\gamma)^2=1$, 
$\overline{(1,\partial_\circ h\gamma)}$ has order two. 
Thus 
$s\colon M(B_\circ)\to M_{b}(B)$, $\overline{\gamma}\mapsto\overline{(1,\partial_\circ h\gamma})$, 
is a section of $\delta$. 
\end{proof}
\subsection{Direct product}


\begin{pro}
    Every finite radical ring with cyclic additive group and abelian non-cyclic multiplicative group 
    is isomorphic to $C_{(n,d)}\times C_{(2^{m+1},2)}$ for some $n\geq1$ odd, $d$ a divisor of $n$ and
    $m\geq1$. 
\end{pro}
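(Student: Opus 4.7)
The plan is to apply Rump's classification to $R$, then use the Chinese Remainder Theorem to split off the $2$-part, and finally apply a rescaling isomorphism to put both factors into the standard form of Example~\ref{exa:A(n,d)}.

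First, I would invoke the lemma preceding Proposition~\ref{pro:Rump} to obtain an isomorphism $R \cong C_{(N,D)}$ where $N = |R|$ and $D$ is a divisor of $N$. The hypothesis that the multiplicative group of $R$ is non-cyclic forces us into the exceptional case of Proposition~\ref{pro:Rump}: $4 \mid N$ and the highest power of $2$ dividing $D$ is exactly $2$. Write $N = 2^{m+1} n$ with $m \geq 1$ and $n$ odd, and $D = 2d$ with $d$ odd. The divisibility $D \mid N$ combined with the parity of $d$ gives $d \mid n$, and the requirement from Example~\ref{exa:A(n,d)} that every prime divisor of $N$ divide $D$ gives that every prime divisor of $n$ divides $d$.

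Next, since $\gcd(2^{m+1}, n) = 1$, the Chinese Remainder Theorem yields a ring isomorphism $\Z/(N) \cong \Z/(2^{m+1}) \times \Z/(n)$. Because both addition and ring multiplication are componentwise on the right-hand side, the circle operation $x \circ y = x + y + Dxy$ splits in the same way. This gives an isomorphism of skew braces $R \cong \tilde{R}_1 \times \tilde{R}_2$, where $\tilde{R}_1 = (\Z/(2^{m+1}), +, \circ)$ with $x \circ y = x + y + 2dxy$ computed modulo $2^{m+1}$, and $\tilde{R}_2 = (\Z/(n), +, \circ)$ with the analogous operation modulo $n$.

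Finally, I would invoke the following rescaling observation, verified by a direct computation: for $a \in (\Z/(M))^{\times}$, the map $x \mapsto ax$ is a skew brace isomorphism from $(\Z/(M), +, \circ')$ to $(\Z/(M), +, \circ'')$ whenever $x \circ' y = x + y + (aD'')xy$ and $x \circ'' y = x + y + D''xy$. Applied to $\tilde{R}_1$ with $D'' = 2$ and $a = d$ (a unit modulo $2^{m+1}$ because $d$ is odd), this yields $\tilde{R}_1 \cong C_{(2^{m+1}, 2)}$. Applied to $\tilde{R}_2$ with $D'' = d$ and $a = 2$ (a unit modulo $n$ because $n$ is odd), this yields $\tilde{R}_2 \cong C_{(n, d)}$. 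The conclusion $R \cong C_{(n,d)} \times C_{(2^{m+1},2)}$ then follows. I do not expect a genuine obstacle: the only real care is in matching the constants $d$ and $2d$ to their respective targets through the rescaling and in correctly extracting the exponent $m$ and the parity conditions from Proposition~\ref{pro:Rump}.
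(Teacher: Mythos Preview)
Your proof is correct, and it takes a genuinely different route from the paper's. The paper first invokes Rump's primary decomposition \cite[Proposition~3]{MR2298848} to write $A=\prod_p A_p$ as a product of ideals, then applies the classification lemma and Proposition~\ref{pro:Rump} separately to the odd part $\prod_{p>2}A_p$ (bicyclic, hence some $C_{(n,d)}$) and to the $2$-part $A_2$ (non-bicyclic, hence $C_{(2^{m+1},2)}$). You instead classify the whole ring at once as $C_{(N,D)}$, read off the exponent constraints from the exceptional case of Proposition~\ref{pro:Rump}, and then split via the Chinese Remainder Theorem and an explicit rescaling $x\mapsto ax$ to normalize each factor. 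Your approach is more self-contained in that it avoids citing the external primary-decomposition result and makes the isomorphisms completely explicit; the paper's approach is shorter on the page because the normalization is absorbed into the classification of each local piece. Both are valid and of comparable difficulty.
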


\begin{proof}
    Let $A$ be a finite radical ring with cyclic additive group and abelian non-cyclic multiplicative group. 
    By \cite[Proposition 3]{MR2298848}, the skew brace $A$ admits a decomposition 
    $A=\prod_pA_p$, where
    \[
    A_p=\{a\in A:p^ma=0\text{ for some $m\geq1$}\}
    \]
    are
    ideals of $A$. By Proposition \ref{pro:Rump}, 
    $A_p$ is bycylic for every $p>2$. Thus $\prod_{p>2}A_p$ is bycylic and hence
    $\prod_{p>2}A_p=C_{(n,d)}$ for some odd integer $n$ and $d$ dividing $n$. 
    Note that $A_2$ is a radical ring with cyclic additive
    group of order a power of two. 
    Since $A$ is not bycyclic, 
    $A_2$ is not bicyclic. By Proposition \ref{pro:Rump}, 
    $A_2=C_{(2^{m+1},2)}$ for some $m\geq1$.  
\end{proof}

We will conclude the study of the Schur multiplier of cyclic radical rings by computing the
Schur multiplier of the direct product of two such rings.

\begin{lem}
\label{lem:identification}
    Let $N$ and $T$ be finite skew braces. Then 
    \[
    \Hom(N,\Hom(T,\C^{\times}))\simeq N^{\ab}\otimes T^{\ab}.
    \]
\end{lem}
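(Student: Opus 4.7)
The plan is to reduce the statement to a chain of standard identifications, using (i) that any skew brace homomorphism to a trivial abelian skew brace factors through the abelianization, and (ii) Pontryagin duality plus tensor–hom adjunction for finite abelian groups.

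First I would establish the following auxiliary fact: for any skew brace $M$ and any abelian group $L$ (viewed as a trivial skew brace), every skew brace homomorphism $f\colon M\to L$ factors uniquely through $M^{\ab}=M/M'$. Indeed, since the two operations on $L$ coincide, $f(x+y)=f(x\circ y)$ for all $x,y\in M$, which gives $f(x*y)=f(-x+x\circ y-y)=1$, so $f$ vanishes on $M^{(2)}=M*M$; and being a group homomorphism from $(M,+)$ to an abelian group, $f$ also vanishes on $[M,M]_+$. Hence $f$ vanishes on $M'=\langle M^{(2)},[M,M]_+\rangle$ and descends to $M^{\ab}$. This step is the only place where the skew brace structure really enters.

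Applying this fact to $M=T$ and $L=\C^{\times}$, together with Pontryagin duality for finite abelian groups, gives
\[
\Hom(T,\C^{\times})\;\cong\;\Hom(T^{\ab},\C^{\times})\;\cong\;T^{\ab}.
\]
In particular $\Hom(T,\C^{\times})$ is a finite abelian group viewed as a trivial skew brace, so applying the same factoring principle once more to the outer $\Hom$ yields
\[
\Hom\bigl(N,\Hom(T,\C^{\times})\bigr)\;\cong\;\Hom(N^{\ab},T^{\ab}),
\]
an isomorphism of abelian groups.

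Finally, I would finish with a purely abelian-group computation. Using Pontryagin duality once more and the tensor–hom adjunction in the category of abelian groups,
\[
\Hom(N^{\ab},T^{\ab})\;\cong\;\Hom\bigl(N^{\ab},\Hom(T^{\ab},\C^{\times})\bigr)\;\cong\;\Hom\bigl(N^{\ab}\otimes T^{\ab},\C^{\times}\bigr)\;\cong\;N^{\ab}\otimes T^{\ab},
\]
where finiteness of $N$ and $T$ (hence of $N^{\ab}\otimes T^{\ab}$) makes the last duality applicable. Composing the displayed isomorphisms yields the claimed identification. The only potential obstacle is keeping track of which $\Hom$ is taken in the category of skew braces and which in the category of abelian groups; once the reduction to abelianizations is done, everything else is formal.
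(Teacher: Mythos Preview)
Your proof is correct and follows essentially the same route as the paper's: both reduce to $\Hom(N^{\ab},\Hom(T^{\ab},\C^{\times}))$ by showing that skew brace homomorphisms into a trivial abelian skew brace kill the commutator ideal, and then invoke a standard identification for finite abelian groups. The only cosmetic difference is that the paper cites a lemma from Karpilovsky for the final step, whereas you spell it out via Pontryagin duality and tensor--hom adjunction.
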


\begin{proof}
    Let $\pi_T\colon T\to T^{\ab}$ be the canonical 
    skew brace homomorphism. Let $A$ be a multiplicative abelian group. 
    Routine calculations show that the map
    \[
    \Hom(T^{\ab},A)\to\Hom(T,A),
    \quad
    f\mapsto f\pi_T,
    \]
    is a well-defined bijective homomorphism of abelian groups. The surjectivity
    comes from the fact that if $g\in\Hom(T,A)$, then $g|_{T'}=1$ because,
    as $A$ is a trivial abelian skew brace, $g(t*s)=1$ and $g(t+s-t-s)=1$ for all $t,s\in T$. 
    
    Similarly, $\Hom(N^{\ab},A)\simeq\Hom(N,A)$. 
    Now 
    \begin{align*}
    \Hom(N,\Hom(T,\C^\times))&\simeq 
    \Hom(N,\Hom(T^{\ab},\C^\times))\\
    &\simeq \Hom(N^{\ab},\Hom(T^{\ab},\C^\times))
    \end{align*}
    and hence the claim follows from \cite[Lemma 2.2.9]{MR1200015}. 
\end{proof}

\begin{lem}
\label{FStrick}
Let $Q$ be a finite skew brace and $(1,f)\in Z_b(Q,\C^{\times})$. 
If $x,y,z,u\in Q$ are such that 
 \begin{align*}
     x\circ y = y\circ x,
     &&x\circ z = x+z, 
     &&y\circ u = y+u,
 \end{align*}
then
\[
f(x\circ y, z+u)= f(x,z)f(y,z)f(x,u)f(y,u).
\]
\end{lem}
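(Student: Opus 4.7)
The plan is to derive the identity by combining the right-linearity of $f$ (in its second slot) with the conditional left-linearity (in its first slot) supplied by Proposition \ref{pro:bilinearfactorset}, using the hypothesis $x\circ y=y\circ x$ to overcome an asymmetry.

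First, I would recall what Proposition \ref{pro:bilinearfactorset} gives us when applied to the factor set $(1,f)$: unconditionally, $f(a,b+c)=f(a,b)f(a,c)$ for all $a,b,c\in Q$, and moreover $f(a\circ b,c)=f(a,c)f(b,c)$ whenever $b+c=b\circ c$. Note the asymmetry in the left-linearity condition: the compatibility hypothesis involves the \emph{inner} argument $b$ of $a\circ b$ together with $c$, not $a$ with $c$.

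Second, I would apply the unconditional right-linearity to split the second argument:
\[
f(x\circ y,\,z+u)=f(x\circ y,z)\,f(x\circ y,u).
\]
For the factor $f(x\circ y,u)$, the hypothesis $y\circ u=y+u$ is exactly the compatibility needed to apply left-linearity, yielding $f(x\circ y,u)=f(x,u)f(y,u)$. For the factor $f(x\circ y,z)$, the direct hypothesis $x\circ z=x+z$ is the \emph{wrong} one for left-linearity as stated; this is the one subtle point. I would resolve it by using the commutativity hypothesis $x\circ y=y\circ x$ to rewrite
\[
f(x\circ y,z)=f(y\circ x,z),
\]
after which $x\circ z=x+z$ is precisely the compatibility needed to get $f(y\circ x,z)=f(y,z)f(x,z)$.

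Finally, multiplying the two expressions and using commutativity of $\C^\times$ gives
\[
f(x\circ y,\,z+u)=f(x,z)f(y,z)f(x,u)f(y,u),
\]
as required. There is no serious obstacle; the only delicate point is recognising the asymmetric form of the left-linearity in Proposition \ref{pro:bilinearfactorset} and invoking $x\circ y=y\circ x$ to swap the two factors so that the available compatibility hypothesis can be used.
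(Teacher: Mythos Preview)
Your proof is correct and follows essentially the same approach as the paper. The paper also splits $f(x\circ y,z+u)=f(x\circ y,z)f(x\circ y,u)$ via \eqref{eq:FS5}, uses the commutativity $x\circ y=y\circ x$ on the first factor, and then expands each factor; the only cosmetic difference is that the paper appeals to \eqref{eq:FS4} and \eqref{eq:FS5} directly rather than citing Proposition~\ref{pro:bilinearfactorset}, whose content is precisely those two equations specialized to $(1,f)$.
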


\begin{proof}
    By \eqref{eq:FS5},
    $f(x\circ y, z+u)=f(x\circ y, z)f(x\circ y, u)$. 
    Using \eqref{eq:FS4} and \eqref{eq:FS5}, 
    \begin{align*}
        f(x\circ y, z)=f(y\circ x, z)&=f(x,z)f(y,x\circ z)f(y,x)^{-1}
        = f(x,z)f(y,z)
    \end{align*}
    Simillarly $f(x\circ y, u)= f(x,u)f(y,u)$.
\end{proof}

\begin{lem}
\label{lem:dirprod1}
    Let $N$ and $T$ be finite skew braces and $(1,f)\in P_{b}(N\times T,\C^{\times})$. Then 
    \[
    ((1,f_N),f_{T\times N},(1,f_T))\in P_{b}(N,\C^{\times})\times (T^{\ab}\otimes N^{\ab})\times P_{b}(T,\C^{\times}),
    \]
    where $f_N=f|_{N\times N}$, $f_T=f|_{T\times T}$ and $f_{T\times N}=f|_{T\times N}$. 
\end{lem}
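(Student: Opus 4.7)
The plan is to verify the three claimed memberships in turn, the last being the real content. For the restrictions, since $(1,f)\in P_b(N\times T,\C^\times)$, there exist maps $h,k\colon N\times T\to\C^\times$ with $\partial_+h=1$ and $\partial_\circ k=f$. Restricting $h$ and $k$ to the sub-skew-braces $N\times\{0\}$ and $\{0\}\times T$ immediately produces $+$-homomorphisms $h_N,h_T$ and maps $k_N,k_T$ satisfying $\partial_\circ k_N=f_N$ and $\partial_\circ k_T=f_T$, since the skew brace operations on the direct product restrict componentwise. The factor set axioms \eqref{eq:FS1}--\eqref{eq:FS5} for $(1,f)$ restrict verbatim, so $(1,f_N)\in P_b(N,\C^\times)$ and $(1,f_T)\in P_b(T,\C^\times)$.

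For the middle factor, I would use Lemma~\ref{lem:identification} to view $T^{\ab}\otimes N^{\ab}$ as $\Hom(T,\Hom(N,\C^\times))$, and then show that the map $\phi\colon T\to \Hom(N,\C^\times)$ defined by $\phi(t)(n)=f((0,t),(n,0))$ is a skew brace homomorphism whose values are themselves skew brace homomorphisms $N\to \C^\times$. Because $\C^\times$ is a trivial abelian skew brace, this reduces to four bilinearity identities for $f_{T\times N}$: additivity and multiplicativity in each of the two variables. The key observation is the symmetry
\[
f((0,t),(n,0))=f((n,0),(0,t)),
\]
which comes from the pre-coboundary structure: both sides equal $k((n,0))k((n,t))^{-1}k((0,t))$ after using that $(0,t)\circ(n,0)=(n,t)=(n,0)\circ(0,t)$ in the direct product and that $\C^\times$ is abelian. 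Granted this, additivity in $n$ is immediate from the right-additivity \eqref{eq:rightlin}; multiplicativity in $t$ follows from \eqref{eq:leftlin} applied to $(0,t_1),(0,t_2),(n,0)$, where the hypothesis $y+z=(n,t_2)=y\circ z$ holds automatically because the two components sit in disjoint factors of the direct product; and the two remaining identities (additivity in $t$ and multiplicativity in $n$) are obtained by first invoking the symmetry and then reducing to \eqref{eq:rightlin} or \eqref{eq:leftlin}, respectively.

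The main obstacle is left-additivity $\phi(t_1+t_2)(n)=\phi(t_1)(n)\phi(t_2)(n)$, because the factor set axioms supply only right-additivity in the second argument, not the first. The symmetry identity above is exactly what converts left-additivity in the first argument of $f$ into right-additivity in the second, making the argument go through cleanly. Without the pre-coboundary hypothesis---which is essential in producing the symmetry via the common value $k((n,0))k((n,t))^{-1}k((0,t))$---one would need to fall back on a more involved derivation using \eqref{eq:FS4} together with Lemma~\ref{FStrick} to express $f((n_1,t),(n_2,0))$ in terms of $f_N$ and $f_{T\times N}$, so routing everything through the symmetry gives a uniform and shorter argument.
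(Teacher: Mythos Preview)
Your proof is correct and follows the same overall strategy as the paper: establish the symmetry $f_{T\times N}(t,n)=f_{N\times T}(n,t)$ from the pre-coboundary condition $f=\partial_\circ k$, then use it together with the right-additivity \eqref{eq:rightlin} to obtain additivity in each variable, and finally prove multiplicativity from the factor-set axioms.

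The one genuine difference is in the multiplicativity step. You apply \eqref{eq:leftlin} from Proposition~\ref{pro:bilinearfactorset} directly, once to $f_{T\times N}$ and once (after the symmetry swap) to $f_{N\times T}$, checking in each case that the auxiliary hypothesis $y+z=y\circ z$ holds because the two arguments lie in complementary factors. The paper instead computes $f((n_1,t_1),(n_2,t_2))$ in two ways---once via $f=\partial_\circ h$ and the decomposition $h(n,t)=h(n)f_{T\times N}(t,n)^{-1}h(t)$, once via Lemma~\ref{FStrick}---and reads off the identity $f_{T\times N}(t_1\circ t_2,n_1\circ n_2)=f_{T\times N}(t_1,n_1)f_{T\times N}(t_2,n_2)f_{T\times N}(t_1,n_2)f_{T\times N}(t_2,n_1)$ by comparison. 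Your route is shorter and avoids the general formula for $f$ on the whole product; the paper's route has the mild bonus of producing that formula, which is exactly what is needed for the inverse construction in Lemma~\ref{lem:dirprod2}.
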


\begin{proof}
Let $f=\partial_{\circ}h$. Then $f_N=\partial_{\circ}(h|_N)$ and $f_T=\partial_{\circ}(h|_T)$. In addition, 
$(1,f_N)$ and $(1,f_T)$ 
are factor set as 
$f_N$ and $f_T$ are 
restrictions of $f$. It is left to see that $f_{T\times N}\in N^{\ab}\otimes T^{\ab}$.  
Notice that
\begin{equation}
\label{eq:NxT}
h(n)f_{T\times N}(t,n)^{-1}h(t)= h(n,t) = h(n)f_{N\times T}(n,t)^{-1}h(t)
\end{equation}
where $f_{N\times T}$ denotes the restriction of $f$ to $(N\times \lbrace 0\rbrace)\times (\lbrace 0\rbrace \times T)$. In particular,
$f_{T\times N}(t,n)=f_{N\times T}(n,t)$. It follows that  
both $f_{T\times N}$ and $f_{N\times T}$  
are additive in each component.

We now show that $f_{T\times N}$ is multiplicative in both components. 
On the one hand,
using \eqref{eq:NxT} and the fact that $f=\partial_{\circ}h$, 
\begin{multline*}
    f\left((n_1,t_1),(n_2,t_2)\right)
    = f_N(n_1,n_2)f_T(t_1,t_2)
    f_{T\times N}(t_1,n_1)^{-1}\\
    f_{T\times N}(t_1\circ t_2,n_1\circ n_2)f_{T\times N}(t_2,n_2)^{-1}
\end{multline*}
On the other hand, by Lemma \ref{FStrick}, 
\begin{align*}
    f\left((n_1,t_1),(n_2,t_2)\right)= f_N(n_1,n_2)f_T(t_1,t_2)f_{T\times N}(t_1,n_2)f_{T\times N}(t_2,n_1).
\end{align*}
Therefore 
\begin{align*}
f_{T\times N}(t_1\circ t_2, n_1\circ n_2)
=f_{T\times N}(t_1,n_1)f_{T\times N}(t_2,n_2)f_{T\times N}(t_1,n_2)f_{T\times N}(t_2,n_1).
\end{align*}
This finishes the proof. 
\end{proof}

\begin{lem}
\label{lem:dirprod2}
Let $N$ and $T$ be finite skew braces.  
Let 
\[
((1,f_N),f_{N\times T},(1,f_T))\in P_{b}(N,\C^{\times})\times (T^{\ab}\otimes N^{\ab})\times P_{b}(T,\C^{\times}).
\]
Then the map $f\colon(N\times T)\times (N\times T)\to\C^{\times}$ defined by 
\[
f\left((n_1,t_1),(n_2,t_2)\right)= f_N(n_{1},n_2)f_{T\times N}(t_2,n_1)f_{T\times N}(t_1,n_2)f_T(t_1,t_2)
\]
is such that $(1,f)\in P_{b}(N\times T,\C^{\times})$. 
\end{lem}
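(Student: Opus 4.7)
The plan is to realize $(1,f)$ as a pre-coboundary by exhibiting an explicit $h\colon N\times T\to\mathbb{C}^\times$ with $\partial_\circ h = f$ (the additive component is $\partial_+$ of the constant function $1$), and then to verify the factor-set axioms \eqref{eq:FS1}--\eqref{eq:FS5} so that $(1,f)\in Z^2_b(N\times T,\mathbb{C}^\times)$.

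First, since $(1,f_N)\in P_b(N,\mathbb{C}^\times)$ and $(1,f_T)\in P_b(T,\mathbb{C}^\times)$, choose maps $h_N\colon N\to\mathbb{C}^\times$ and $h_T\colon T\to\mathbb{C}^\times$ with $h_N(0)=h_T(0)=1$ and $f_N=\partial_\circ h_N$, $f_T=\partial_\circ h_T$. Motivated by the identification used in the proof of Lemma~\ref{lem:dirprod1}, define
\[
h\colon N\times T\to\mathbb{C}^\times,\qquad h(n,t)=h_N(n)\,h_T(t)\,f_{T\times N}(t,n)^{-1},
\]
which satisfies $h(0,0)=1$ (using that $f_{T\times N}(0,0)=1$).

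Next, I would verify $\partial_\circ h = f$. Expanding $\partial_\circ h((n_1,t_1),(n_2,t_2))$ using the componentwise operation $(n_1,t_1)\circ(n_2,t_2)=(n_1\circ n_2,t_1\circ t_2)$, the $h_N$-factors collapse to $\partial_\circ h_N(n_1,n_2)=f_N(n_1,n_2)$ and the $h_T$-factors collapse to $\partial_\circ h_T(t_1,t_2)=f_T(t_1,t_2)$. By Lemma~\ref{lem:identification}, $f_{T\times N}$ corresponds to an element of $T^{\ab}\otimes N^{\ab}$, so it is bi-multiplicative, and expansion of $f_{T\times N}(t_1\circ t_2,n_1\circ n_2)$ into four factors yields that the remaining $f_{T\times N}$-contribution simplifies exactly to $f_{T\times N}(t_1,n_2)\,f_{T\times N}(t_2,n_1)$, matching the defining formula of $f$.

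Finally, I would verify that $(1,f)$ is a factor set. Equation \eqref{eq:FS3} is trivial because $\alpha\equiv 1$; conditions \eqref{eq:FS1}, \eqref{eq:FS2}, and \eqref{eq:FS4} hold because $f=\partial_\circ h$ is automatically a multiplicative $2$-coboundary with $h(0,0)=1$. The only substantive axiom is \eqref{eq:FS5}: substituting $\alpha\equiv 1$ into the equivalent form \eqref{eq:equivFS5} reduces it to $f(x,y+z)=f(x,y)f(x,z)$. This right-additivity follows componentwise from (i) $f_N$ and $f_T$ being additive in the second variable, itself a consequence of $(1,f_N)$ and $(1,f_T)$ satisfying \eqref{eq:FS5} on $N$ and $T$ (as in Proposition~\ref{pro:bilinearfactorset}), and (ii) the bi-additivity of $f_{T\times N}$, again from Lemma~\ref{lem:identification}. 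The main obstacle is purely bookkeeping: one must distinguish where bi-multiplicativity of $f_{T\times N}$ with respect to $\circ$ is invoked (in verifying $\partial_\circ h = f$) versus where bi-additivity is invoked (in verifying \eqref{eq:FS5}), both of which are available because $f_{T\times N}$ factors through the abelianizations, where the two operations coincide.
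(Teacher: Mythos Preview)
Your proof is correct and follows essentially the same approach as the paper: you define the identical function $h(n,t)=h_N(n)\,h_T(t)\,f_{T\times N}(t,n)^{-1}$, verify $f=\partial_\circ h$, and then check that $f$ is additive in the second variable to obtain the factor-set condition \eqref{eq:FS5}. Your write-up is in fact more explicit than the paper's, which simply states that ``a straightforward calculation shows that $f=\partial_\circ h$ and that $f$ is additive in the second component.''
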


\begin{proof}
Let $f_N=\partial_{\circ}h_N$ and $f_T= \partial_{\circ}h_T$. Let  
\[
h\colon N\times T\to \C^{\times},
\quad
(n,t)\mapsto h_N(n)f_{T\times N}(t,n)^{-1}h_T(t).
\]
A straightforward calculation shows that 
$f=\partial_{\circ}h$ and that 
$f$ is additive in the second component. 
\end{proof}

We will use the identification of Lemma \ref{lem:identification}. 

\begin{thm}
\label{thm:Kb_directproduct}
Let $N$ and $T$ be finite skew braces. Then 
\[ 
S(N\times T,\C^{\times})\simeq S(N,\C^{\times})\times (T^{\ab}\otimes N^{\ab})\times S(T,\C^{\times}).
\]
\end{thm}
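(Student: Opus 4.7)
The plan is to package Lemmas \ref{lem:dirprod1} and \ref{lem:dirprod2} into mutually inverse group homomorphisms between the two sides. Every class in $S(N\times T,\C^\times)$ has a representative $(1,f)\in P_b(N\times T,\C^\times)$, so I would define
\[
\Phi\colon S(N\times T,\C^\times)\to S(N,\C^\times)\times (T^{\ab}\otimes N^{\ab})\times S(T,\C^\times),\quad
\overline{(1,f)}\mapsto\bigl(\overline{(1,f_N)},\,f_{T\times N},\,\overline{(1,f_T)}\bigr),
\]
using the restrictions from Lemma \ref{lem:dirprod1}. In the opposite direction, Lemma \ref{lem:dirprod2} gives directly a map $\Psi$ assembling a triple into $\overline{(1,f)}$. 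Both are evidently group homomorphisms because the operations on $Z_b^2$ and on the tensor product are pointwise multiplication, and the formula of Lemma \ref{lem:dirprod2} is multiplicative in each input.

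To check that $\Phi$ is well-defined, suppose $(1,f_1)$ and $(1,f_2)$ are cohomologous via some $h\colon N\times T\to\C^\times$ with $h(0)=1$, so $\partial_+h=1$ and $\partial_\circ h=f_1f_2^{-1}$. The first condition forces $h$ to be additively multiplicative, hence $h(n,t)=h_N(n)h_T(t)$ with $h_N=h|_N$ and $h_T=h|_T$. Restricting $\partial_\circ h$ to $N\times N$ and $T\times T$ produces $\partial_\circ h_N$ and $\partial_\circ h_T$ respectively, so the two outer components of $\Phi$ are well defined. On $T\times N$, the identity $(0,t)\circ(n,0)=(n,t)$ together with the multiplicative decomposition of $h$ gives $(\partial_\circ h)|_{T\times N}=1$, so the middle component $f_{T\times N}\in T^{\ab}\otimes N^{\ab}$ is unaffected. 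Well-definedness of $\Psi$ is symmetric: adjusting one of the $(1,f_N)$, $(1,f_T)$ by a pre-coboundary changes the assembled $f$ by $\partial_\circ$ of the trivial extension of the corresponding $h_N$ or $h_T$, which is again a pre-coboundary, and the tensor entry is insensitive to its choice of representative.

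For mutual inversion, $\Phi\circ\Psi=\id$ follows by plugging pairs of the form $((n,0),(n',0))$, $((0,t),(0,t'))$ and $((0,t),(n,0))$ into the defining formula of Lemma \ref{lem:dirprod2} and reading off the three restrictions. The converse identity $\Psi\circ\Phi=\id$ is the substantive step: one must show that any representative $(1,f)\in P_b(N\times T,\C^\times)$ is already reconstructed from its three restrictions. This is precisely the content of the formula
\[
f\bigl((n_1,t_1),(n_2,t_2)\bigr)=f_N(n_1,n_2)\,f_T(t_1,t_2)\,f_{T\times N}(t_1,n_2)\,f_{T\times N}(t_2,n_1)
\]
derived in the proof of Lemma \ref{lem:dirprod1} via Lemma \ref{FStrick}. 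The hard part is thus already absorbed into Lemma \ref{FStrick}: it is what lets one express the ``off-diagonal'' values $f((n,0),(0,t))$, $f((0,t),(n,0))$ and $f((n_1,t_1),(n_2,t_2))$ as products of the $N\times N$, $T\times T$ and $T\times N$ pieces. Once this is granted, the two compositions agree with the identity on representatives and the theorem follows.
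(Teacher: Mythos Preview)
Your proposal is correct and follows essentially the same approach as the paper: both define the two maps via Lemmas \ref{lem:dirprod1} and \ref{lem:dirprod2}, verify well-definedness by the same argument (an additive homomorphism $h$ splits as $h_N\cdot h_T$, so the $T\times N$ restriction of $\partial_\circ h$ vanishes), and conclude that the maps are mutually inverse. Your write-up is in fact slightly more explicit than the paper's, which simply asserts ``It is clear that the maps $\psi$ and $\phi$ are inverse to each other'' without spelling out the reconstruction formula you extracted from the proof of Lemma \ref{lem:dirprod1}.
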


\begin{proof}
Consider the maps  
\[
\phi\colon S(N,\C^{\times})\times (T^{\ab}\otimes N^{\ab})\times S(T,\C^{\times})\rightarrow  S(N\times T,\C^{\times})
\]
given by $\phi(\overline{(1,f_N)},f_{T\times N},\overline{(1,f_T)})=\overline{(1,f)}$, with $f$ as in Lemma \ref{lem:dirprod2},
and
\[
\psi\colon S(N\times T,\C^{\times})\rightarrow  S(N,\C^{\times})\times (T^{\ab}\otimes N^{\ab})\times S(T,\C^{\times})
\]
given by $\psi(\overline{(1,f)})=\left(\overline{(1,f_N)},f_{T\times N},\overline{(1,f_T)}\right)$ with maps 
$f_N$, $f_T$ and $f_{T\times N}$ as 
in Lemma \ref{lem:dirprod1}.

We first prove that $\phi$ is well-defined. Let 
\[
(\overline{(1,f_N)},f_{T\times N},\overline{(1,f_T)})=(\overline{(1,g_N)},g_{T\times N},\overline{(1,g_T)}).
\]
Then there exist $h\in\Hom(N_+, \C^{\times})$ and $k\in\Hom(T_+, \C^{\times})$ such that $g_N=f_N\partial_{\circ}h$ 
and $g_T=f_T\partial_{\circ}k$.
Let 
\[
l\colon N\times T\to \C^{\times}, (n,t)\mapsto h(n)k(t)
\]
Thus $l\in\Hom((N\times T)_+, \C^{\times})$ and
\[
    g((n_1,t_1),(n_2,t_2))= f((n_1,t_1),(n_2,t_2))\partial_{\circ}l((n_1,t_1),(n_2,t_2)).
\]

Now we prove that $\psi$ is well defined. Let $\overline{(1,f)}=\overline{(1,g)}$. Then there 
exists $l\in\Hom((N\times T)_+, \C^{\times})$ such that $g=f\partial_{\circ}l$. 
Let $h=l|_N$ and $k=l|_T$. Then $h\in\Hom(N_+, \C^{\times})$, $k\in\Hom(T_+, \C^{\times})$, $g_N=f_N\partial_{\circ}h$ and $g_T=f_T\partial_{\circ}h$. In addition, since $\partial_{\circ}l(n,t)=\partial_+l(n,t)=1$, we have $g_{T\times N}=f_{T\times N}$.

It is clear that the maps $\psi$ and $\phi$ are inverse to each other.
\end{proof}

\begin{thm}
\label{thm:Mb(NxT)}
    Let $N$ and $T$ be skew braces such that 
    $\sigma(N)$ and $\sigma(T)$ 
    split. Assume that 
    $T_+^{\ab}\otimes N_+^{\ab} = 0$ and $T_\circ^{\ab}\otimes N_\circ^{\ab} = 0$. Then 
    $\sigma(N\times T)$ splits. In particular, $M_b(N\times T) \simeq M_b(N)\times M_b(T)$.
\end{thm}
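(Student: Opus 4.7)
The plan is to compute each of the three terms of $\sigma(N\times T)$ explicitly using Theorem \ref{thm:Kb_directproduct} and the Schur--K\"uneth formula (Theorem \ref{thm:directproduct}), and then construct a section of the map $\delta$ by inflating the given splittings $s_N$ and $s_T$ along the canonical projections $\pi_N\colon N\times T\to N$ and $\pi_T\colon N\times T\to T$.

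First, since the skew brace abelianization $T^{\ab}$ is a trivial skew brace, there is a canonical surjection $T_+^{\ab}\twoheadrightarrow T^{\ab}$ (its kernel is the image of $T'/[T,T]_+$), and similarly for $N$. Hence the hypothesis $T_+^{\ab}\otimes N_+^{\ab}=0$ forces $T^{\ab}\otimes N^{\ab}=0$, and Theorem \ref{thm:Kb_directproduct} collapses to
\[
S(N\times T,\C^{\times})\simeq S(N,\C^{\times})\times S(T,\C^{\times}).
\]
Likewise, the Schur--K\"uneth formula, together with the two hypotheses on the group abelianizations, gives
\[
M\bigl((N\times T)_+\bigr)\simeq M(N_+)\times M(T_+),\qquad M\bigl((N\times T)_\circ\bigr)\simeq M(N_\circ)\times M(T_\circ).
\]

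Next, the projections $\pi_N$ and $\pi_T$ are skew brace homomorphisms, so pulling back factor sets yields group homomorphisms $\pi_N^*\colon M_b(N)\to M_b(N\times T)$ and $\pi_T^*\colon M_b(T)\to M_b(N\times T)$. I would then define
\[
s(a,b,c,d)=\pi_N^*\bigl(s_N(a,c)\bigr)\cdot\pi_T^*\bigl(s_T(b,d)\bigr)
\]
for $(a,b,c,d)\in M(N_+)\times M(T_+)\times M(N_\circ)\times M(T_\circ)$, and verify that $\delta\circ s=\mathrm{id}$. The key observation is that if $(\alpha_N,\mu_N)$ represents $s_N(a,c)$, then its pullback $((n_1,t_1),(n_2,t_2))\mapsto (\alpha_N(n_1,n_2),\mu_N(n_1,n_2))$ restricts to $(\alpha_N,\mu_N)$ on the $N$-component and to the trivial cocycle on both the $T$-component and on the mixed bilinear term; under the Schur--K\"uneth decomposition (whose middle tensor factors vanish by hypothesis) it contributes exactly $(a,1,c,1)$. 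The analogous computation for $\pi_T^*(s_T(b,d))$ contributes $(1,b,1,d)$, and multiplying yields $\delta(s(a,b,c,d))=(a,b,c,d)$. This simultaneously establishes the surjectivity of $\delta$ and the splitting of $\sigma(N\times T)$.

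For the last assertion, combining the splittings of $\sigma(N)$, $\sigma(T)$ and $\sigma(N\times T)$ with the isomorphisms above yields
\begin{align*}
M_b(N\times T)&\simeq S(N\times T,\C^{\times})\times M\bigl((N\times T)_+\bigr)\times M\bigl((N\times T)_\circ\bigr)\\
&\simeq\bigl(S(N,\C^{\times})\times M(N_+)\times M(N_\circ)\bigr)\times\bigl(S(T,\C^{\times})\times M(T_+)\times M(T_\circ)\bigr)\\
&\simeq M_b(N)\times M_b(T)
\end{align*}
after rearranging factors. The main obstacle is the explicit cocycle tracking in the middle paragraph: one must verify that the pullback of a representative cocycle along $\pi_N$ (or $\pi_T$) really lands in the expected summand of the Schur--K\"uneth decomposition and contributes nothing to the $T^{\ab}\otimes N^{\ab}$ pieces. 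This is a short but careful check on representatives; once it is in place, every other step is a rearrangement of the isomorphisms obtained in the first paragraph.
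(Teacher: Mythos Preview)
Your proposal is correct and follows essentially the same approach as the paper's proof. The paper constructs the section of $\delta$ by writing down the explicit product cocycle $\alpha((n,t),(n_1,t_1))=\beta_1(n,n_1)\beta_2(t,t_1)$ (and similarly for $\mu$), verifies well-definedness on cohomology classes, and then checks $\delta s=\id$ via the Schur--K\"uneth identification; your pullbacks $\pi_N^*$ and $\pi_T^*$ produce exactly these cocycles, so the two arguments are the same construction in slightly different language.
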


\begin{proof}
    By hypothesis and Theorem \ref{thm:directproduct},
    \[
    M((N\times T)_+)\times M((N\times T)_\circ)\simeq M(N_+)\times M(N_\circ)\times M(T_+)\times M(T_\circ).
    \]
    Let $s_1$ and $s_2$ be sections of $\sigma(N)$ and $\sigma(T)$, respectively. Let 
    \[
    (\overline{\alpha_1},\overline{\mu_1},\overline{\alpha_2},\overline{\mu_2})\in M(N_+)\times M(N_\circ)\times M(T_+)\times M(T_\circ),
    \]
    and $(\beta_1,\sigma_1)$ and 
    $(\beta_2,\sigma_2)$ be representatives of $s_1(\overline{\alpha_1},\overline{\mu_1})$ and $s_2(\overline{\alpha_2},\overline{\mu_2})$, respectively. 
    Let $(\alpha,\mu)$ be such that
    \begin{equation}
    \label{eq:alpha_mu}
    \begin{aligned}
    \alpha((n,t),(n_1,t_1))&= \beta_1(n,n_1)\beta_2(t,t_1),\\
    \mu((n,t),(n_1,t_1))&=\sigma_1(n,n_1)\sigma_2(t,t_1).
    \end{aligned}
    \end{equation}
    Straightforward computations show that $(\alpha,\mu)$ is a factor set of $N\times T$. 
    We claim that the cohomology class of $(\alpha,\mu)$ does not depend on the 
    representatives of $s_1(\overline{\alpha_1},\overline{\mu_1})$ and $s_2(\overline{\alpha_2},\overline{\mu_2})$. 
    Suppose there are maps 
    \[
    h\colon N\to \C^{\times},
    \quad
    k\colon T\to \C^{\times}
    \]
    such that $h(0)=1=k(0)$, $(\gamma_1, \eta_1)=(\beta_1\partial_+h,\sigma_1\partial_{\circ}h)$ and $(\gamma_2, \eta_2)=(\beta_2\partial_+k,\sigma_2\partial_{\circ}k)$. Then the map 
    \[
    l\colon N\times T\to \C^{\times},\quad l(n,t)=h(n)k(t)
    \]
    is such that the pair of maps built from $(\gamma_1, \eta_1)$ and $(\gamma_2, \eta_2)$ 
    as in Equalities \eqref{eq:alpha_mu} is $(\alpha\partial_+l,\mu\partial_{\circ}l)$. This yields a map 
    \begin{align*}
    s\colon M(N_+)\times M(N_\circ)\times M(T_+)\times M(T_\circ)&\to M_b(N\times T),\\
    (\overline{\alpha_1},\overline{\mu_1},\overline{\alpha_2},\overline{\mu_2})&\mapsto\overline{(\alpha,\mu)}
    \end{align*}
    One can see that, by definition, $\alpha$ and $\mu$ are normal factor set of groups. 
    Consider the sequence $S(N\times T,\iota,\delta)$. 
    The identification of Theorem \ref{thm:directproduct} shows that 
    \begin{align*}
    \delta\left(\overline{(\alpha,\mu)}\right)
    =\left(\overline{\alpha|_{N\times N}},\overline{\mu|_{N\times N}},\overline{\alpha|_{T\times T}},\overline{\mu|_{T\times T}}\right)
    =(\overline{\alpha_1},\overline{\mu_1},\overline{\alpha_2},\overline{\mu_2}).
    \end{align*}
    By Theorem \ref{thm:Kb_directproduct}, 
    \[
    M_b(N\times T) \simeq M_b(N)\times (T^{\ab}\otimes N^{\ab})\times M_b(T).
    \]
    Since $T_+^{\ab}\otimes N_+^{\ab} = 0$ and  
    $T^{\ab}$ and $N^{\ab}$ are  quotients of $T_+^{\ab}$ and $N_+^{\ab}$, respectively, it follows that 
    $T^{\ab}\otimes N^{\ab}=0$.
\end{proof}

The following result is an immediate consequence Theorem \ref{thm:Mb(NxT)}
and the results of this section. 

\begin{cor}
    \label{Mbcyclicnonbicyclic}
    Let $n\geq1$ odd, $d$ a divisor of $n$ and $m\geq1$, then 
    \[
    M_b(C_{(n,d)}\times C_{(2^{m+1},2)})\simeq \Z/(2)\times\Z/(2)\times \Z/(d).
    \]
\end{cor}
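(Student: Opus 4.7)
The plan is to apply Theorem \ref{thm:Mb(NxT)} with $N = C_{(n,d)}$ and $T = C_{(2^{m+1},2)}$. Once its three hypotheses are verified, the theorem together with Corollary \ref{cor:Schur_bicyclic} and Theorem \ref{thm:Schur_cyclic_abelian} immediately yields
\[
M_b(N\times T)\simeq M_b(N)\times M_b(T)\simeq \Z/(d)\times\Z/(2)\times\Z/(2),
\]
which is the desired formula. So the whole proof reduces to checking the three assumptions of Theorem \ref{thm:Mb(NxT)}: that $\sigma(N)$ and $\sigma(T)$ split, and that the two tensor products $T_+^{\ab}\otimes N_+^{\ab}$ and $T_\circ^{\ab}\otimes N_\circ^{\ab}$ vanish.

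For the splitting of $\sigma(N)$, I would use that $n$ is odd, so $4\nmid n$, and hence by Proposition \ref{pro:Rump} the skew brace $C_{(n,d)}$ is bicyclic. Then both $N_+$ and $N_\circ$ are cyclic, so $M(N_+)=M(N_\circ)=0$; the right-hand term of $\sigma(N)$ is zero and splitting is automatic (consistently, Theorem \ref{thm:precoboundariescycl} gives $M_b(N)=S^2(N)\simeq\Z/(d)$). For $\sigma(T)$, note that $T_+\simeq\Z/(2^{m+1})$ is cyclic, so $M(T_+)=0$ and $\sigma(T)$ coincides with the sequence $0\to S^2(T)\to M_b(T)\to M(T_\circ)\to 0$, whose splitting is exactly what is constructed in the proof of Theorem \ref{thm:Schur_cyclic_abelian}.

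For the vanishing of the two tensor products, both are coprimality arguments. Additively, $N_+^{\ab}\otimes T_+^{\ab}=\Z/(n)\otimes\Z/(2^{m+1})=0$ since $n$ is odd. Multiplicatively, bicyclicity of $C_{(n,d)}$ gives $N_\circ$ cyclic of odd order $n$, while Proposition \ref{pro:Rump} describes the exceptional case $T_\circ\simeq\Z/(2^m)\times\Z/(2)$; tensoring an odd-order abelian group against a $2$-group gives zero, so $N_\circ^{\ab}\otimes T_\circ^{\ab}=0$.

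No serious obstacle is expected: every ingredient has already been assembled. The only point requiring a small verification is that the splitting established in Theorem \ref{thm:Schur_cyclic_abelian} really is a splitting of $\sigma(T)$ (and not merely of $\sigma_\circ(T)$), but this is immediate from $M(T_+)=0$. Everything else is a direct bookkeeping of the isomorphisms furnished by Theorems \ref{thm:Mb(NxT)}, \ref{thm:Schur_cyclic_abelian} and Corollary \ref{cor:Schur_bicyclic}.
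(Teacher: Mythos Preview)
Your proof is correct and follows exactly the approach the paper intends: the corollary is stated as an immediate consequence of Theorem~\ref{thm:Mb(NxT)} together with Corollary~\ref{cor:Schur_bicyclic} and Theorem~\ref{thm:Schur_cyclic_abelian}, and you have simply unpacked the verification of the hypotheses (splitting of $\sigma(N)$ and $\sigma(T)$, vanishing of the tensor products via coprimality) that the paper leaves implicit. Your observation that the splitting in Theorem~\ref{thm:Schur_cyclic_abelian} upgrades from $\sigma_\circ(T)$ to $\sigma(T)$ because $M(T_+)=0$ is exactly the small point one must check, and you handled it correctly.
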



\subsection{Direct product of some bicyclic skew braces}

\begin{thm}
\label{thm:directproductbicyclic}
Let $N=C_{(m,d_1)}$ and $T=C_{(k,d_2)}$ be bicyclic skew braces. Let 
$g=\gcd(m,k)$. Assume that $g=\gcd(d_1,d_2)$. 
Then 
\[ 
M_{b}(N\times T)\simeq \Z/(d_1)\times \Z/(d_2)\times\Z/(g)^3
\]
\end{thm}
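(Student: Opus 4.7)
The strategy is to apply the sequence $\sigma(N\times T,\iota,\delta)$ of Notation \ref{not:sequence} to the datum $(N\times T,\C^\times)$ and show it is a split short exact sequence. The three resulting pieces assemble to give the desired decomposition. I would divide the argument into three steps.

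First I would compute $S^2(N\times T,\C^\times)$. Theorem \ref{thm:Kb_directproduct} reduces this to $S^2(N,\C^\times)\times(T^{\ab}\otimes N^{\ab})\times S^2(T,\C^\times)$. Theorem \ref{thm:precoboundariescycl} gives $S^2(N,\C^\times)\simeq\Z/(d_1)$ and $S^2(T,\C^\times)\simeq\Z/(d_2)$. For $A = C_{(n,d)}$ the identity $a*b = dab$ yields $A^{(2)} = d\,\Z/(n)$; since $A_+$ is abelian one has $A' = A^{(2)}$ and hence $A^{\ab}\simeq \Z/(d)$. Therefore $T^{\ab}\otimes N^{\ab}\simeq \Z/(d_1)\otimes\Z/(d_2)\simeq \Z/(g)$, and the total is $\Z/(d_1)\times\Z/(g)\times\Z/(d_2)$.

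Next, $(N\times T)_+\simeq \Z/(m)\times\Z/(k)$ and $(N\times T)_\circ$ is likewise a product of cyclic groups of orders $m$ and $k$. Theorem \ref{thm:directproduct} together with the triviality of the Schur multiplier of a cyclic group yields $M((N\times T)_+)\simeq M((N\times T)_\circ)\simeq \Z/(g)$, and in both cases the non-trivial class is represented by the bicharacter $((n_1,t_1),(n_2,t_2))\mapsto \zeta^{t_1 n_2}=:f(t_1,n_2)$, where $\zeta$ is a primitive $g$-th root of unity.

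The technical core is the production of a section of $\delta$. Because $g\mid d_1$ and $g\mid d_2$, the cross terms in $n\circ n' = n+n'+d_1 nn'$ and $t\circ t' = t+t'+d_2 tt'$ die in $\zeta$-exponents, so $f$ is biadditive in each slot with respect to both $+$ and $\circ$. Setting $\alpha_0 = \mu_0 = f$, I would show that both $(\alpha_0,1)$ and $(1,\mu_0)$ belong to $Z^2_b(N\times T,\C^\times)$. Properties \eqref{eq:FS1}--\eqref{eq:FS4} are immediate from biadditivity. The delicate verification is the compatibility \eqref{eq:FS5}: for $(1,\mu_0)$ it reduces to right-additivity of $f$, while for $(\alpha_0,1)$ the substitution $t\circ t' - t = t'(1+d_2 t)$ together with $n\circ n'' = n+n''+d_1 nn''$ collapses the condition, after cancellation of common factors, to $\zeta^{t'n''(d_1 n + d_2 t + d_1 d_2 tn)} = 1$, valid because $g$ divides both $d_1$ and $d_2$. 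Noting that $\alpha_0^g = \mu_0^g = 1$, the assignment $s(k,l) = \overline{(\alpha_0^k,\mu_0^l)}$ gives a well-defined group homomorphism $M((N\times T)_+)\times M((N\times T)_\circ)\to M_b(N\times T)$, and by construction $\delta\circ s = \id$.

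Combining the three ingredients yields $M_b(N\times T)\simeq \Z/(d_1)\times \Z/(d_2)\times \Z/(g)^3$. I expect the main obstacle to be the bookkeeping in \eqref{eq:FS5} for the lift $(\alpha_0,1)$: this is precisely where the compatibility between the additive and multiplicative parts of a skew-brace factor set intervenes, and the coprimality hypothesis $\gcd(d_1,d_2) = g$ is exactly what causes the cross terms involving $d_1$, $d_2$, and $d_1 d_2$ to vanish modulo $g$.
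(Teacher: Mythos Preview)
Your proposal is correct and follows essentially the same route as the paper: both arguments show that $\sigma(N\times T)$ is split exact, and both use the very same bicharacter $f((n_1,t_1),(n_2,t_2))=\xi^{t_1n_2}$ to build the section $s(k,l)=\overline{(f^k,f^l)}$. Your write-up is in fact more explicit than the paper's, which simply asserts that ``routine calculations'' verify the factor-set axioms; your computation of $A^{\ab}\simeq\Z/(d)$ and your reduction of \eqref{eq:FS5} for $(\alpha_0,1)$ to the vanishing of $t'n''(d_1n+d_2t+d_1d_2tn)$ modulo $g$ make the role of the hypothesis $g=\gcd(d_1,d_2)$ (hence $g\mid d_1$ and $g\mid d_2$) completely transparent.
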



\begin{proof}
It is enough to show that the sequence $S(N\times T,\iota,\delta)$ is exact and splits. 
By Theorem \ref{thm:directproduct},
\[
M((N\times T)_+)\times M((N\times T)_\circ)\simeq\Z/(g)^2.
\]
Let $\xi$ be a $g$-th primitive root of unity. Let 
\begin{gather*}
\phi\colon \Z/(g)^2\to M_{b}(N\times T),
\quad
(l,u)\mapsto \overline{(\alpha(l),\mu(u))},
\shortintertext{where}
\alpha(l)((n_1,t_1),(n_2,t_2))=\xi^{lt_1n_2},
\quad 
\mu(u)((n_1,t_1),(n_2,t_2))=\xi^{ut_1n_2}.
\end{gather*}
Routine calculations show that $(\alpha(l),\mu(u))$ are factor sets of skew braces and
that $\delta\phi=\id$.
\end{proof}
\begin{exa}
    For distinct odd prime numbers $p_1,\dots,p_k,q_1,\dots , q_l$, 
    let 
    \[
    d=p_1\dots p_kq_1\dots q_l.
    \]
    If  
    $m_1,\dots, m_k,r_1,\dots,r_l\geq 1$, then
    \[
    M_b\left(C_{(p_1^{m_1}\dots p_k^{m_k}q_1\dots q_l,d)}\times C_{(p_1\dots p_kq_1^{r_1}\dots q_l^{r_l},d)}\right)\simeq \left(\Z/(d)\right)^5.
    \]
\end{exa}

\subsection{Skew braces of order $p^2$}

\begin{pro}[Proposition 2.4 of \cite{MR3320237}]
\label{pro:Bachiller}
    Let $p$ be a prime number. Up to isomorphism, skew braces of abelian type 
    of order $p^2$ are either trivial, cyclic with abelian multiplicative group, or have additive group isomorphic to $\Z/(p)\times \Z/(p)$ and multiplicative group defined as 
    \[(x_1,x_2)\circ(y_1,y_2)=(x_1+y_1+x_2y_2, x_2+y_2).
    \]
    In the last case, the multiplicative group is isomorphic to $\Z/(p)\times\Z/(p)$ if $p$ is odd and $\Z/(4)$ otherwise.
\end{pro}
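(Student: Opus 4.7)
The strategy is to split into cases based on the isomorphism type of $A_+$, which, being abelian of order $p^2$, is either $\Z/(p^2)$ or $\Z/(p)\times\Z/(p)$; in either case $A_\circ$ is automatically abelian, since it has order $p^2$.

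If $A_+\simeq\Z/(p^2)$, the lemma on skew braces with cyclic additive group and abelian multiplicative group (preceding Proposition~\ref{pro:Rump}) identifies $A$ with some $C_{(p^2,d)}$, and the divisibility condition in Example~\ref{exa:A(n,d)} restricts $d$ to $\{p,p^2\}$. The case $d=p^2$ yields the trivial skew brace, while $d=p$ yields the non-trivial cyclic brace of the proposition.

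Suppose now $A_+\simeq\Z/(p)\times\Z/(p)$ and consider the lambda map $\lambda\colon A_\circ\to\Aut(A_+)\simeq\GL_2(\mathbb{F}_p)$. If $\lambda$ is trivial then $a\circ b=a+b$ and we recover the trivial brace. Otherwise $\Img\lambda$ is a non-trivial $p$-subgroup of $\GL_2(\mathbb{F}_p)$, necessarily of order $p$ and hence conjugate to the unipotent subgroup generated by $\bigl(\begin{smallmatrix}1 & 1\\ 0 & 1\end{smallmatrix}\bigr)$. Since $\Soc(A)=\ker\lambda$ is an ideal of order $p$, and any element of order $p$ in $\GL_2(\mathbb{F}_p)$ preserving a one-dimensional subspace must act trivially on it (otherwise its restriction would have order dividing $p-1$), one can choose a basis $\{e_1,e_2\}$ of $A_+$ with $\Soc(A)=\langle e_1\rangle$ and $\lambda_a(e_1)=e_1$ for every $a\in A$; rescaling $e_1$ we may further arrange $\lambda_{e_2}(e_2)=e_1+e_2$. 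Defining $\chi\colon A\to\mathbb{F}_p$ by $\lambda_a(e_2)=\chi(a)e_1+e_2$, the identity $\lambda_{a\circ b}=\lambda_a\lambda_b$ forces $\chi$ to be a circle-group homomorphism; a short induction gives $e_2^{\circ n}=\binom{n}{2}e_1+ne_2$ in additive notation, from which $\chi(x_1e_1+x_2e_2)=x_2$ follows. Substituting into $a\circ b=a+\lambda_a(b)$ then recovers the claimed formula.

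To determine the multiplicative group, note that for odd $p$ we have $\binom{p}{2}\equiv 0\pmod p$, so $e_2^{\circ p}=0$; combined with a direct commutativity check on the formula, this gives $A_\circ\simeq\Z/(p)\times\Z/(p)$. For $p=2$ a short computation yields $(0,1)^{\circ 2}=(1,0)\neq 0$, whence $(0,1)$ has circle-order $4$ and $A_\circ\simeq\Z/(4)$. The main technical point is the normal-form reduction for $\lambda_{e_2}$, which rests on the rigidity of unipotent elements in $\GL_2(\mathbb{F}_p)$ up to diagonal rescaling of the basis; everything else is either a group-theoretic restriction on the image of $\lambda$ or a direct calculation using $a\circ b=a+\lambda_a(b)$.
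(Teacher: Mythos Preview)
The paper does not give its own proof of this proposition; it simply quotes it from Bachiller's classification \cite{MR3320237}. Your argument is a correct self-contained proof, and it follows what is essentially the natural route: split on the isomorphism type of $A_+$, invoke Rump's lemma for the cyclic case, and for $A_+\simeq(\Z/(p))^2$ analyse the image of $\lambda$ in $\GL_2(\mathbb{F}_p)$.

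Two places are a little compressed and worth one extra sentence each. First, after fixing $e_1$ generating $\Soc(A)$ and $e_2\notin\Soc(A)$, you use that $\lambda_{e_2}(e_2)=ce_1+e_2$ (coefficient $1$ on $e_2$) before rescaling $e_1$. This is true, but your stated reason only covers triviality of the action on $\langle e_1\rangle$; you should also note that $\lambda_{e_2}$ induces an order-$p$ automorphism of the quotient $A_+/\langle e_1\rangle\simeq\Z/(p)$, hence acts trivially there as well, forcing the $e_2$-coefficient to be $1$. Second, the deduction ``$\chi(x_1e_1+x_2e_2)=x_2$'' deserves the one-line justification that every element can be written as $e_1^{\circ k}\circ e_2^{\circ x_2}$ (using $e_1\in\Soc(A)$, so $e_1^{\circ k}=ke_1$ and $e_1^{\circ k}\circ a=ke_1+a$), after which $\chi$ being a $\circ$-homomorphism with $\chi(e_1)=0$, $\chi(e_2)=1$ finishes it. With these two clarifications the proof is complete.
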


\begin{notation}
For a prime number $p$, let  
$\B_p$ be the skew brace over $\Z/(p)\times\Z/(p)$ of Proposition \ref{pro:Bachiller}. 
\end{notation}


\begin{lem}
\label{lem:unlabeled}
Let $p$ be an odd prime number. The sequence $\sigma_+(\B_p)$ is exact and splits.
\end{lem}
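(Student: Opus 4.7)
The plan is to construct an explicit section $s\colon H^2(\cD_+)\to H^2_b(\B_p,\C^\times)$ of $\delta_+$. Exactness of $\sigma_+(\B_p)$ at $S^2_+(\cD)$ and at $H^2_b(\cD)$ is automatic from the definition of $S^2_+$ as the kernel of $\delta_+$, so producing a section simultaneously establishes surjectivity of $\delta_+$ (hence exactness at the rightmost term) and the splitting.

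First I compute $\lambda_x(y)=(y_1+x_2 y_2,\,y_2)$ directly from the law of $\B_p$. Since $p$ is odd, Proposition \ref{pro:Bachiller} shows that both $(\B_p)_+$ and $(\B_p)_\circ$ are isomorphic to $\Z/(p)\times\Z/(p)$, so by Theorem \ref{thm:directproduct}, $H^2(\cD_+)\simeq \Z/(p)$, generated by the bilinear cocycle
\[
\alpha((x_1,x_2),(y_1,y_2))=\xi^{x_2 y_1},
\]
where $\xi$ is a fixed primitive $p$-th root of unity.

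The core task is to build $\mu$ so that $(\alpha,\mu)\in Z^2_b(\B_p,\C^\times)$. A direct evaluation of the left-hand side of \eqref{eq:equivFS5} with this $\alpha$ yields $\xi^{-x_2 y_2 z_2}$. Noting that, for odd $p$, the function $y\mapsto y(y-1)/2$ has second additive difference $-y_2 z_2$ modulo $p$, I would try $\mu_0(x,y)=\xi^{x_2 y_2(y_2-1)/2}$; a routine check shows that this $\mu_0$ satisfies \eqref{eq:FS5} together with $\alpha$. However, $\mu_0$ alone is not a multiplicative $2$-cocycle: computing the left-hand side of \eqref{eq:FS4} produces a defect $\xi^{x_2 y_2 z_2}$. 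To cancel this defect without spoiling \eqref{eq:FS5}, I look for a multiplier $\mu_1$ additive in its second argument (so it contributes trivially to \eqref{eq:FS5}) whose own defect in \eqref{eq:FS4} is $\xi^{-x_2 y_2 z_2}$. The simplest candidate, $\mu_1(x,y)=\xi^{x_1 y_2}$, works, and setting
\[
\mu(x,y)=\xi^{x_1 y_2 + x_2 y_2(y_2-1)/2}
\]
produces a factor set $(\alpha,\mu)\in Z^2_b(\B_p,\C^\times)$.

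Finally, since $\alpha$ and $\mu$ take values in the group of $p$-th roots of unity, $\alpha^p=\mu^p=1$, so the assignment $\overline{\alpha^l}\mapsto\overline{(\alpha^l,\mu^l)}$ is a well-defined group homomorphism $s\colon H^2(\cD_+)\to H^2_b(\B_p,\C^\times)$ satisfying $\delta_+\circ s=\id$, completing the proof. The hard part is not the abstract setup but the design of $\mu$: identifying $\mu_0$ via the second-difference identity for $y(y-1)/2$ takes care of \eqref{eq:FS5}, after which one must recognize that the surviving multiplicative cocycle defect can be absorbed by a correction that is additive in its second argument.
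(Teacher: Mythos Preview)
Your proof is correct and follows the same overall strategy as the paper: exhibit an explicit factor set $(\alpha,\mu)\in Z^2_b(\B_p,\C^\times)$ with $\alpha$ the standard generator of $H^2((\B_p)_+,\C^\times)\simeq\Z/(p)$, and use $\overline{\alpha}\mapsto\overline{(\alpha,\mu)}$ as a section of $\delta_+$.

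The only substantive difference is the construction of $\mu$. The paper writes each element as $(1,0)^{\circ(x_1-x_2(x_2-1)/2)}\circ(0,1)^{\circ x_2}$ and pulls back the canonical generator of $M((\B_p)_\circ)$ via the Schur--K\"unneth formula, obtaining $\mu(x,y)=\xi^{x_2(y_1-y_2(y_2-1)/2)}$; it then verifies \eqref{eq:equivFS5} directly. You instead start from \eqref{eq:equivFS5}, observe that its left side is $\xi^{-x_2y_2z_2}$, solve for a $\mu_0$ via the second-difference identity for $y(y-1)/2$, and then repair the residual multiplicative-cocycle defect with an additive-in-the-second-variable correction $\mu_1$, arriving at $\mu(x,y)=\xi^{x_1y_2+x_2y_2(y_2-1)/2}$. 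Your $\mu$ is not the paper's (indeed it need not even represent the same class in $M((\B_p)_\circ)$), but that is irrelevant for $\sigma_+$. The paper's approach has the side benefit of simultaneously identifying $M((\B_p)_\circ)$; yours is more self-contained and shows transparently how such a $\mu$ can be discovered from the compatibility condition alone.
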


\begin{proof}
By Theorem \ref{thm:directproduct}, $M((\B_p)_+)=\langle\overline{\alpha}\rangle\simeq\Z/(p)$ and $M((\B_p)_\circ)=\langle\overline{\mu}\rangle\simeq\Z/(p)$, where
\begin{align*}
&\alpha\colon (\B_p)_+\times (\B_p)_+\to\C^{\times},
\quad
((x_1,x_2),(y_1,y_2))\mapsto \xi^{x_2 y_1},\\
&\mu\colon (\B_p)_+\times (\B_p)_+\to\C^{\times},
\quad
((x_1,x_2),(y_1,y_2))\mapsto \xi^{x_2\left(y_1-\frac{y_2(y_2-1)}{2}\right)},
\end{align*}
for some $p$-th root of unity $\xi$. The formula for $\mu$ comes from the fact that for all $x_1,x_2\in \Z/(p)$, $(x_1,x_2)=(1,0)^{\circ \left(x_1-\frac{x_2(x_2-1)}{2}\right)}\circ (0,1)^{\circ x_2}$. Straightforward computations show that
\begin{align*}
        \alpha(y,z)\alpha(x\circ y,\lambda_x(z))^{-1}\alpha(x,\lambda_x(z))&=\xi^{-y_2x_2z_2}\\
        &=\mu(x,y)\mu(x,y+z)^{-1}\mu(x,z).
    \end{align*}
    
Hence the pair $(\alpha,\mu)$ is a factor set of skew braces. Since $\alpha^p=\mu^p=1$, 
$\overline{(\alpha,\mu)}$ has order $p$. 
Thus 
$s\colon M((\B_p)_+)\to M_{b}(\B_p)$, $\overline{\alpha}\mapsto\overline{(\alpha,\mu)}$, 
is a section of $\delta$. 
\end{proof}

\begin{lem}
\label{lem:FSBpdescription}
Let $p$ be an odd prime number and 
$\xi_1,\xi_2,\xi_3$ be $p$-th root of unity. Define the function $f:\B_p\times \B_p\to \C^{\times}$ by
\begin{equation*}
    f(x,y)=\xi_1^{y_1x_2+y_2\frac{x_2(x_2-1)}{2}}
    \xi_2^{y_2\left(x_1-\frac{x_2(x_2-1)}{2}\right)} \xi_3^{x_2y_2}.
\end{equation*}
for all $x,y\in \B_p$. The element $(1,f)$ is a factor set of the datum $(\B_p,\C^{\times})$.
\end{lem}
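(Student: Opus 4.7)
The plan is to verify directly that $(1,f)$ satisfies conditions \eqref{eq:FS1}--\eqref{eq:FS5} of Definition \ref{defn:FS}. Since the first entry of the pair is the constant map $1$, the conditions \eqref{eq:FS1} and \eqref{eq:FS3} are trivial, and \eqref{eq:FS2} for $\mu = f$ follows immediately from the defining formula, since substituting either $(x_1,x_2)=(0,0)$ or $(y_1,y_2)=(0,0)$ makes each of the three exponents vanish. For \eqref{eq:FS5} I would use the equivalent reformulation \eqref{eq:equivFS5}: with $\alpha = 1$, it reduces to the right additivity
\[
f(x,y+z) = f(x,y)\,f(x,z),
\]
which is immediate because each exponent in the definition of $f$ is linear in $(y_1,y_2)$.

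The essential content is the 2-cocycle identity \eqref{eq:FS4} for $f$ under $\circ$. My strategy is to factor $f = m \cdot h \cdot g$ into the three monomial pieces
\[
m(x,y)=\xi_1^{y_1 x_2+y_2\frac{x_2(x_2-1)}{2}},\quad
h(x,y)=\xi_2^{y_2\left(x_1-\frac{x_2(x_2-1)}{2}\right)},\quad
g(x,y)=\xi_3^{x_2 y_2},
\]
and verify the 2-cocycle identity for each factor separately; the product then automatically satisfies it. For $g$, the verification is straightforward: since $(x\circ y)_2 = x_2 + y_2$, one has $g(x\circ y,z) = g(x,z)g(y,z)$ and $g(x,y\circ z)=g(x,y)g(x,z)$, so $g$ is a bicharacter and the cocycle identity collapses.

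The key combinatorial identity for handling $h$ and $m$ is
\[
\frac{(x_2+y_2)(x_2+y_2-1)}{2}=\frac{x_2(x_2-1)}{2}+\frac{y_2(y_2-1)}{2}+x_2 y_2.
\]
Combined with $(x\circ y)_1 = x_1+y_1+x_2 y_2$, it shows that the map $k\colon \B_p\to \Z/(p)$, $x\mapsto x_1-\frac{x_2(x_2-1)}{2}$, is a group homomorphism from $(\B_p,\circ)$ to $(\Z/(p),+)$; the 2-cocycle identity for $h=\xi_2^{y_2 k(x)}$ then reduces to a trivial linear cancellation $z_2 k(y)-z_2(k(x)+k(y))+z_2 k(x)=0$ in the exponent. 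A direct expansion of $m(y,z)\,m(x\circ y,z)^{-1}\,m(x,y\circ z)\,m(x,y)^{-1}$, using the same combinatorial identity together with the formula for $(x\circ y)_1$, shows that the exponent of $\xi_1$ collapses to $0$. The main obstacle is this $m$-piece expansion, but it is purely mechanical bookkeeping once the identity above is in hand.
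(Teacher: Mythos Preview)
Your proposal is correct. The handling of \eqref{eq:FS1}--\eqref{eq:FS3} and of \eqref{eq:FS5} via its reformulation \eqref{eq:equivFS5} (right additivity of $f$) is exactly what the paper does. The only real difference is in how you check \eqref{eq:FS4}.

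You factor $f=m\cdot h\cdot g$ and verify the $2$-cocycle identity for each piece separately: $g$ is a bicharacter for $\circ$, $h=\xi_2^{y_2 k(x)}$ with $k(x)=x_1-\tfrac{x_2(x_2-1)}{2}$ a $\circ$-homomorphism, and $m$ is checked by a direct expansion. This works, and the combinatorial identity you record is the right tool. The paper instead keeps $f$ whole and shows at once that
\[
f(x\circ y,z)=f(x,z)f(y,z)\,\xi_1^{x_2 y_2 z_2},\qquad
f(x,y\circ z)=f(x,y)f(x,z)\,\xi_1^{x_2 y_2 z_2},
\]
where the second identity follows for free from right additivity and the relation $y\circ z=y+z+(y_2 z_2,0)$ in $\B_p$; the common defect $\xi_1^{x_2 y_2 z_2}$ then visibly cancels in the cocycle identity. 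The paper's route is a bit slicker because it recycles the right additivity already established, but your decomposition isolates exactly which part of $f$ fails to be a $\circ$-bicharacter (namely $m$) and makes the verification of the other two pieces completely conceptual.
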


\begin{proof}
Let $x,y,z\in \B_p$
Clearly $f(0,y)=f(x,0)=1$. It is straight forward that $f(x,y+z)=f(x,y)f(x,z)$.
Then straightforward computations show that
\begin{align*}
    f(x\circ y,z)&=f(x,z)f(y,z)\xi_1^{z_2x_2y_2},\\
    f(x,y\circ z)&=f(x,y)f(x,z)f(x,(y_2z_2,0))=f(x,y)f(x,z)\xi_1^{z_2x_2y_2}.
\end{align*}
Therefore $f(y,z)f(x\circ y,z)^{-1}f(x,y\circ z)f(x,y)^{-1}=1$.
\end{proof}

\begin{pro}
    Let $p$ be an odd prime number. Then $S_+(\B_p)\simeq \Z/(p)\times\Z/(p)$.
\end{pro}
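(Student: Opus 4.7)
The plan is to realize $S_+(\B_p)$ as the quotient $\Z/(p)^3/\Z/(p)$ using the factor sets introduced in Lemma \ref{lem:FSBpdescription}. That lemma yields a group homomorphism
\[
\phi\colon \Z/(p)^3 \to S_+(\B_p),\quad (\xi_1,\xi_2,\xi_3)\mapsto \overline{(1,f_{\xi_1,\xi_2,\xi_3})},
\]
and the goal is to show $\phi$ is surjective with $\ker\phi\simeq \Z/(p)$.

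For surjectivity, Proposition \ref{pro:bilinearfactorset} lets me pick, for any class in $S_+(\B_p)$, a representative $(1,f)$ satisfying $f(x,y+z)=f(x,y)f(x,z)$. Fixing a primitive $p$-th root of unity $\omega$, the right-additivity forces $f(x,y)=\omega^{y_1a(x)+y_2 b(x)}$ for unique maps $a,b\colon\B_p\to\Z/(p)$ vanishing at $0$. Feeding $z=(1,0)$ and $z=(0,1)$ into the cocycle identity \eqref{eq:FS4} (using $y\circ(1,0)=(y_1+1,y_2)$ and $y\circ(0,1)=(y_1+y_2,y_2+1)$) pins down
\[
a(x\circ y) = a(x)+a(y),\qquad b(x\circ y) = b(x)+b(y)+y_2\,a(x).
\]
Commutativity of $(\B_p)_\circ$ forces $y_2 a(x)=x_2 a(y)$, whence $a((1,0))=0$ and thus $a(x)=x_2\alpha$ with $\alpha:=a((0,1))$. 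A short induction on the decomposition $(x_1,x_2)=(1,0)^{\circ(x_1-x_2(x_2-1)/2)}\circ(0,1)^{\circ x_2}$ then solves the recurrence for $b$ in terms of the three parameters $\alpha$, $\beta_1:=b((1,0))$, $\beta_2:=b((0,1))$, and matching the resulting expression with the formula of Lemma \ref{lem:FSBpdescription} gives $(1,f)=(1,f_{\xi_1,\xi_2,\xi_3})$ where $\xi_1=\omega^{\alpha}$, $\xi_2=\omega^{\beta_1}$, $\xi_3=\omega^{\beta_2}$.

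For the kernel, $\overline{(1,f_{\xi_1,\xi_2,\xi_3})}=0$ in $S_+(\B_p)$ iff $(1,f_{\xi_1,\xi_2,\xi_3})=(\partial_+ h,\partial_\circ h)$ for some $h\colon\B_p\to\C^{\times}$ with $h(0)=1$. The condition $\partial_+ h=1$ forces $h$ to be a group homomorphism $(\B_p)_+\to\C^{\times}$, hence $h(x_1,x_2)=\omega^{\gamma_1 x_1+\gamma_2 x_2}$ for some $\gamma_1,\gamma_2\in\Z/(p)$. Since $h(x\circ y)=h(x)h(y)\omega^{\gamma_1 x_2 y_2}$, a direct computation gives $\partial_\circ h(x,y)=\omega^{-\gamma_1 x_2 y_2}$; in the $(\alpha,\beta_1,\beta_2)$-parameterization of the previous paragraph this corresponds exactly to $\xi_1=\xi_2=1$ and $\xi_3=\omega^{-\gamma_1}$. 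Hence $\ker\phi=\{(1,1,c):c\in\Z/(p)\}\simeq\Z/(p)$, and the conclusion $S_+(\B_p)\simeq\Z/(p)\times\Z/(p)$ follows. The main technical burden is the inductive derivation of the closed-form expression for $b$ from the cocycle recurrence; once that is in hand, everything else reduces to routine expansion of the cocycle and coboundary identities.
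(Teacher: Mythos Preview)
Your proof is correct and follows essentially the same strategy as the paper: both arguments use Proposition~\ref{pro:bilinearfactorset} to reduce to representatives $(1,f)$ that are additive in the second variable, then exploit the cocycle identity together with the decomposition $(x_1,x_2)=e_1^{\circ(x_1-x_2(x_2-1)/2)}\circ e_2^{\circ x_2}$ to show that $f$ must be one of the three-parameter family of Lemma~\ref{lem:FSBpdescription}, and finally compute which of these are coboundaries. The only organizational difference is that you package the argument as a surjection $(\Z/p)^3\to S_+(\B_p)$ with kernel $\Z/p$, whereas the paper phrases the same computation as ``each class is uniquely determined by two $p$-th roots of unity''; and you extract the recurrences for $a$ and $b$ directly from \eqref{eq:FS4}, while the paper routes part of that through the left-linearity \eqref{eq:leftlin}.
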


\begin{proof}
Let $c\in S_+(\B_p)$, $x,y\in \B_p$, then by Proposition \ref{pro:bilinearfactorset} there exists a representative $(1,f)$ of $c$ such that $Im(f)\subseteq G_p$ and $f(x,y)= f(x,e_1)^{y_1}f(x,e_2)^{y_2}$.
Combining equation \eqref{eq:leftlin} of Proposition \ref{pro:bilinearfactorset} and Equation \eqref{eq:FS4} one obtains
\begin{equation}
\label{eq:fe1e1trivial}
1=f(e_2,e_2)f(e_1\circ e_2,e_2)^{-1}f(e_1,e_2\circ e_2)f(e_1,e_2)^{-1}=f(e_1,e_1).
\end{equation}
Notice that $x=e_1^{\circ x_1}\circ e_2^{\circ x_2}\circ(-\frac{x_2(x_2-1)}{2},0)$. Thus, by equation \eqref{eq:leftlin} of Proposition \ref{pro:bilinearfactorset} and \eqref{eq:fe1e1trivial},
\[
f(x,e_1)= f(e_2,e_1)^{x_2}
\]
Again, by \eqref{eq:leftlin},
\[f(x,e_2)=f(e_1,e_2)^{x_1}f(e_1,e_2)^{\frac{x_2(x_2-1)}{2}}f(e_2^{\circ x_2},e_2).
\]
Moreover, by \eqref{eq:FS4}
\[f(e_2^{\circ x_2},e_2)= f(e_2,e_2)f(e_2^{\circ(x_2-1)},e_2)f(e_2,e_1)^{x_2-1}.
\]
Therefore, 
\[f(e_2^{\circ x_2},e_2)=f(e_2,e_2)^{x_2}f(e_2,e_1)^{\frac{x_2(x_2-1)}{2}}.
\]
Hence,
\begin{equation*}
    f(x,y)=f(e_2,e_1)^{y_1x_2+y_2\frac{x_2(x_2-1)}{2}}\\
    f(e_1,e_2)^{y_2(x_1-\frac{x_2(x_2-1)}{2})} f(e_2,e_2)^{x_2y_2}.
\end{equation*}
This and Lemma \ref{lem:FSBpdescription} implies that any element of $S_+(\B_p)$ has a representative of the form $(1,f)$ with
\begin{equation*}
    f(x,y)=\xi_1^{y_1x_2+y_2\frac{x_2(x_2-1)}{2}}
    \xi_2^{y_2\left(x_1-\frac{x_2(x_2-1)}{2}\right)} \xi_3^{x_2y_2}.
\end{equation*}
for all $x,y\in \B_p$ with $\xi_1,\xi_2,\xi_3$ some $p$-th root of unity. Let $(1,f)$ and $(1,g)$ be two such representatives of the same element of $S_+(\B_p)$. Denote respectively $\xi_1,\xi_2,\xi_3$ and $\theta_1,\theta_2,\theta_3$ their associated $p$-th root of unity. Then there is a group homomorphism $h\colon(\B_p)_+\mapsto\C^{\times}$ such that $g=\partial_{\circ}h f$. Notice that $Im(h)\subset G_p$ and $\partial_{\circ}h(x,y)=h(e_1)^{x_2y_2}$, thus $\partial_{\circ}h$ is uniquely determined by some $p$-th root of unity $\xi$ such that $\partial_{\circ}h(x,y)=\xi^{x_2y_2}$. Therefore, $g=\partial_{\circ}h f$ if and only if $\xi_3\xi=\theta_3$, $\xi_1=\theta_1$ and $\xi_2=\theta_2$. Thus the elements of $S_+(\B_p)$ are uniquely determined by two $p$-th roots of unity.
\end{proof}

\begin{cor}
    Let $p$ be an odd prime number. Then $M_b(\B_p)\simeq \Z/(p)^3$
\end{cor}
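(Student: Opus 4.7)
The plan is to combine the two immediately preceding results via the exact sequence $\sigma_+(\B_p)$ from Notation \ref{not:sequence}. Concretely, Lemma \ref{lem:unlabeled} tells us that
\[
\begin{tikzcd}
0 \arrow[r] & S^2_+(\B_p)\arrow[r,"\iota"] & M_b(\B_p)\arrow[r,"\delta_+"] & M((\B_p)_+)\arrow[r] & 0
\end{tikzcd}
\]
is a split short exact sequence of abelian groups, so
\[
M_b(\B_p)\simeq S^2_+(\B_p)\times M((\B_p)_+).
\]

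The previous proposition gives $S^2_+(\B_p)\simeq \Z/(p)\times\Z/(p)$, and since the additive group $(\B_p)_+$ is $\Z/(p)\times\Z/(p)$, the Schur--K\"uneth formula (Theorem \ref{thm:directproduct}) together with the triviality of the Schur multiplier of a cyclic group gives $M((\B_p)_+)\simeq \Z/(p)$ (this identification was already used in the proof of Lemma \ref{lem:unlabeled}). Substituting yields
\[
M_b(\B_p)\simeq \Z/(p)\times\Z/(p)\times\Z/(p)=\Z/(p)^3,
\]
which is the claim.

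There is essentially no obstacle left: all the work has been carried out in Lemma \ref{lem:unlabeled} (exactness and splitting of $\sigma_+(\B_p)$) and in the preceding proposition (computation of $S^2_+(\B_p)$). The only thing to verify is that $M((\B_p)_+)\simeq \Z/(p)$, which is a direct application of Theorem \ref{thm:directproduct}. Hence the corollary follows in one line from these inputs.
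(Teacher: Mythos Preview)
Your proof is correct and follows exactly the approach implicit in the paper: the corollary is an immediate consequence of Lemma \ref{lem:unlabeled} (which gives the split exact sequence $\sigma_+(\B_p)$) together with the preceding proposition computing $S_+^2(\B_p)\simeq\Z/(p)^2$ and the identification $M((\B_p)_+)\simeq\Z/(p)$ already made in that lemma's proof.
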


\section{Schur covers}
\label{coverings}

The results and proofs of the first two parts of this section (until Subsection \ref{subsection:examples}) are modeled on \cite[\S2.1]{MR1200015}. Good references for 
the theory of Schur covers are for example \cite{MR0681287} and \cite{MR1357169}. 


\begin{defn}
    Let $Q$ be a skew brace. A \emph{Schur cover} of $Q$ is a skew brace 
    $E$ such that
    there exists an annihilator extension
    \[
    \begin{tikzcd}
	0 & K & {E} & Q & 0
	\arrow[from=1-1, to=1-2]
	\arrow[from=1-2, to=1-3]
	\arrow[from=1-3, to=1-4]
	\arrow[from=1-4, to=1-5]
    \end{tikzcd}
    \]    
    with $K\subseteq E'$ and $K\simeq M_b(Q)$. 
\end{defn}

\begin{lem}
\label{lem:restricts}
    Let $A$ be a finite skew brace and $K\subseteq A$ a subskewbrace. Then every
    skew brace homomorphism $A\to\C^{\times}$ restricts to the trivial homomorphism 
    on $K$ if and only if
    $K\subseteq A'$.
\end{lem}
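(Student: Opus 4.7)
The plan is to exploit the fact that $\mathbb{C}^{\times}$, viewed as a (trivial) skew brace, forces any skew brace homomorphism $f\colon A\to\mathbb{C}^{\times}$ to satisfy both $f(x+y)=f(x)f(y)$ and $f(x\circ y)=f(x)f(y)$. This means that such an $f$ factors through the abelianization $A^{\ab}=A/A'$, which is the heart of the argument.

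For the implication $K\subseteq A'\Rightarrow f|_K$ trivial, I would first observe that since $\mathbb{C}^{\times}$ is an abelian group, $f([a,b]_+)=1$ for all $a,b\in A$, so $f$ kills $[A,A]_+$. Then, for $a*b=-a+a\circ b-b$, the two compatible multiplicativity properties yield
\[
f(a*b)=f(a)^{-1}f(a\circ b)f(b)^{-1}=f(a)^{-1}f(a)f(b)f(b)^{-1}=1,
\]
so $f$ also kills $A^{(2)}=A*A$. Since $A'$ is the additive subgroup of $A$ generated by $[A,A]_+\cup A^{(2)}$, it follows that $f|_{A'}$ is trivial, and in particular $f|_K$ is trivial.

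For the reverse implication, I would argue contrapositively: assume $K\not\subseteq A'$ and produce a skew brace homomorphism $A\to\mathbb{C}^{\times}$ that is nontrivial on $K$. Let $\pi\colon A\to A^{\ab}$ be the canonical quotient. By hypothesis, $\pi(K)$ is a nontrivial subgroup of the finite abelian group $A^{\ab}$ (recall that $A^{\ab}$ is a trivial skew brace of abelian type, since $A'$ contains both $[A,A]_+$ and $A*A$). By Pontryagin duality for finite abelian groups, there exists a group homomorphism $\chi\colon A^{\ab}\to\mathbb{C}^{\times}$ with $\chi(\pi(k))\neq 1$ for some $k\in K$. Since $A^{\ab}$ is a trivial skew brace, $\chi$ is a skew brace homomorphism, and hence so is $f=\chi\pi\colon A\to\mathbb{C}^{\times}$. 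By construction $f(k)\neq 1$, so $f|_K$ is nontrivial.

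The only mild obstacle is formally handling $\mathbb{C}^{\times}$ as a trivial skew brace and recognizing that a character of the finite abelian group $A^{\ab}$ automatically respects both operations once we identify $A^{\ab}$ with a trivial skew brace; both points follow immediately from the convention already fixed in the preliminaries and from the description of $A'$ given there.
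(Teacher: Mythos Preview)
Your proof is correct and follows essentially the same approach as the paper. The paper only writes out the non-trivial implication (existence of a non-vanishing homomorphism when $K\not\subseteq A'$), using the injectivity of $\mathbb{C}^{\times}$ to extend a character from $\langle\overline{a}\rangle$ to $A^{\ab}$ and then composing with $A\to A^{\ab}$; your invocation of Pontryagin duality is just a repackaging of the same idea, and your explicit treatment of the ``easy'' direction matches what the paper leaves implicit.
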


\begin{proof}
    We only prove the non-trivial implication. 
    Assume that there is an element $a\in K\cap(A\setminus A')$. 
    Let $k$ be the order of $\overline{a}$, where $\overline{a}$ denotes the equivalence class of $a$ in the finite 
    abelian group $A^{\ab}$. 
    Let $\xi$ be a $k$-th primitive root of unity and $\alpha\colon \langle\overline{a}\rangle\to\C^{\times}$, $\overline{a}\mapsto\xi$. 
    Since $\C^{\times}$ is injective, the homomorphism $\alpha$ can be extended to a group homomorphism
    $A^{\ab}\to\C^{\times}$. The composition with the natural map yields a 
    skew brace homomorphism $A\to\C^{\times}$ such that $a\mapsto\xi$. 
\end{proof}

\begin{pro}
\label{pro:coversTra}
Let
\[
    \begin{tikzcd}
	0 & K & {E} & Q & 0
	\arrow[from=1-1, to=1-2]
	\arrow[from=1-2, to=1-3]
	\arrow[from=1-3, to=1-4]
	\arrow[from=1-4, to=1-5]
    \end{tikzcd}
    \]
    be an annihilator extension of finite skew braces. Then $E$ is a cover of $Q$
    if and only if the transgression map $\Tra\colon \Hom(K,\C^{\times})\to M_b(Q)$ is a group isomorphism.
\end{pro}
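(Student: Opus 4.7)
The plan is to apply the Hoschild--Serre exact sequence of Theorem~\ref{thm:HoschildSerre} to the given annihilator extension, specialized to coefficients $A=\C^{\times}$. This yields an exact sequence
\[
\begin{tikzcd}[column sep=small]
0 \arrow[r] & \Hom(Q,\C^{\times}) \arrow[r,"\Inf"] & \Hom(E,\C^{\times}) \arrow[r,"\Res"] & \Hom(K,\C^{\times}) \arrow[r,"\Tra"] & M_b(Q) \arrow[r,"\Inf"] & M_b(E),
\end{tikzcd}
\]
which is the natural home of both conditions appearing in the statement.

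The key observation is that Lemma~\ref{lem:restricts} translates the condition $K\subseteq E'$ into a statement about the map $\Res$: the inclusion $K\subseteq E'$ holds if and only if every skew brace homomorphism $E\to\C^{\times}$ restricts to the trivial homomorphism on $K$, i.e.\ $\Img(\Res)=0$. By exactness at $\Hom(K,\C^{\times})$, this is equivalent to $\Ker(\Tra)=0$, that is, to $\Tra$ being injective.

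The remaining ingredient is finite-abelian Pontryagin duality: for any finite abelian group $K$ one has a (non-canonical) isomorphism $\Hom(K,\C^{\times})\simeq K$, and in particular $|\Hom(K,\C^{\times})|=|K|$. Since $M_b(Q)$ is finite by Theorem~\ref{thm:M(Q)finite}, both sides of $\Tra$ are finite groups. Thus $\Tra$ is an isomorphism if and only if $\Tra$ is injective and $|\Hom(K,\C^{\times})|=|M_b(Q)|$, equivalently $|K|=|M_b(Q)|$.

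I would then conclude as follows. If $E$ is a Schur cover, then $K\subseteq E'$ gives injectivity of $\Tra$, while $K\simeq M_b(Q)$ yields $|\Hom(K,\C^{\times})|=|K|=|M_b(Q)|$, forcing $\Tra$ to be a bijection. Conversely, if $\Tra$ is an isomorphism, then $\Tra$ is injective and hence $K\subseteq E'$, and moreover $\Hom(K,\C^{\times})\simeq M_b(Q)$; dualizing via Pontryagin gives $K\simeq \Hom(K,\C^{\times})\simeq M_b(Q)$, so $E$ is a Schur cover. The only mildly delicate step is the passage from $\Hom(K,\C^{\times})\simeq M_b(Q)$ back to $K\simeq M_b(Q)$, but this is immediate from finite-abelian duality once we know $K$ is finite (which is built into the hypothesis that the extension is finite).
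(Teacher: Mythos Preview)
Your proof is correct and follows essentially the same route as the paper: both arguments specialize the Hoschild--Serre sequence to $A=\C^{\times}$, use Lemma~\ref{lem:restricts} to identify $K\subseteq E'$ with the vanishing of $\Res$ (hence the injectivity of $\Tra$), and then invoke finiteness of $M_b(Q)$ together with $K\simeq\Hom(K,\C^{\times})$ to handle the cardinality/isomorphism part.
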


\begin{proof}
    Suppose that $E$ is a cover. Since $K\subseteq E'$, every skew brace homomorphism $E\to\C^{\times}$ restricts to the trivial skew brace homomorphism $K\to\C^{\times}$.
By Theorem \ref{thm:HoschildSerre}, the transgression is injective. In addition, since $|K|=|M_b(Q)|$, $\Tra$ must be an isomorphism. 

Conversely, Assume that $\Tra$ is an isomorphism. By Theorem \ref{thm:HoschildSerre}, the map $\Res \colon \Hom(E,\C^{\times})\to \Hom(K,\C^{\times})$ is trivial. Lemma  \ref{lem:restricts} implies that $K\subseteq E'$. Moreover, $K\simeq \Hom(K,\C^{\times})\simeq M_b(Q)$.
\end{proof}
The following theorem and proof are modeled on Theorem 2.1.4 of \cite{{MR1200015}}.
\begin{thm}
\label{thm:existence}
    Any finite skew brace admits at least one Schur cover. 
\end{thm}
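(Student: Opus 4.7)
The plan is to apply Proposition~\ref{pro:coversTra}: it suffices to construct an annihilator extension
\[
\begin{tikzcd}
0 \arrow[r] & K \arrow[r] & E \arrow[r] & Q \arrow[r] & 0
\end{tikzcd}
\]
with $K$ a finite abelian group isomorphic to $M_b(Q)$ whose transgression $\Tra\colon \Hom(K,\C^{\times}) \to M_b(Q)$ is an isomorphism. By Theorem~\ref{thm:M(Q)finite}, $M_b(Q)$ is a finite abelian group; decompose it as $M_b(Q) = \bigoplus_{i=1}^r \langle \overline{c_i}\rangle$ with $|\overline{c_i}| = n_i$. Invoking Theorem~\ref{thm:M(Q)finite} once more, for each $i$ I would choose a representative $(\alpha_i,\mu_i)$ of $\overline{c_i}$ whose values lie in $G_{n_i}$.

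I would then take $K_i$ to be an abstract cyclic group of order $n_i$ together with a fixed isomorphism $\phi_i\colon K_i \to G_{n_i}$, set $K = K_1 \times \cdots \times K_r \simeq M_b(Q)$, and transport each $(\alpha_i,\mu_i)$ through $\phi_i^{-1}$ to obtain $(\tilde{\alpha}_i,\tilde{\mu}_i) \in Z^2_b(Q,K_i)$. Define $A,M\colon Q\times Q \to K$ componentwise by $A(x,y) = (\tilde{\alpha}_1(x,y),\dots,\tilde{\alpha}_r(x,y))$ and $M(x,y) = (\tilde{\mu}_1(x,y),\dots,\tilde{\mu}_r(x,y))$. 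Since each $(\tilde{\alpha}_i,\tilde{\mu}_i)$ satisfies the cocycle identities \eqref{eq:FS1}--\eqref{eq:FS5} and these identities are preserved componentwise in the direct product $K$, the pair $(A,M)$ lies in $Z^2_b(Q,K)$. Let $E = K\times_{(A,M)} Q$, which by construction is an annihilator extension of $Q$ by $K$.

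To finish, I would check that the associated $\Tra$ is an isomorphism. For each $i$, let $\psi_i\in\Hom(K,\C^{\times})$ be the character whose restriction to $K_i$ equals $\phi_i$ and which is trivial on every other factor. Then $\psi_i A$ and $\psi_i M$ coincide componentwise with $\alpha_i$ and $\mu_i$ respectively, so $\Tra(\psi_i) = \overline{(\alpha_i,\mu_i)} = \overline{c_i}$. Hence $\Tra$ hits a generating set of $M_b(Q)$ and is therefore surjective; as $|\Hom(K,\C^{\times})| = |K| = |M_b(Q)|$, it is an isomorphism, and Proposition~\ref{pro:coversTra} yields that $E$ is a Schur cover of $Q$. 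The only delicate point in the argument is the existence of cocycle representatives with image contained in a prescribed finite group of roots of unity, which is precisely what Theorem~\ref{thm:M(Q)finite} provides; everything else reduces to unpacking the definitions of $\Tra$ and of the direct-product factor set.
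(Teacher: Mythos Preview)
Your proof is correct and follows essentially the same approach as the paper's own argument: decompose $M_b(Q)$ into cyclic factors, use Theorem~\ref{thm:M(Q)finite} to choose representatives with values in the appropriate roots of unity, assemble a product cocycle with values in $K\simeq M_b(Q)$, and verify that the resulting transgression sends the coordinate characters to the chosen generators, whence it is an isomorphism by cardinality and Proposition~\ref{pro:coversTra} applies. The only cosmetic difference is that the paper works directly with the concrete groups $G_{d_i}\subseteq\C^{\times}$ rather than abstract cyclic groups equipped with isomorphisms $\phi_i$, but the content is identical.
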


\begin{proof}
    Let $Q$ be a finite skew brace. Since $M_b(Q)$ is a finite abelian group by Theorem \ref{thm:M(Q)finite}, 
    we write 
    \[
    M_b(Q)=\langle c_1\rangle\times\cdots\times\langle c_m\rangle
    \]
    for some $c_1,\dots,c_m$. For each $i\in\{1,\dots,m\}$ let $d_i$ be the order of $c_i$. By Theorem \ref{thm:M(Q)finite}, Each
    $c_i$ has a representative $(\alpha_i,\mu_i)$ with 
    $\alpha_i,\mu_i\colon Q\times Q\to\C^{\times}$ such that 
    $\Img(\alpha_i)\subseteq G_{d_i}$ and $\Img(\mu_i)\subseteq G_{d_i}$. 
    Let $G=G_{d_1}\times\cdots\times G_{d_m}$ and 
    \[
    \iota_i\colon H^2_{b}\left(Q,G_{d_i}\right)\to G
    \]
    where G is identified with $M_b(Q)$ and $\iota_i$ is the homomorphism induced by the inclusion $G_{d_i}\subseteq\C^{\times}$. 
    
    For
    each $i$, let $\beta_i\in H^2_b(Q,G_{d_i})$ be 
    the equivalence class of $(\alpha_i,\mu_i)$ considered as an element of $Z^2_b(Q,G_{d_i})$. Then 
    $c_i=\iota_i(\beta_i)$, 
    
    Let $\beta\in H^2_{b}(Q,G)$ be the image of $\left(\beta_1,\dots , \beta_m\right)$ under the isomorphism 
    \[
    \prod_{i=1}^m H^2_{b}\left(Q,G_{d_i}\right)\to H^2_{b}\left(Q,G\right).
    \]
    Let 
    \[
    \begin{tikzcd}
    0\arrow[r] & G\arrow{r} & E\arrow[r]& Q\arrow[r]&0
    \end{tikzcd}
    \]
    be an annihilator extension associated with $\beta$. Let us show that $E$ is a covering skew brace of $Q$. 
    Consider the transgression homomorphism associated to $\beta$,
    \[
    \Tra\colon \Hom\left(G,\C^{\times}\right) \to M_{b}\left(Q\right).
    \]
    Let $p_i\colon G\to\C^{\times}$, $p(z_1,\dots, z_m)=z_i$. Then $\Tra\left(p_i\right) = c_i$ and hence $\Tra$ is surjective. 
    Since 
    \[
    \Hom\left(G,\C^{\times}\right)\simeq G\simeq M_{b}\left(Q\right)
    \]
    and $M_b(Q)$ is finite, 
    $\Tra$ is an isomorphism. 
    This concludes the proof by Proposition~\ref{pro:coversTra}.
\end{proof}

\begin{exa}
\label{ex:Schurcovercyclicprime}
In general, Schur covers are not unique up to isomorphism. For example, 
let $p$ be a prime number. Then the trivial skew brace $\Z/(p)$ 
has exactly two Schur covers, i.e., $C_{(p^2,p)}$ and $\B_p$. 
\end{exa}



    


\subsection{The relation between $M_b(Q)$ and $H_b^2(Q,A)$}

We now aim to obtain an upper bound for the number of Schur covers of a given skew brace. 

\begin{lem}
\label{lem:imageInflation}
    Let $\pi\colon Q_1 \to Q_2$ be a surjective homomorphism of skew braces and 
    \[
    \begin{tikzcd}[column sep=1.7em]
	0 & K & {B_1} & {Q_1} & 0 & 0 & K & {B_2} & {Q_2} & 0
	\arrow[from=1-1, to=1-2]
	\arrow[from=1-2, to=1-3]
	\arrow["{f_1}", from=1-3, to=1-4]
	\arrow[from=1-4, to=1-5]
	\arrow[from=1-6, to=1-7]
	\arrow[from=1-7, to=1-8]
	\arrow["{f_2}", from=1-8, to=1-9]
	\arrow[from=1-9, to=1-10]
\end{tikzcd}\]
be annihilator extensions that correspond to 
$c_1\in H_b^2(Q_1,K)$ and $c_2\in H_b^2(Q_2,K)$, respectively. 
Then $c_1$ is the image of $c_2$ under the inflation map
\[
H^2(Q_2,K)\to H^2(Q_1,K)
\] 
if and only if there exists a skew brace homomorphism 
$\phi\colon B_1\to B_2$, which renders the following diagram commutative
\[
\begin{tikzcd}
	0 & K & {B_1} & {Q_1} & 0 \\
	0 & K & {B_2} & {Q_2} & 0
	\arrow[from=1-1, to=1-2]
	\arrow[from=1-2, to=1-3]
	\arrow["{f_1}", from=1-3, to=1-4]
	\arrow[from=1-4, to=1-5]
	\arrow[from=2-1, to=2-2]
	\arrow[from=2-2, to=2-3]
	\arrow["{f_2}", from=2-3, to=2-4]
	\arrow[from=2-4, to=2-5]
	\arrow["\phi", from=1-3, to=2-3]
	\arrow["\pi", from=1-4, to=2-4]
	\arrow[equal,from=1-2, to=2-2]
\end{tikzcd}
\]
\end{lem}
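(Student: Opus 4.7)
The plan is to pass from extensions to factor sets via Theorem \ref{thm:bijection}, so that the statement becomes a concrete cohomological equation. Choose transversals $t_i\colon Q_i\to B_i$ of $f_i$ and let $(\alpha_i,\mu_i)$ be the resulting factor sets representing $c_i$, so that $B_i\simeq K\times_{(\alpha_i,\mu_i)}Q_i$ with $K$ identified with $\iota_i(K)\subseteq\Ann(B_i)$. Unpacking definitions, $c_1=\mathrm{Inf}(c_2)$ means that there exists a map $h\colon Q_1\to K$ with $h(0)=1$ such that
\[
\alpha_1(x,y)=\alpha_2(\pi(x),\pi(y))\,\partial_+h(x,y),\qquad
\mu_1(x,y)=\mu_2(\pi(x),\pi(y))\,\partial_\circ h(x,y),
\]
for all $x,y\in Q_1$. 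The plan is to show that such an $h$ exists if and only if a commutative $\phi$ exists, passing explicitly between $h$ and $\phi$.

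For the implication ``$\mathrm{Inf}(c_2)=c_1\Rightarrow\phi$ exists'', given $h$ as above I would define
\[
\phi\colon K\times_{(\alpha_1,\mu_1)}Q_1\to K\times_{(\alpha_2,\mu_2)}Q_2,\qquad
(k,x)\mapsto (k\,h(x),\pi(x)).
\]
Commutativity of the diagram is then immediate from the construction. It remains to check that $\phi$ is a skew brace homomorphism; this reduces to substituting the definitions of the two operations on $K\times_{(\alpha_i,\mu_i)}Q_i$ and using the two cohomological equalities above together with the fact that $\pi$ is a homomorphism of skew braces. This is essentially two parallel computations, one for $+$ and one for $\circ$.

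For the converse, assume $\phi$ exists. Set $\tilde t_1:=\phi\circ t_1\colon Q_1\to B_2$; by commutativity $f_2\tilde t_1=\pi f_1 t_1=\pi$, so $\tilde t_1(x)$ and $t_2(\pi(x))$ differ by an element of $K$. Because $\iota_2(K)\subseteq\Ann(B_2)$, Remark \ref{rem: 1transversalenough} together with Remark \ref{rem:decomposition} lets me write, uniquely,
\[
\tilde t_1(x)=\iota_2(h(x))+t_2(\pi(x))=\iota_2(h(x))\circ t_2(\pi(x)),
\]
for a well-defined map $h\colon Q_1\to K$ with $h(0)=1$. Applying $\phi$ to the factor set identities
$t_1(x)+t_1(y)=\iota_1(\alpha_1(x,y))+t_1(x+y)$ and the analogous one for $\circ$, pushing $\iota_2(h(\,\cdot\,))$ past additions and multiplications using that $\iota_2(K)\subseteq\Ann(B_2)$, and reading off the $K$-component via the decomposition of Remark \ref{rem:decomposition}, one obtains exactly the displayed relations $\alpha_1=\alpha_2\circ(\pi\times\pi)\cdot\partial_+h$ and $\mu_1=\mu_2\circ(\pi\times\pi)\cdot\partial_\circ h$. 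Hence $c_1=\mathrm{Inf}(c_2)$.

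The main obstacle is purely computational: in the group case this is a standard argument, but here one must juggle the two operations simultaneously and make essential use of the fact that $\iota_i(K)\subseteq\Ann(B_i)$ (not merely central with respect to one operation) when commuting $\iota_2(h(x))$ past $t_2(\pi(y))$, both for $+$ and for $\circ$. I expect the homomorphism check for $\phi$ to be the most delicate, but it is routine once the ambient factor-set identities are in place and the coboundary formulas $\partial_+h$, $\partial_\circ h$ are tracked carefully.
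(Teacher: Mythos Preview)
Your proposal is correct and follows essentially the same approach as the paper: both pass to factor sets via chosen transversals, extract the map $h$ from the relation $\phi(t_1(x))=h(x)+t_2(\pi(x))$ to obtain the cohomological identities in one direction, and in the other direction define $\phi$ explicitly by $(k,x)\mapsto(k\,h(x),\pi(x))$ (equivalently $k+t_1(x)\mapsto k+h(x)+t_2(\pi(x))$) and verify it is a homomorphism. The only cosmetic difference is that you treat the two implications in the opposite order and work in the $K\times_{(\alpha_i,\mu_i)}Q_i$ model rather than the $k+t_i(x)$ notation.
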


\begin{proof}
Let $t_1$ and $t_2$ be respective transversals of $f_1$ and $f_2$. 
For $i\in\{1,2\}$ consider the factor sets
\[
\alpha_i(x,y)=t_i(x)+t_i(y)-t_i(x+y)\quad \text{and} \quad \mu_i(x,y)=t_i(x)\circ t_i(y)\circ t_i(x\circ y)'.
\]
For the ``if" part, one sees that by the commutativity of the diagram,  
\[
f_2(\phi(t_1(x)))=\pi(x)=f_2(t_2(\pi(x)))
\]
so that there exists a map $h\colon Q_1\to K$ with $h(0)=0$ such that 
\[
\phi(t_1(x))=h(x)+ t_2(\pi(x))=h(x)\circ t_2(\pi(x)).
\]
Therefore
\begin{align*}
    \alpha_1(x,y)&=\phi(\alpha_1(x,y))
    =\partial_+h(x,y)\alpha_2(\pi(x),\pi(y)).
\end{align*}
Similarly one has 
\[
\mu_1(x,y)=\partial_{\circ}h(x,y)\mu_2(\pi(x),\pi(y)).
\]
Conversely, assume that 
\begin{align*}
&\alpha_1(x,y)=\partial_+h(x,y)\alpha_2(\pi(x),\pi(y)),\\
&\mu_1(x,y)=\partial_\circ h(x,y) \mu_2(\pi(x),\pi(y))
\end{align*}
for some map $h\colon Q\to K$ with $h(0)=0$. Straightforward computations show that the map 
\[
\phi\colon B_1\to B_2,\quad k+t_1(x)\mapsto k+h(x)+t_2(\pi(x))
\]
has the required properties.
\end{proof}

If the two annihilator extensions 
\[
\begin{tikzcd}[column sep=1.7em]
0\arrow[r] & K \arrow[r] & B_1\arrow[r] & Q\arrow[r] & 0
\end{tikzcd}
\quad \text{and}\quad\begin{tikzcd}[column sep=1.7em]
0\arrow[r] & K \arrow[r] & B_2\arrow[r] & Q\arrow[r] & 0
\end{tikzcd}
\]
are equivalent, then $B_1'\cap K=0$ if and only if $B_2'\cap K=0$. 
We denote by $I_b^2(Q,K)$ the subset of $H^2_b(Q,K)$ 
that consists of the equivalence classes of 
2-cocycles determined by an annihilator extension  
$\begin{tikzcd}[column sep=1.7em]
0\arrow[r] & K \arrow[r] & B\arrow[r] & Q\arrow[r] & 0
\end{tikzcd}$ 
such that $B'\cap K=0$. 

\begin{lem}
\label{lem:inflationinjective}
    The restriction of the inflation map 
    \[
    \Ext(Q/Q',K)\to H^2_b(Q,K),
    \]
    where $\Ext(Q/Q',K)$ denotes the subset of $H^2(Q/Q',K)$ that 
    yields abelian group extensions,
    is an injective homomorphism whose image is $I_b^2(Q,K)$.
\end{lem}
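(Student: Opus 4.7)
The plan is to build an explicit inverse to the inflation map on $I_b^2(Q,K)$ via the abelianization functor, and use Lemma \ref{lem:imageInflation} to package the ``factors-through-$Q/Q'$'' property as a morphism of extensions.

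First I would set up the well-definedness. Since $Q/Q'$ is an abelian group, it is a trivial skew brace, so a symmetric $f\in Z^2(Q/Q',K)$ yielding an abelian group extension $B_0$ of $Q/Q'$ by $K$ gives inflation $\overline{(g,g)}\in H^2_b(Q,K)$ with $g(x,y)=f(\bar\pi(x),\bar\pi(y))$, where $\bar\pi\colon Q\to Q/Q'$. I would then identify the corresponding annihilator extension with the fibered product $E=B_0\times_{Q/Q'}Q$, whose operations are coordinate-wise.

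Next, to show the image lies in $I_b^2(Q,K)$: in $E=B_0\times_{Q/Q'}Q$, because $B_0$ is a trivial skew brace with abelian underlying group, every additive commutator and every $*$-product has first coordinate $0$. Hence $E'\subseteq\{0\}\times Q'$, so $E'\cap K=0$, i.e., $[E]\in I_b^2(Q,K)$.

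For surjectivity onto $I_b^2(Q,K)$, I would start from an arbitrary annihilator extension $E$ realizing $(Q,K)$ with $E'\cap K=0$ and form the abelianization $\bar E=E/E'$. The hypothesis $E'\cap K=0$ makes $K\hookrightarrow\bar E$. Since the image of $E'$ under $E\twoheadrightarrow Q$ is exactly $Q'$ (it is generated by the images of additive commutators and star products), one gets $\bar E/K\simeq Q/Q'$, yielding an abelian extension class in $\Ext(Q/Q',K)$. The natural skew brace map $E\to\bar E$ inducing the identity on $K$ and $\bar\pi$ on $Q\to Q/Q'$ verifies, via Lemma \ref{lem:imageInflation}, that $[E]$ is the inflation of $[\bar E]$.

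Finally, for injectivity: if $B_1,B_2\in\Ext(Q/Q',K)$ inflate to equivalent elements of $H^2_b(Q,K)$, the annihilator extensions $E_i=B_i\times_{Q/Q'}Q$ are equivalent. For the fibered product, the computation in step two shows $E_i'=\{0\}\times Q'$, so the projection $(b,q)\mapsto b$ identifies $E_i^{\ab}\simeq B_i$ naturally as extensions of $Q/Q'$ by $K$. An equivalence $E_1\to E_2$ therefore induces an equivalence $B_1\to B_2$, so $[B_1]=[B_2]$. The inflation map being a homomorphism is automatic from the definition of inflation, so this gives the claim. The main technical point to watch is keeping the identifications $E_i^{\ab}\simeq B_i$ natural with respect to the structure maps $K\hookrightarrow E_i$ and $E_i\twoheadrightarrow Q$, which is routine once the fibered product description is in place.
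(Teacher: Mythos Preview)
Your argument is correct. Both you and the paper rely on Lemma~\ref{lem:imageInflation} and on passing to the abelianization $E/E'$, and your surjectivity step is essentially identical to the paper's. The genuine difference is in how injectivity is handled. The paper argues directly on the kernel: if $c_2$ inflates to zero, the inflated extension $B_1$ admits a splitting $t_1\colon Q\to B_1$, and composing with the map $\phi\colon B_1\to B_2$ from Lemma~\ref{lem:imageInflation} gives $Q\to B_2$, which factors through $Q/Q'$ because $B_2$ is abelian, producing a splitting of $B_2$. You instead build an explicit left inverse: identifying the inflated extension with the fibered product $B_i\times_{Q/Q'}Q$, you compute $E_i'=\{0\}\times Q'$ exactly, so projection to the first coordinate realises $E_i^{\ab}\simeq B_i$ naturally as extensions, and an equivalence $E_1\to E_2$ descends to one between $B_1$ and $B_2$. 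Your approach is slightly more structural (abelianization as a functorial inverse to inflation on $I_b^2$), while the paper's splitting argument is a bit quicker; both buy the same conclusion with comparable effort. Your fibered-product description also gives the containment of the image in $I_b^2(Q,K)$ by a cleaner computation than the paper's use of $\phi$, though the paper's observation that $\phi$ kills $B_1'$ while fixing $K$ is arguably more transparent.
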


\begin{proof}
    Denote by $\tau\colon\Ext(Q/Q',K)\to H^2_b(Q,K)$ 
    the inflation map. 
    
    For $c_2\in\Ext(Q/Q',K)$ let $c_1=\tau(c_2)$. 
    By Lemma \ref{lem:imageInflation} applied to the corresponding 
    annihilator extensions, there exists a commutative diagram 
    \[
    \begin{tikzcd}
	0 & K & {B_1} & {Q} & 0 \\
	0 & K & {B_2} & {Q/Q'} & 0
	\arrow[from=1-1, to=1-2]
	\arrow[from=1-2, to=1-3]
	\arrow["{f_1}", from=1-3, to=1-4]
	\arrow[from=1-4, to=1-5]
	\arrow[from=2-1, to=2-2]
	\arrow[from=2-2, to=2-3]
	\arrow["{f_2}", from=2-3, to=2-4]
	\arrow[from=2-4, to=2-5]
	\arrow["\phi", from=1-3, to=2-3]
	\arrow["\pi", from=1-4, to=2-4]
	\arrow[equal, from=1-2, to=2-2]
    \end{tikzcd}
\]
where $\pi\colon Q\to Q/Q'$ is the canonical map. 
Assume first that $c_2\in\Ker(\tau)$. Then there is a splitting $t_1\colon Q\to B_1$. Since $B_2$ is an abelian group, the homomorphism $\phi t_1$ factors through  
$t_2\colon Q/Q'\to B_2$. By the commutativity of the diagram, $f_2t_2$ is the identity. Thus $\tau$ is injective.

Back to the general case, notice that if $k\in K\cap B_1'$ then $k=\phi(k)=0$. Therefore $c_1\in I_b^2(Q,K)$.

Let $c\in I_b^2(Q,K)$ and consider the associated 
annihilator extension 
\[
\begin{tikzcd}
0\arrow[r] & K \arrow[r] & B\arrow[r,"f"] & Q\arrow[r] & 0
\end{tikzcd}
\]
corresponding to $c$. 
Since $B'\cap K=0$, we identify $K$ with $(B'+K)/B'$ in $B/B'$. With this identification, 
we obtain the commutative diagram
\[\begin{tikzcd}
	0 & K & {B} & {Q} & 0 \\
	0 & K & {B/B'} & {Q/Q'} & 0
	\arrow[from=1-1, to=1-2]
	\arrow[from=1-2, to=1-3]
	\arrow["{f}", from=1-3, to=1-4]
	\arrow[from=1-4, to=1-5]
	\arrow[from=2-1, to=2-2]
	\arrow[from=2-2, to=2-3]
	\arrow["{f'}", from=2-3, to=2-4]
	\arrow[from=2-4, to=2-5]
	\arrow[from=1-3, to=2-3]
	\arrow[ from=1-4, to=2-4]
	\arrow[equal, from=1-2, to=2-2]
\end{tikzcd}
\]
with $f'(x+B')=f(x)+Q'$. Lemma \ref{lem:imageInflation} concludes the proof
\end{proof}

The following lemma is modeled on \cite[Lemma 2.1.18]{MR1200015}.

\begin{lem}
\label{lem:split_b}
    Let $Q$ be a finite skew brace. 
    The exact sequence 
    \[ 
    \begin{tikzcd}
    0\arrow[r] & B_b^2(Q,\C^{\times}) \arrow[r] & Z_b^2(Q,\C^{\times})\arrow[r] & H_b^2(Q,\C^{\times})\arrow[r] & 0
    \end{tikzcd}
    \]
    splits.
\end{lem}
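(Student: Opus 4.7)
The plan is to mimic the classical argument for groups, exploiting the fact that $\mathbb{C}^\times$ is a divisible abelian group. Since the sequence is an exact sequence of abelian groups, it suffices to show that $B_b^2(Q,\mathbb{C}^\times)$ is a divisible subgroup of $Z_b^2(Q,\mathbb{C}^\times)$, because divisible subgroups of abelian groups are always direct summands.

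First I would introduce the group
\[
C_b^1(Q,\mathbb{C}^\times) = \{h\colon Q\to\mathbb{C}^\times \mid h(0)=1\},
\]
which is (pointwise) isomorphic to a product of copies of $\mathbb{C}^\times$ indexed by $Q\setminus\{0\}$. Since $\mathbb{C}^\times$ is divisible and an arbitrary product of divisible abelian groups is divisible, $C_b^1(Q,\mathbb{C}^\times)$ is divisible. Next, the map $h\mapsto(\partial_+h,\partial_\circ h)$ is, by the very definition of coboundary for the datum $(Q,\mathbb{C}^\times)$, a group homomorphism from $C_b^1(Q,\mathbb{C}^\times)$ onto $B_b^2(Q,\mathbb{C}^\times)$. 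A homomorphic image of a divisible abelian group is divisible, so $B_b^2(Q,\mathbb{C}^\times)$ is divisible.

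Then I would invoke the standard fact that a divisible subgroup $D$ of an abelian group $A$ is a direct summand: because $\mathbb{C}^\times$ (and hence any divisible abelian group) is injective in the category of abelian groups, the identity $D\to D$ extends to a retraction $A\to D$, which provides a complement. Applying this to $D=B_b^2(Q,\mathbb{C}^\times)$ inside $A=Z_b^2(Q,\mathbb{C}^\times)$ yields a subgroup $S\subseteq Z_b^2(Q,\mathbb{C}^\times)$ with $Z_b^2(Q,\mathbb{C}^\times)=B_b^2(Q,\mathbb{C}^\times)\oplus S$, and the restriction of the quotient map $Z_b^2(Q,\mathbb{C}^\times)\to H_b^2(Q,\mathbb{C}^\times)$ to $S$ is an isomorphism whose inverse gives the required section.

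There is no real obstacle here: the argument is purely homological and uses nothing specific about the skew brace cocycle conditions beyond the fact that coboundaries come from maps $Q\to\mathbb{C}^\times$. The only point where one should be a bit careful is in verifying that $C_b^1(Q,\mathbb{C}^\times)\to B_b^2(Q,\mathbb{C}^\times)$ really is a group homomorphism (which is immediate from the definitions of $\partial_+$ and $\partial_\circ$) and that $C_b^1(Q,\mathbb{C}^\times)$ is genuinely a product of $\mathbb{C}^\times$'s (which is immediate since the only constraint is $h(0)=1$). Finiteness of $Q$ is not actually used in this argument, although it is consistent with the running hypothesis of the section.
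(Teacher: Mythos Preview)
Your argument is correct. The key observation that $B_b^2(Q,\mathbb{C}^\times)$ is a homomorphic image of the divisible group $C_b^1(Q,\mathbb{C}^\times)\cong\prod_{q\in Q\setminus\{0\}}\mathbb{C}^\times$, hence itself divisible, hence a direct summand of $Z_b^2(Q,\mathbb{C}^\times)$, is entirely valid and cleanly proves the lemma.

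The paper, however, takes a different route: it invokes the existence of a Schur cover $E$ of $Q$ (Theorem~\ref{thm:existence}) with factor set $(\alpha,\mu)\in Z_b^2(Q,M_b(Q))$, and uses that the associated transgression map $\Tra\colon\Hom(M_b(Q),\mathbb{C}^\times)\to M_b(Q)$ is an isomorphism (Proposition~\ref{pro:coversTra}). This produces an \emph{explicit} section $d\mapsto(\phi_d\alpha,\phi_d\mu)$, where $\phi_d=\Tra^{-1}(d)$. The trade-off is clear: your approach is more elementary and self-contained, avoids any dependence on the Schur cover machinery, and as you note does not even require $Q$ to be finite. The paper's approach, by contrast, hinges on the finiteness of $M_b(Q)$ and the prior construction of a cover, but it yields a concrete splitting described in terms of a fixed factor set. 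For the purposes of the next theorem in the paper, which only uses the existence of a complement $X\subseteq Z_b^2(Q,\mathbb{C}^\times)$ isomorphic to $M_b(Q)$, your abstract splitting suffices just as well.
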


\begin{proof}
Let $E$ be a Schur cover of $Q$ with factor set $(\alpha,\mu)$; 
see Theorem \ref{thm:existence}. By Proposition \ref{pro:coversTra}, 
the associated transgression map 
$\Hom(M_b(Q),\C^{\times})\to M_b(Q)$ 
is a group isomorphism. For every  
$d\in M_b(Q)$ there is a map $\phi_d\in \Hom(M_b(Q),\C^{\times})$ such that $(\phi_d \alpha,\phi_d \mu)\in Z^2_b(Q,\C^{\times})$ and $\overline{(\phi_d \alpha,\phi_d \mu)}=d$. Straightforward computations show that the map $d\mapsto (\phi_d \alpha,\phi_d \mu)$ is a splitting.
\end{proof}

\begin{notation}
   Let $K$ be a finite abelian group and $\widehat{K}=\Hom(K,\C^{\times})$. For all $c\in H_b^2(Q,K)$, write $\psi(c)\colon \widehat{K}\to M_b(Q)$ to denote the transgression map associated to $c$.
   Write $\phi\colon\Ext(Q/Q',K)\to H_b^2(Q,K)$ 
   to denote 
   the restriction of the inflation map.
\end{notation}

\begin{lem}
\label{lem: surjRestr}
    Let $K$ be an abelian group that is a sub skew brace of a finite skew brace $Q$. If the restriction map $\Hom(Q,\C^{\times})\to \Hom(K,\C^{\times})$ is surjective if and only if $Q'\cap K=0$.
\end{lem}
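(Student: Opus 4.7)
The plan is to exploit two basic facts. First, the target $\C^{\times}$ is a trivial abelian skew brace, so every skew brace homomorphism $Q\to\C^{\times}$ annihilates $Q'$ and therefore factors uniquely through the canonical projection $\pi\colon Q\to Q^{\ab}$; this is the same observation used in the proof of Lemma~\ref{lem:restricts}. Second, $\C^{\times}$ is divisible, hence injective in the category of abelian groups, so any group homomorphism from a subgroup of an abelian group into $\C^{\times}$ extends to the whole group.

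For the direction ($\Leftarrow$), I assume $Q'\cap K=0$. Then the kernel of $\pi|_K\colon K\to Q^{\ab}$ is trivial, so $\pi|_K$ is an injective skew brace homomorphism onto an abelian subgroup of $Q^{\ab}$; since $K$ is itself a trivial abelian skew brace, $\pi|_K$ identifies $K$ with this subgroup as abelian groups. Given $\phi\in\Hom(K,\C^{\times})$, I transport it to a group homomorphism on $\pi(K)$ via this identification, extend it to $\tilde{\phi}\in\Hom(Q^{\ab},\C^{\times})$ using the divisibility of $\C^{\times}$, and define $\psi=\tilde{\phi}\circ\pi\in\Hom(Q,\C^{\times})$; by construction, $\psi|_K=\phi$.

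For the direction ($\Rightarrow$), I argue by contraposition, mirroring the proof of Lemma~\ref{lem:restricts}. Suppose $a\in (Q'\cap K)\setminus\{0\}$, let $k$ be its order in the finite abelian group $K$, and fix a primitive $k$-th root of unity $\xi$. By divisibility of $\C^{\times}$, the homomorphism $\langle a\rangle\to\C^{\times}$ sending $a\mapsto\xi$ extends to some $\phi\in\Hom(K,\C^{\times})$. Any $\psi\in\Hom(Q,\C^{\times})$ with $\psi|_K=\phi$ would factor through $Q^{\ab}$, forcing $\psi(a)=1$ because $a\in Q'$; this contradicts $\psi(a)=\phi(a)=\xi\neq 1$. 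Hence the restriction map cannot be surjective.

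I do not foresee any serious obstacle: once the factorization through $Q^{\ab}$ and the injectivity of $\C^{\times}$ as an abelian group are in place, both implications reduce to routine diagram chasing and re-use the idea already present in Lemma~\ref{lem:restricts}.
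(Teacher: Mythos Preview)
Your proposal is correct and follows essentially the same approach as the paper: both directions use the factorization of skew brace homomorphisms $Q\to\C^{\times}$ through $Q^{\ab}$, and for the backward implication both identify $K$ with its image $(Q'+K)/Q'\simeq\pi(K)$ in $Q^{\ab}$ and then invoke the injectivity of $\C^{\times}$ to extend. Your write-up is in fact more detailed than the paper's, which compresses the $(\Leftarrow)$ direction into a single sentence.
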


\begin{proof}
    Suppose by contrapositive that there exists $x\in Q'\cap K$ such that $x\neq 0$. 
    Then there is a group homomorphism $\phi\colon K\to \C^{\times}$ such that $\phi(x)\neq 1$. 
    This homomorphism cannot be a restriction of an element of $\Hom(Q,\C^{\times})$, as every such element restrict to $1$ on $Q'$.
    Conversly, Assume $B'\cap K=0$, the claim follows by identifying $K$ with $(B'+K)/B'$ in $B/B'$ and using the injectivity of $\C^{\times}$.
\end{proof}

The following result is \cite[Theorem 2.1.19]{MR1200015} in the context of skew braces. 

\begin{thm}
Let $K$ be a finite abelian group. Then
\begin{equation}
\label{eq:extsplit}
\begin{tikzcd}[column sep=1.7em]
0\arrow[r] & \Ext(Q/Q',K) \arrow[r,"\phi"] & H_b^2(Q,K)\arrow[r,"\psi"] & \Hom(\widehat{K}, M_b(Q))\arrow[r] & 0
\end{tikzcd}
\end{equation}
is exact and splits. 
\end{thm}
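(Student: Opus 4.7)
The plan is to verify exactness at the three interior spots and then construct a section of $\psi$; surjectivity of $\psi$ falls out of the construction.

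Exactness at $\Ext(Q/Q',K)$ is Lemma \ref{lem:inflationinjective}: it says $\phi$ is injective with image $I_b^2(Q,K)$, the classes whose associated annihilator extension $0\to K\to B\to Q\to 0$ satisfies $B'\cap K=0$. For exactness at $H_b^2(Q,K)$, fix $c$ and the associated extension $0\to K\to B\to Q\to 0$; by definition $\psi(c)$ is precisely the transgression $\Tra\colon \widehat{K}\to M_b(Q)$ of this extension. Theorem \ref{thm:HoschildSerre} gives $\Ker(\Tra)=\Img(\Res)$, so $\psi(c)=0$ iff $\Res\colon \Hom(B,\C^\times)\to \Hom(K,\C^\times)$ is surjective, which by Lemma \ref{lem: surjRestr} is equivalent to $B'\cap K=0$. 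Hence $\Ker(\psi)=I_b^2(Q,K)=\Img(\phi)$.

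The substantive step is the splitting. I would reduce to the cyclic case by writing $K=K_1\times\cdots\times K_m$ with each $K_i\simeq \Z/(n_i)$, identified with $G_{n_i}\subseteq\C^\times$ via a chosen generator. The product decomposition of $H_b^2(Q,-)$ over a direct sum of coefficient groups (the unnamed proposition just before Theorem \ref{thm:HoschildSerre}) together with Pontryagin duality yield compatible isomorphisms $H_b^2(Q,K)\simeq \prod_i H_b^2(Q,K_i)$ and $\Hom(\widehat{K},M_b(Q))\simeq \prod_i M_b(Q)[n_i]$, so it suffices to produce a section $s_n\colon M_b(Q)[n]\to H_b^2(Q,G_n)$ in each cyclic factor. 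For this I would use the splitting $\sigma\colon M_b(Q)\to Z_b^2(Q,\C^\times)$ from Lemma \ref{lem:split_b}, which by construction sends $d$ to $(\phi_d\alpha_E,\phi_d\mu_E)$ with $(\alpha_E,\mu_E)$ a factor set of a Schur cover $E$ and $\phi_d\in\widehat{M_b(Q)}$ the character matching $d$ via the Schur-cover transgression. Because this transgression is an isomorphism of finite abelian groups, it preserves orders, so when $d\in M_b(Q)[n]$ the character $\phi_d$ has order dividing $n$ and hence factors through $G_n\subseteq\C^\times$. Consequently $\sigma$ restricts to a group homomorphism $M_b(Q)[n]\to Z_b^2(Q,G_n)$, and the induced map to $H_b^2(Q,G_n)$ is the required $s_n$; evaluating $\psi(s_n(d))$ at the canonical generator of $\widehat{G_n}$ yields $d$, so $\psi\circ s_n=\id$.

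The step requiring the most care is this order-preservation argument for $\phi_d$: one really needs the section from Lemma \ref{lem:split_b} to be the one coming from a Schur cover, because only there is the transgression an isomorphism and hence order-preserving. Without that structural input, the $n$-torsion section would not be forced to land in $Z_b^2(Q,G_n)$ and the reduction to the cyclic case would break.
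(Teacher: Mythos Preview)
Your argument is correct, and your treatment of exactness matches the paper's exactly (Lemma~\ref{lem:inflationinjective}, Theorem~\ref{thm:HoschildSerre}, Lemma~\ref{lem: surjRestr}). For the splitting, both you and the paper feed in the section from Lemma~\ref{lem:split_b}, but the paper avoids your reduction to cyclic $K$: it sends $\gamma\in\Hom(\widehat K,M_b(Q))$ directly to the class of the $K$-valued cocycle $(x,y)\mapsto\bigl(\xi\mapsto\gamma(\xi)(x,y)\bigr)\in\Hom(\widehat K,\C^\times)\simeq K$, using Pontryagin double duality globally rather than componentwise. Your componentwise version is fine but requires the extra (routine) check that $\psi$ respects the product decomposition of $H_b^2(Q,-)$.

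One minor point: your final caveat is stronger than necessary. Any homomorphic section $\sigma\colon M_b(Q)\to Z_b^2(Q,\C^\times)$ already sends $d\in M_b(Q)[n]$ to a cocycle with $\sigma(d)^n=\sigma(d^n)=\sigma(1)=(1,1)$ pointwise, hence with values in $G_n$; the Schur-cover origin of the section in Lemma~\ref{lem:split_b} is not what makes this step work.
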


\begin{proof}
First we show that \eqref{eq:extsplit} 
is exact at $H_b^2(Q,K)$. Let $c\in H_b^2(Q,K)$ and consider an associated 
annihilator extension 
$\begin{tikzcd}[column sep=1.7em]
0\arrow[r] & K \arrow[r] & B\arrow[r] & Q\arrow[r] & 0
\end{tikzcd}$. 
By the Hoschild--Serre exact sequence (Theorem \ref{thm:HoschildSerre}), 
$\psi(c)=0$ if and only if the restriction map 
\[
\Hom(B,\C^{\times})\to\Hom(K,\C^{\times})
\]
is surjective. The latter is equivalent to $K\cap B'=0$. 
The injectivity of $\phi$ comes from Lemma \ref{lem:inflationinjective}. It is left to construct a splitting 
\[
\theta\colon \Hom(\widehat{K}, M_b(Q))\to H_b^2(Q,K).
\]
By Lemma \ref{lem:split_b}, 
\[
Z_b^2(Q,\C^{\times})= B_b^2(Q,\C^{\times})\oplus X
\]
for some subgroup $X$ of $Z_b^2(Q,\C^{\times})$ 
isomorphic to $M_b(Q)$. This implies that
for every $\gamma\in \Hom(\widehat{K},M_b(Q))$ 
and $\xi\in\widehat{K}$, 
$\gamma(\xi)\in M_b(Q)\simeq X$. 
By abuse of notation, we will identify 
$\gamma(\xi)$ with its image in  
$Z_b^2(Q,\C^{\times})$ 
for $\gamma\in\Hom(\widehat{K},M_b(Q))$
and $\xi\in\widehat{K}$. 

For $x,y\in Q$, it is straightforward to see that the map 
\[
\chi_{\gamma}(x,y)\colon \widehat K\to \C^{\times}\times\C^{\times}, \quad \xi\mapsto \gamma(\xi)(x,y),
\]
is a group homomorphism. To any $\gamma\in \Hom(\widehat{K},M_b(Q))$ 
we can associate the maps
\begin{align*}
&\alpha_{\gamma}: Q\times Q\to \Hom(\widehat{K},\C^{\times}),\quad (x,y)\mapsto \pi_1\chi_{\gamma}(x,y),\\
&\mu_{\gamma}\colon Q\times Q\to \Hom(\widehat{K},\C^{\times}),\quad (x,y)\mapsto \pi_2\chi_{\gamma}(x,y),
\end{align*}
where $\pi_1\colon \C^{\times}\times\C^{\times}\to \C^{\times}$ and $\pi_2\colon \C^{\times}\times\C^{\times}\to \C^{\times}$ denote respectively 
the projection onto the first and second coordinates. Straightforward computations show that $(\alpha_{\gamma},\mu_{\gamma})\in Z_b^2(Q,\Hom(\widehat{K},\C^{\times}))$. 
Since $K$ is finite, the map 
\[
\tau\colon K\to\Hom(\widehat{K},\C^{\times}), \quad a\mapsto (\xi\mapsto \xi(a)),
\]
is an isomorphism. 
We define
$\theta(\gamma)=\overline{(\tau^{-1}\alpha_{\gamma},\tau^{-1}\mu_{\gamma})}$. It is straightforward to see that $\theta$ is an homomorphism. Then
\[
(\psi\theta(\gamma))(\xi)
=\overline{(\xi\tau^{-1}\alpha_{\gamma},\xi\tau^{-1}\mu_{\gamma})}
\]
for all $\xi\in\widehat{K}$. 
In addition, let $x,y\in Q$. Then 
\begin{align*}
    (\xi\tau^{-1}\alpha_{\gamma}(x,y),\xi\tau^{-1}\mu_{\gamma}(x,y))
    &=(\pi_1\chi_{\gamma}(x,y)(\xi),\pi_2\chi_{\gamma}(x,y))(\xi))\\
    &=(\pi_1\gamma(\xi)(x,y),\pi_2\gamma(\xi)(x,y))\\
    &= \gamma(\xi)(x,y).
\end{align*}
Thus $\psi\theta =\id$.
\end{proof}

The following corollary is modeled on \cite[Corollary 2.1.20]{MR1200015}. 

\begin{cor}
    Let $Q$ be a finite skew brace and $K$ be a finite abelian group. Then $H_b^2(Q,K)\simeq \left((Q/Q')\otimes K\right)\times (M_b(Q)\otimes K)$.
\end{cor}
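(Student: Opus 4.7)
The plan is to read off the claim directly from the preceding theorem and then rewrite each factor of the splitting using classical identifications valid for finite abelian groups. Concretely, the exact sequence
\[
\begin{tikzcd}[column sep=1.7em]
0\arrow[r] & \Ext(Q/Q',K) \arrow[r,"\phi"] & H_b^2(Q,K)\arrow[r,"\psi"] & \Hom(\widehat{K}, M_b(Q))\arrow[r] & 0
\end{tikzcd}
\]
splits, so $H_b^2(Q,K) \simeq \Ext(Q/Q', K) \times \Hom(\widehat K, M_b(Q))$. It then remains to identify each factor as a tensor product.

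For the first factor, $Q/Q'$ is a finite abelian group (quotient of a finite skew brace by the ideal $Q'$, abelian by definition of $Q'$). For any finite abelian group $A$ decomposed as $A = \bigoplus_i \Z/(n_i)$ and any abelian group $B$, one has $\Ext^1(\Z/(n), B) \simeq B/nB \simeq \Z/(n) \otimes B$; taking the direct sum over $i$ gives $\Ext(A, B) \simeq A \otimes B$. Applied with $A = Q/Q'$ and $B = K$, this yields $\Ext(Q/Q', K) \simeq (Q/Q') \otimes K$.

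For the second factor, since $K$ is finite abelian, Pontryagin duality gives a (non-canonical) isomorphism $\widehat K \simeq K$, so $\Hom(\widehat K, M_b(Q)) \simeq \Hom(K, M_b(Q))$. Both $K$ and $M_b(Q)$ are finite abelian — finiteness of $M_b(Q)$ is Theorem \ref{thm:M(Q)finite} — and for finite abelian groups the identification $\Hom(A, B) \simeq A \otimes B$ again follows by cyclic decomposition: $\Hom(\Z/(n), B) = B[n]$ is isomorphic to $B/nB = \Z/(n) \otimes B$ when $B$ is finite. Applying this with $A = K$ and $B = M_b(Q)$ gives $\Hom(\widehat K, M_b(Q)) \simeq K \otimes M_b(Q) \simeq M_b(Q) \otimes K$. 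Combining the two identifications with the splitting yields the claimed isomorphism.

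No serious obstacle is anticipated: the content of the corollary lies entirely in the preceding theorem, and the present argument is a bookkeeping step assembling standard formulas for $\Ext$ and $\Hom$ of finite abelian groups. The only mild subtlety is that the isomorphisms $\widehat K \simeq K$ and $\Hom(A,B) \simeq A\otimes B$ for finite abelian groups are not canonical, but this is harmless since the statement is only about abstract isomorphism types.
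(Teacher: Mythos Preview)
Your proof is correct and follows essentially the same approach as the paper: both read off the isomorphism from the splitting of the exact sequence \eqref{eq:extsplit} and then invoke the standard identifications $\Ext(A,B)\simeq A\otimes B\simeq \Hom(A,B)$ for finite abelian groups (together with $\widehat K\simeq K$). Your version is simply more explicit about how these identifications arise from cyclic decompositions, which is fine.
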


\begin{proof}
This is a consequence of the splitting exact sequence $\eqref{eq:extsplit}$ and the fact that 
\[
\Hom(A,B)\simeq \Hom(B,A)\simeq A\otimes B\simeq \Ext(A,B)
\]
for all finite abelian groups $A$ and $B$.
\end{proof}

\begin{lem}[Lemma 2.5.13 of \cite{MR1200015}]
\label{lem: automorphism}
    Let $K$ be a finite abelian group. For any automorphism $\psi$ of $\Hom(K,\C^{\times})$, 
    there exists a group automorphism $\theta$ of $K$ such that for all $\xi\in \Hom(K,\C^{\times})$ and $k\in K$, 
    $\psi(\xi)(k)=\xi(\theta(k))$.
\end{lem}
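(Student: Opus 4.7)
The plan is to invoke Pontryagin duality for finite abelian groups, which is the classical source of such statements. The key ingredient is the canonical (evaluation) isomorphism between $K$ and its double dual $\widehat{\widehat{K}} := \Hom(\Hom(K,\C^\times),\C^\times)$, which carries automorphisms of $\widehat{K}$ back to automorphisms of $K$.

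First I would introduce the evaluation map
\[
\eta\colon K\to\widehat{\widehat{K}},\quad k\mapsto\bigl(\xi\mapsto\xi(k)\bigr),
\]
and record that it is a group isomorphism. This is standard: by the structure theorem one reduces to the cyclic case $K=\Z/(n)$, where $\widehat{K}\simeq K$ and $\eta$ is injective (for $0\neq k\in K$ one exhibits $\xi\in\widehat{K}$ with $\xi(k)\neq1$) and hence bijective by counting.

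Next, given $\psi\in\Aut(\widehat{K})$, I would form its transpose
\[
\psi^{*}\colon\widehat{\widehat{K}}\to\widehat{\widehat{K}},\quad F\mapsto F\circ\psi.
\]
The assignment $\psi\mapsto\psi^{*}$ is a contravariant group homomorphism $\Aut(\widehat{K})\to\Aut(\widehat{\widehat{K}})$, so $\psi^{*}$ is again an automorphism. I would then define
\[
\theta:=\eta^{-1}\circ\psi^{*}\circ\eta\in\Aut(K).
\]
The verification of the required identity is a one-line unwinding of definitions:
\[
\xi(\theta(k))=\eta(\theta(k))(\xi)=\psi^{*}(\eta(k))(\xi)=\eta(k)(\psi(\xi))=\psi(\xi)(k)
\]
for every $\xi\in\widehat{K}$ and $k\in K$, which is exactly the desired relation.

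The only non-formal step is the isomorphism $\eta\colon K\simeq\widehat{\widehat{K}}$; everything else is formal manipulation with duality. I do not expect a real obstacle here, since this isomorphism is standard finite-abelian-group material (one may even simply cite the Pontryagin duality lemmas in \cite{MR1200015}, consistent with the fact that this statement is itself quoted from that book).
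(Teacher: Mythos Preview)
Your argument via the canonical double-dual isomorphism is correct and is exactly the standard proof of this fact. The paper does not supply its own proof at all; it merely quotes the statement as Lemma~2.5.13 of \cite{MR1200015}, so your write-up is in fact more complete than what appears in the paper.
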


\begin{thm}
\label{thm:number}
    Let $Q$ be a finite skew brace. If
    \[ Q/Q'\simeq \Z/(n_1)\oplus \dots \oplus \Z/(n_k) \quad \text{and} \quad M_b(Q)\simeq \Z/(m_1)\oplus \dots \oplus \Z/(m_l)
    \]
    then the number of non isomorphic skew brace 
    Schur covers of $Q$ is at most
    \[\prod_{i,j} \gcd(n_i,m_j).
    \]
\end{thm}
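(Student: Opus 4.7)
The plan is to parametrize Schur covers of $Q$ by certain cohomology classes in $H_b^2(Q, M_b(Q))$ and bound the number of isomorphism classes by counting orbits under a natural action of $\Aut(M_b(Q))$ that arises from re-identifying the kernel with the Schur multiplier.

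By Proposition \ref{pro:coversTra}, a Schur cover of $Q$ is determined (up to equivalence of extensions) by an element $c \in H_b^2(Q, M_b(Q))$ whose transgression $\psi(c) \colon \widehat{M_b(Q)} \to M_b(Q)$ is an isomorphism. Since the set of isomorphisms between two finite abelian groups of the same order is a torsor under the automorphism group, the split exact sequence \eqref{eq:extsplit} shows that the set of such $c$ has cardinality
\[
|\Ext(Q/Q', M_b(Q))| \cdot |\Aut(M_b(Q))|.
\]

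Next I would consider the action of $\Aut(M_b(Q))$ on $H_b^2(Q, M_b(Q))$ given by $\theta \cdot \overline{(\alpha, \mu)} = \overline{(\theta \circ \alpha, \theta \circ \mu)}$, which clearly preserves the subset of Schur cover classes. A direct computation from the definition of $\Tra$ in Theorem \ref{thm:HoschildSerre} shows that $\psi(\theta \cdot c) = \psi(c) \circ \widehat{\theta}$, where $\widehat{\theta}(\xi) = \xi \circ \theta$. By Lemma \ref{lem: automorphism}, the assignment $\theta \mapsto \widehat{\theta}$ is a bijection $\Aut(M_b(Q)) \to \Aut(\widehat{M_b(Q)})$; hence any $\theta$ that stabilizes a Schur cover class must satisfy $\widehat{\theta} = \id$, forcing $\theta = \id$. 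The action is therefore free on the set of Schur cover classes, and the number of orbits is exactly $|\Ext(Q/Q', M_b(Q))|$.

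The central step is to show that cocycles in the same orbit yield isomorphic Schur covers as skew braces. Given $c_2 = \theta \cdot c_1$ with representatives $(\alpha_i, \mu_i)$, the map
\[
\phi \colon M_b(Q) \times_{(\alpha_1, \mu_1)} Q \to M_b(Q) \times_{(\alpha_2, \mu_2)} Q,\quad (k, x) \mapsto (\theta(k), x),
\]
is readily checked to be a skew brace isomorphism using that $\theta$ is a group homomorphism. Therefore the number of non-isomorphic Schur covers is bounded above by the number of orbits. To conclude, decomposing with $\Ext(\Z/(n), \Z/(m)) \simeq \Z/(\gcd(n, m))$ and using the additivity of $\Ext$ in each variable for finite abelian groups gives $|\Ext(Q/Q', M_b(Q))| = \prod_{i,j} \gcd(n_i, m_j)$, as required.

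The main obstacle is the compatibility check for the candidate isomorphism $\phi$: verifying that applying $\theta$ to the first coordinate simultaneously intertwines the additive and multiplicative cocycle structures. This is routine but requires using that $\theta$ is a homomorphism with respect to the single underlying abelian group structure on $M_b(Q)$, while $\alpha$ and $\mu$ transform compatibly under this single map. The freeness argument in the previous paragraph is also delicate in principle, but once the Pontryagin-dual interpretation via Lemma \ref{lem: automorphism} is in place, it is immediate.
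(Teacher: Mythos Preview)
Your proposal is correct and follows essentially the same strategy as the paper: act by $\Aut(M_b(Q))$ on cohomology classes via $(k,x)\mapsto(\theta(k),x)$ to identify isomorphic covers, then reduce the count to $|\Ext(Q/Q',M_b(Q))|$. The only organizational difference is that you count all Schur cover classes via the split sequence~\eqref{eq:extsplit} and then divide by $|\Aut(M_b(Q))|$ using freeness, whereas the paper fixes one cover $c$, moves any other cover $d$ into the fiber $\psi^{-1}(\psi(c))=c\cdot I_b^2(Q,K)$ via the same automorphism trick, and bounds by $|I_b^2(Q,K)|$ directly---so the paper avoids both the cardinality count and the freeness argument.
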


\begin{proof}
    Let $K=M_b(Q)$. For $c\in H_b^2(Q,K)$, let 
    \[
    \tau_c\colon\Hom(K,\C^{\times})\to M_b(Q)
    \]
    be the associated transgression map. 
    Let 
    \[
    E=K\times_{(\alpha,\mu)}Q,
    \quad
    F=K\times_{(\eta,\gamma)}Q
    \]
    be two Schur covers of $Q$. We write $c=\overline{(\alpha,\mu)}$ and $d=\overline{(\eta,\gamma)}$. 
    
    We may assume that $\tau_c=\tau_d$. In fact, by Proposition \ref{pro:coversTra}, the maps $\tau_c$ and $\tau_d$ are isomorphisms. Therefore to any $\xi\in \Hom(K,\C^{\times})$ we can associate a unique element $\xi^*\in \Hom(K,\C^{\times})$ such that $\tau_c(\xi)=\tau_d(\xi^*)$. The map $\xi\mapsto \xi^*$ is an automorphism of $\Hom(K,\C^{\times})$. By Lemma \ref{lem: automorphism}, there is an automorphism $\theta$ of $K$ such that 
    \[
    \xi^*(k)=\xi(\theta(k))\quad \text{for all $k\in K$}.
    \]
    Let $\kappa=\overline{(\theta\eta,\theta\gamma)}$. Then $\tau_{\kappa}=\tau_{d}$. 
    In addition, $K\times_{(\theta\eta,\theta\gamma)}Q$ is isomorphic to $E$ via $(k,x)\mapsto (\theta^{-1}(k),x)$. 
    This proves the claim. 
    
    Now  it is enough to show that $cd^{-1}\in I_b^2(Q,K)$, 
    since 
    \[
        |I_b^2(Q,K)|=|\Ext(Q/Q',K)|=\prod_{i,j} \gcd(n_i,m_j),
    \] 
    It is straightforward to see that the map 
    \[
    H(Q,K)\to \Hom(\Hom(K,\C^{\times}),M_b(Q)),
    \quad 
    f\mapsto \tau_f, 
    \]
    is a group homomorphism. Thus $\tau_{cd^{-1}}$ is trivial. Let $G$ be a skew brace 
    extension associated 
    with $cd^{-1}$. 
    By the Hoschild--Serre exact sequence (Theorem \ref{thm:HoschildSerre}), 
    the restriction $\Hom(G,\C^{\times})\to \Hom(K,\C^{\times})$ is surjective.  
    Then the theorem follows from Lemma \ref{lem: surjRestr}.
\end{proof}

\subsection{Some examples}
\label{subsection:examples}

\begin{pro}
\label{pro:perfectgroup}
    Let $G$ be a perfect group. If $H$ is a Schur cover group of $G$, 
    then $H$ is a skew brace Schur cover of $G$.
\end{pro}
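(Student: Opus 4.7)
The plan is to show that the two conditions in the definition of a skew brace Schur cover (Definition~3.1) hold for $H$ viewed as a trivial skew brace, by reducing skew brace notions to their group counterparts and then using perfectness.

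First I would verify that the group central extension $0 \to M(G) \to H \to G \to 0$ is automatically an annihilator extension of trivial skew braces. Since $G$ is perfect, $H$ is necessarily a group Schur cover in the usual sense, and $M(G) \subseteq Z(H)$. For a trivial skew brace $E$, the computations $-a + a\circ b = a^{-1}(ab) = b$ and $a \circ b - a = (ab)a^{-1}$ show that $\Soc(E) = Z(E,+) = Z(E)$ and $\Ann(E) = Z(E) \cap Z(E,\circ) = Z(E)$. Hence the inclusion $M(G) \subseteq Z(H)$ is the same as $M(G) \subseteq \Ann(H)$, and the extension is an annihilator extension.

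Next I would identify $M_b(G)$ with $M(G)$. Theorem~\ref{thm:M(Gop)} gives
\[
M_b(G) = H_b^2(G,\C^{\times}) \simeq H^2(G,\C^{\times}) \times \Hom(G,\Hom(G,\C^{\times})) = M(G) \times \Hom(G,\Hom(G,\C^{\times})).
\]
Because $G$ is perfect, $G^{\ab}=0$, and any homomorphism from $G$ to an abelian group is trivial. The target $\Hom(G,\C^{\times})$ is abelian, so $\Hom(G,\Hom(G,\C^{\times})) = 0$ and consequently $M_b(G) \simeq M(G)$. Combined with $M(G) \subseteq H$ in the extension above, this gives the required isomorphism between the kernel and $M_b(G)$.

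Finally I would check $M(G) \subseteq H'$, where $H'$ is the skew brace commutator of the trivial skew brace $H$. A direct computation yields $a*b = -a + a\circ b - b = a^{-1}(ab)b^{-1} = 1$ in the trivial case, so $H^{(2)} = 0$ and therefore $H' = [H,H]_+ = [H,H]$ coincides with the group commutator. Since $H$ is a group Schur cover of $G$, one has $M(G) \subseteq [H,H] = H'$. Combining the three steps, the annihilator extension $0 \to M(G) \to H \to G \to 0$ exhibits $H$ as a Schur cover of $G$ in the sense of skew braces. The only real content is the perfectness reduction ensuring that the extra factor $\Hom(G,\Hom(G,\C^{\times}))$ in Theorem~\ref{thm:M(Gop)} vanishes; once that is noted, everything else is just translating between the group and skew brace vocabularies.
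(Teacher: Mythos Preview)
Your proof is correct and takes essentially the same approach as the paper: the paper's proof is the one-liner ``It follows from the fact that $M_b(G)\simeq M(G)$,'' and you supply exactly the details behind this, invoking Theorem~\ref{thm:M(Gop)} together with $G^{\ab}=0$, and verifying that for trivial skew braces the annihilator and commutator coincide with the group-theoretic center and derived subgroup.
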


\begin{proof}
    It follows from the fact that $M_b(G)\simeq M(G)$.
\end{proof}

\begin{pro}
\label{pro:coverbicyclic}
    Let $n\geq1$ be an integer 
    and $d$ a divisor of $n$ such that $C_{(n,d)}$ 
    is a bicyclic brace. Then $C_{(nd,d)}$ is a skew brace Schur cover of $C_{(n,d)}$. 
\end{pro}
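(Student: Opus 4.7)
My plan is to exhibit $E := C_{(nd,d)}$ as a Schur cover of $Q := C_{(n,d)}$ via the reduction-modulo-$n$ map $\pi \colon E \to Q$, $x \mapsto x \bmod n$. First I would verify that $E$ is a well-defined skew brace: by Example~\ref{exa:A(n,d)} this requires every prime divisor of $nd$ to divide $d$, which follows from the analogous property for $C_{(n,d)}$. Because both skew braces are built from the same formulas $x + y$ and $x \circ y = x + y + dxy$, the map $\pi$ is immediately a skew brace homomorphism, and its kernel
\[
K := n\,\Z/(nd) = \{0, n, 2n, \dots, (d-1)n\}
\]
is cyclic of order $d$, matching $|M_b(Q)|$ by Corollary~\ref{cor:Schur_bicyclic}.

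Next I would check the two remaining conditions defining a Schur cover: $K \subseteq \Ann(E)$ and $K \subseteq E'$. Both $E_+$ and $E_\circ$ are abelian (the latter because $x \circ y = x + y + dxy$ is symmetric in $x, y$), so only $\lambda_a = \id$ for $a \in K$ requires attention. Writing $a = kn$, direct expansion gives $\lambda_{kn}(b) = b + dknb$, and $dknb \equiv 0 \pmod{nd}$; hence $K \subseteq \Soc(E) \cap Z(E_\circ) = \Ann(E)$. For $K \subseteq E'$, abelianness of $E_+$ yields $E' = E^{(2)}$, and since $a * b = -a + a \circ b - b = dab$ one computes $E^{(2)} = d\,\Z/(nd)$; the divisibility $d \mid n$ then forces $K = n\,\Z/(nd) \subseteq d\,\Z/(nd) = E'$.

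Combined with the identification $K \simeq \Z/(d) \simeq M_b(Q)$ from Corollary~\ref{cor:Schur_bicyclic}, these two verifications conclude the argument. I do not anticipate any serious obstacle: the whole argument is a compact calculation in $\Z/(nd)$, invoking the bicyclic hypothesis only indirectly through the known value of $M_b(Q)$. The one potentially subtle feature is that the multiplicative group $E_\circ$ need not be cyclic, but this is irrelevant because the definition of a Schur cover places no such restriction on $E$.
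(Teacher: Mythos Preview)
Your proof is correct and follows the same strategy as the paper: both exhibit the extension $0 \to \Z/(d) \to C_{(nd,d)} \xrightarrow{\pi} C_{(n,d)} \to 0$ with $\pi(\overline{x}) = \overline{x}$ and invoke Corollary~\ref{cor:Schur_bicyclic} to identify the kernel with $M_b(C_{(n,d)})$. The paper compresses all of the verifications you spell out (that $C_{(nd,d)}$ is a legitimate skew brace, that the sequence is an annihilator extension, and that $K \subseteq E'$) into the single phrase ``a straightforward calculation,'' so your write-up is strictly more detailed; your observation that $E_\circ$ may fail to be cyclic (while remaining abelian) is a valid subtlety that the paper does not comment on.
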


\begin{proof}
    By Corolary \ref{cor:Schur_bicyclic}, $M_b(C_{(n,d)})\simeq \Z/(d)$. 
    A straightforward calculation shows that the sequence 
    \[
    \begin{tikzcd}
    	0 & {\Z/(d)} & {C_{(nd,d)}} & {C_{(n,d)}} & 0
    	\arrow[from=1-1, to=1-2]
    	\arrow["n", from=1-2, to=1-3]
    	\arrow["\pi", from=1-3, to=1-4]
    	\arrow[from=1-4, to=1-5]
    \end{tikzcd},
    \]
    where $\pi(\overline{x})=\overline{x}$, is exact. 
\end{proof}

Isoclinism of skew braces was introduced in \cite{zbMATH07779852}. 
We define two collections of maps $\phi_+$ and $\phi_*$ that associate respectively to every skew brace $B$ the maps
\begin{align}
    \label{eq:commutator}&\phi^B_{+}\colon (B/\Ann B)^2\to B',&&(\overline{a},\overline{b})\mapsto [a,b]_+,\\
    \label{eq:star}&\phi^B_{*}\colon (B/\Ann B)^2\to B',&&(\overline{a},\overline{b})\mapsto a*b. 
\end{align}

We say that the braces $A$ and $B$ are \emph{isoclinic} 
    if there are two isomorphisms 
    $\xi\colon A/\Ann A\to B/\Ann B$ and $\theta\colon A'\to B'$ 
    such that
    \begin{equation}
        \label{eq:isoclinism}
    \begin{tikzcd}
	{A'} & {(A/\Ann A)^2} & {A'} \\
	{B'} & {(B/\Ann B)^2} & {B'}
	\arrow["\phi_+^A"', from=1-2, to=1-1]
	\arrow["{\theta }", from=1-1, to=2-1]
	\arrow["{\phi_+^B}", from=2-2, to=2-1]
	\arrow["{\phi_*^A}", from=1-2, to=1-3]
	\arrow["{\phi_*^B}"', from=2-2, to=2-3]
	\arrow["\theta", from=1-3, to=2-3]
	\arrow["\xi\times\xi", from=1-2, to=2-2]
    \end{tikzcd}
    \end{equation}
    commutes. We call the pair $(\xi,\theta)$ a skew brace \emph{isoclinism}.
    
\begin{lem}
\label{AnnOfAnExtension}
Let $(\alpha,\mu)\in Z_b^2(Q,K)$ and $E=K\times_{(\alpha,\mu)}Q$. 
Then
\begin{align*}
    \Ann E=\{(k,q):q\in\Ann Q,\;\alpha(q,x)&=\alpha(x,q)\\&=\mu(q,x)=\mu(x,q)\;\forall x\in Q\}.
\end{align*}
\end{lem}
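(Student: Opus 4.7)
The plan is to unpack the definition $\Ann(E)=\{a\in E : a\in Z(E,+)\cap Z(E,\circ),\ \lambda_a=\id\}$ directly, using the explicit formulas for $+$ and $\circ$ on $K\times_{(\alpha,\mu)}Q$, and to express each of the three requirements as an equation on $q\in Q$ and conditions on $\alpha,\mu$. Throughout, the $K$-component of an element $(k,q)$ will be completely unconstrained because $K$ is abelian and $\iota(K)\subseteq\Ann(E)$; the content is entirely about the $Q$-component and the cocycles.

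First I would treat additive centrality. Computing $(k,q)+(k',x)$ and $(k',x)+(k,q)$ and using that $K$ is abelian, the equality for all $(k',x)\in E$ reduces to the two conditions $q+x=x+q$ for all $x\in Q$ and $\alpha(q,x)=\alpha(x,q)$ for all $x\in Q$. Symmetrically, multiplicative centrality gives $q\circ x=x\circ q$ together with $\mu(q,x)=\mu(x,q)$. So far we have $q\in Z(Q_+)\cap Z(Q_\circ)$ and the required symmetry of $\alpha$ and $\mu$ in $q$.

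Next I would handle the socle condition $\lambda_{(k,q)}=\id$, i.e.\ $-(k,q)+(k,q)\circ(k',x)=(k',x)$. A short computation of the inverse $-(k,q)=(k^{-1}\alpha(q,-q)^{-1},-q)$ and a single application of the addition rule reduces this, on the $Q$-coordinate, to $\lambda_q(x)=x$, i.e.\ $q\circ x=q+x$, and on the $K$-coordinate to the scalar identity $\mu(q,x)\alpha(-q,q+x)=\alpha(q,-q)$. Once $\lambda_q=\id$ is assumed, I would plug $y=-q,\,z=q+x$ into the additive cocycle condition \eqref{eq:FS3} (together with the easy consequence $\alpha(q,-q)=\alpha(-q,q)$ obtained from \eqref{eq:FS3} at $(q,-q,q)$) to rewrite $\alpha(-q,q+x)=\alpha(-q,q)\alpha(q,x)^{-1}$; substituting this shows the scalar identity is equivalent to $\mu(q,x)=\alpha(q,x)$ for all $x\in Q$.

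Combining the three groups of conditions yields exactly $q\in\Ann(Q)$ together with $\alpha(q,x)=\alpha(x,q)=\mu(q,x)=\mu(x,q)$ for all $x\in Q$, which is the stated description of $\Ann(E)$. Conversely, each of these equations was derived as a direct equivalence, so the inclusion from right to left is automatic. The only genuinely non-formal point is the reduction of the $K$-coordinate of the socle identity to $\mu(q,x)=\alpha(q,x)$, which is where the additive 2-cocycle relation is used; this is the main step I expect to have to write out carefully, the rest being bookkeeping on the multiplication in $K\times_{(\alpha,\mu)}Q$.
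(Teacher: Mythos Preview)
Your proposal is correct and follows the same overall strategy as the paper: reduce membership in $\Ann(E)$ to the three conditions (additive centrality, multiplicative centrality, $\lambda$-triviality) and read off the conditions on $q$, $\alpha$, $\mu$ coordinate by coordinate.

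The one place where you work harder than necessary is the socle condition. You compute the additive inverse $-(k,q)$, form $-(k,q)+(k,q)\circ(k',x)$, and then invoke the additive $2$-cocycle identity \eqref{eq:FS3} to collapse the resulting scalar equation to $\mu(q,x)=\alpha(q,x)$. The paper instead observes that $\lambda_{(k,q)}=\id$ is equivalent to $(k,q)+(b,x)=(k,q)\circ(b,x)$ for all $(b,x)$, and simply compares
\[
(kb\,\alpha(q,x),\,q+x)\quad\text{with}\quad(kb\,\mu(q,x),\,q\circ x),
\]
which immediately gives $q+x=q\circ x$ and $\alpha(q,x)=\mu(q,x)$ with no cocycle manipulation at all. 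Your detour is valid (and your auxiliary identity $\alpha(q,-q)=\alpha(-q,q)$ is in fact superfluous: plugging $(q,-q,q+x)$ into \eqref{eq:FS3} already yields $\alpha(-q,q+x)=\alpha(q,-q)\alpha(q,x)^{-1}$), but the paper's route avoids it entirely.
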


\begin{proof}
Let $(k,q)\in E$, $b\in K$ and $x\in Q$. A direct calculation shows that 
\[
(k,q)+(b,x)=(b,x)+(k,q)
\]
if and only if 
\[
    (kb\alpha(q,x),q+x)=(bk\alpha(x,q),x+q).
\]
In addition, $(k,q)\circ(b,x)=(b,x)\circ (k,q)$ if and only if
\[
(kb\mu(q,x),q\circ x)=(bk\mu(x,q),x\circ q).
\]
Finally,
$(k,q)+(b,x)=(k,q)\circ (b,x)$ if and only if 
\[
(kb\alpha(q,x),q+x)=(kq\mu(q,x),q\circ x).
\]
From this, the claim follows. 
\end{proof}

\begin{rem}
\label{TrivOfAnExtension}
Let 
$\begin{tikzcd}
0\arrow[r] & K \arrow[r] & E\arrow[r] & Q\arrow[r] & 0
\end{tikzcd}$ 
be an annihilator extension of braces. Let $(\alpha,\mu)$ 
be a factor set of the extension. A direct calculation shows that 
the generators of $E'$ are exactly the elements
\begin{gather*}
(\alpha(q,x)\alpha(q,-q)^{-1}\alpha(x,-x)^{-1}\alpha(q+x,-q)\alpha(q+x-q,-x),[q,x]_+),\\
(\alpha(q,-q)^{-1}\alpha(x,-x)^{-1}\mu(q,x)\alpha(-q,q\circ x)\alpha(-q+q\circ x,-x),q*x)
\end{gather*}
with $q,x\in Q$.
\end{rem}

\begin{thm}
\label{thm:isoclinism}
    Any two Schur covers of a finite skew brace are isoclinic. 
\end{thm}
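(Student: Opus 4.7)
The plan is to build an explicit isoclinism $(\xi, \theta)$ between two Schur covers $E_1 = K \times_{(\alpha_1, \mu_1)} Q$ and $E_2 = K \times_{(\alpha_2, \mu_2)} Q$ of $Q$, in parallel with the classical group-theoretic result. The key tool is a set-theoretic map $\tilde\xi : E_1 \to E_2$ that is a brace homomorphism up to an error taking values in $K \subseteq \Ann(E_2)$.

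The first step is a cohomological reduction. Following the proof of Theorem \ref{thm:number}, after replacing $E_2$ by an isomorphic Schur cover we may assume that the transgressions $\tau_{c_1}$ and $\tau_{c_2}$ coincide, where $c_i = \overline{(\alpha_i, \mu_i)}$. Then $c_1 c_2^{-1}$ has trivial transgression and hence, by Theorem \ref{thm:HoschildSerre} and Lemma \ref{lem:inflationinjective}, lies in $I_b^2(Q, K)$ and is the inflation of an element of $\Ext(Q/Q', K)$. Since $Q/Q'$ is a trivial skew brace, an abelian group extension of $Q/Q'$ by $K$ is described by a single symmetric $2$-cocycle $\beta : (Q/Q') \times (Q/Q') \to K$ whose additive and multiplicative parts coincide. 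Writing $\pi : Q \to Q/Q'$ for the projection, there is therefore a map $h : Q \to K$ with $h(0) = 1$ such that
\[
\alpha_1 = \alpha_2 \cdot (\beta \circ (\pi \times \pi)) \cdot \partial_+ h,
\qquad
\mu_1 = \mu_2 \cdot (\beta \circ (\pi \times \pi)) \cdot \partial_\circ h.
\]

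Next I define $\tilde\xi : E_1 \to E_2$ by $\tilde\xi((k, q)) = (k \cdot h(q), q)$. A direct computation with the operations on $K \times_{(\alpha_i, \mu_i)} Q$ gives, for $a = (k_a, q_a)$ and $b = (k_b, q_b)$ in $E_1$,
\[
\tilde\xi(a + b) = \tilde\xi(a) + \tilde\xi(b) + \gamma(a, b),
\qquad
\tilde\xi(a \circ b) = \tilde\xi(a) \circ \tilde\xi(b) + \gamma(a, b),
\]
where $\gamma(a, b) = (\beta(\pi(q_a), \pi(q_b)), 0) \in K \subseteq \Ann(E_2)$. Remark \ref{TrivOfAnExtension} together with $K \subseteq E_i'$ yields $E_i' = \{(k, q) : k \in K,\ q \in Q'\}$, and since $\pi$ vanishes on $Q'$ the error $\gamma$ vanishes on $E_1' \times E_1'$. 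Thus $\tilde\xi|_{E_1'}$ is a genuine skew brace isomorphism $\theta : E_1' \to E_2'$. For the annihilator, Lemma \ref{AnnOfAnExtension} gives $\Ann(E_i) = \{(k, q) : k \in K,\ q \in Z_i\}$, where $Z_i$ consists of those $q \in \Ann(Q)$ with $\alpha_i(q, x) = \alpha_i(x, q)$ and $\mu_i(q, x) = \mu_i(x, q)$ for all $x$. Using the symmetry of $\beta$ and the fact that $q \in \Ann(Q)$ forces $q + x = x + q$, so that $\partial_+ h(q, x) = \partial_+ h(x, q)$ and similarly for $\partial_\circ h$, the defining conditions for $Z_1$ and $Z_2$ are equivalent. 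Hence $Z_1 = Z_2$, $\tilde\xi$ sends $\Ann(E_1)$ onto $\Ann(E_2)$, and descends to the identity bijection $\xi : E_1/\Ann(E_1) = Q/Z_1 \to Q/Z_2 = E_2/\Ann(E_2)$, which is a skew brace isomorphism because the $\gamma$-errors lie in $\Ann(E_2)$.

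The final step is verifying commutativity of \eqref{eq:isoclinism}. Since $[a,b]_+$ and $a*b$ lie in $E_1'$, this reduces to the identities
\[
\tilde\xi([a, b]_+) = [\tilde\xi(a), \tilde\xi(b)]_+,
\qquad
\tilde\xi(a * b) = \tilde\xi(a) * \tilde\xi(b)
\]
for all $a, b \in E_1$. Both become accumulated products in $K$ once the displayed identities of the previous paragraph are iterated. I expect the main obstacle to lie here, especially in the five-term expansion $\tilde\xi(-a + a \circ b - b)$; the bookkeeping of the $\gamma$-corrections is nontrivial. However, upon collecting all errors and applying the group $2$-cocycle relation for $\beta$ on $Q/Q'$, both products collapse to $\beta(s, t)\beta(t, s)^{-1}$ with $s = \pi(q_a)$ and $t = \pi(q_b)$, which is trivial by symmetry of $\beta$. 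This yields the two identities and shows that $(\xi, \theta)$ is a skew brace isoclinism between $E_1$ and $E_2$.
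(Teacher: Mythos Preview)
Your argument is correct and takes a genuinely different route from the paper's proof. The paper does \emph{not} reduce to the case of equal transgressions; instead it decomposes $K\simeq\prod_i\langle c_i\rangle$, uses the transgression isomorphisms to produce homomorphisms $\phi_i\colon K\to\C^\times$ and maps $h_i\colon Q\to\C^\times$ with $(\phi_i\alpha\,\partial_+h_i,\phi_i\mu\,\partial_\circ h_i)=(\eta_i,\gamma_i)$, and then passes to an auxiliary extension $\tilde F=(\C^\times)^m\times_{(\eta,\gamma)}Q$ with enlarged coefficients to build a genuine skew brace homomorphism $\theta\colon E\to\tilde F$ whose restriction lands in $F'$. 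Your approach avoids the enlargement to $(\C^\times)^m$ entirely: by first normalising so that $\tau_{c_1}=\tau_{c_2}$ (borrowing from the proof of Theorem~\ref{thm:number}), the difference $c_1c_2^{-1}$ becomes inflated from a \emph{symmetric} $2$-cocycle $\beta$ on the abelian group $Q/Q'$, and the symmetry of $\beta$ is precisely what makes the direct set-map $\tilde\xi\colon E_1\to E_2$ respect commutators and $*$-products. What your approach buys is conceptual clarity about \emph{why} the isoclinism exists (the obstruction lives in $\Ext(Q/Q',K)$, which is governed by symmetric cocycles); what the paper's approach buys is that no preliminary reduction is needed and the map $\theta$ is a genuine homomorphism from the start.

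Two minor points. First, your description of $Z_i$ omits the condition $\alpha_i(q,x)=\mu_i(q,x)$ from Lemma~\ref{AnnOfAnExtension}; this is also preserved, since for $q\in\Ann(Q)$ one has $q+x=q\circ x$ and hence $\partial_+h(q,x)=\partial_\circ h(q,x)$, but it should be checked. Second, the accumulated error in the final step does not literally collapse to $\beta(s,t)\beta(t,s)^{-1}$: a direct computation with Remark~\ref{TrivOfAnExtension} shows the $\bar\beta$-contribution to $\tilde\xi([a,b]_+)[\tilde\xi(a),\tilde\xi(b)]_+^{-1}$ is $\beta(s,t)\beta(s,-s)^{-1}\beta(s+t,-s)$, which reduces to $1$ via the cocycle identity and symmetry (and similarly for $*$). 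The conclusion stands, but the bookkeeping you flagged as delicate deserves to be written out.
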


\begin{proof}
Let $Q$ be a finite skew brace and $K=M_b(Q)$. 
Let 
\[
E=K\times_{(\alpha,\mu)}Q,
\quad
F=K\times_{(\eta,\gamma)}Q,
\]
be two Schur covers of $Q$.

Assume that $K\simeq \langle c_1\rangle\times\dots \times \langle c_m\rangle$,
where each $c_i$ is a complex $n_i$-th root of unity. 
There exist $(\alpha_i,\mu_i)\in Z^2_b(Q,\langle c_i\rangle)$ and $(\eta_i,\gamma_i)\in Z^2_b(Q,\langle c_i\rangle)$ for all $1\leq i\leq m$ such that
\begin{align*}
&\alpha(x,y)=(\alpha_1(x,y),\dots,\alpha_m(x,y)),
&&\mu(x,y)=(\mu_1(x,y),\dots,\mu_m(x,y)),\\
&\eta(x,y)=(\eta_1(x,y),\dots,\eta_m(x,y)),
&&\gamma(x,y)=(\gamma_1(x,y),\dots,\gamma_m(x,y)).
\end{align*}
Since each $c_i\in\C^{\times}$, 
the pair $(\alpha_i,\mu_i)$ is a representative of some $g_i\in K$. Similarly, 
each $(\eta_i,\gamma_i)$ is a representative of some $f_i\in K$. 
By Proposition \ref{pro:coversTra}, the transgression map 
\[
\begin{tikzcd}
\Hom\left(K,\C^{\times}\right)\arrow[r,"\Tra"] & K
\end{tikzcd}
\]
with respect to $E$ is an isomorphism. 
Similarly, the transgression map with respect to $F$ is an isomorphism. 
There are homomorphisms $\phi_i\colon K \to\C^{\times}$ such that 
$(\phi_i\alpha,\phi_i\mu)$ is a representative of $f_i$. Similarly, there are homomorphisms $\psi_i\colon K \to\C^{\times}$ such that 
$(\psi_i\eta,\psi_i\gamma)$ is a representative of $g_i$. 
Thus there are maps $h_i\colon Q\to\C^{\times}$ such that $\left(\phi_i\alpha\partial_+h_i, \phi_i\mu\partial_{\circ} h_i\right)=(\eta_i,\gamma_i)$. 

We claim that $(k,q)\in\Ann(E)$ if and only if $(k,q)\in\Ann(F)$. To prove this let $(k,q)\in\Ann(E)$. By Lemma \ref{AnnOfAnExtension}, 
$q\in\Ann(Q)$ and 
\[
\alpha_i(q,x)=\alpha_i(x,q)=
\mu_i(q,x)=\mu_i(x,q)
\]
for all $x\in Q$ and $i\in\{1,\dots,m\}$. Let $x\in Q$ and  $i\in\{1,\dots,m\}$. Then 
\[
\eta_i(q,x) = \phi_i\left(\alpha_i\left(q,x\right)\right)h_i\left(x\right)h_i\left(q+x\right)^{-1}h_i\left(q\right)=\eta_i(x,q).
\]
Similarly, one proves that 
$\gamma_i(q,x)=\gamma_i(x,q)$ and 
$\eta_i(q,x)=\gamma_i(q,x)$.

The following map is a well-defined surjective
skew brace homomorphism: 
\[
\xi\colon E/\Ann(E)\to F/\Ann(F).
\quad
(k,q)+\Ann(E)\mapsto (k,q)+\Ann(F).
\]

To show that $E'\simeq F'$, consider the skew brace 
$\tilde{F}=(\C^{\times})^m\times_{(\eta,\gamma)}Q$. Then, define the maps 
\begin{align*}
&\phi\colon K\to (\C^{\times})^m,&& k \mapsto (\phi_1(k),\dots , \phi_m(k)), \\
&h\colon Q\to (\C^{\times})^m,&& q \mapsto (h_1(q),\dots , h_m(q)),\\
&\theta \colon E\to \tilde{F},&& (k,q)\mapsto (\phi(k)h(q)^{-1},q).
\end{align*}
A straightforward calculation shows that 
$\theta$ is a skew brace homomorphism. 
We claim that $\theta$ maps generators of $E'$ to generators of $F'$. Indeed,
by using Remark \ref{TrivOfAnExtension}, 
\begin{multline*}
    \theta((\alpha(q,x)\alpha(q,-q)^{-1}\alpha(x,-x)^{-1}\alpha(q+x,-q)\alpha(q+x-q,-x),[q,x]))\\
    =(\eta(q,x)\eta(q,-q)^{-1}\eta(x,-x)^{-1}\eta(q+x,-q)\eta(q+x-q,-x),[q,x]).
\end{multline*}
Similarly,
\begin{multline*}
    \theta((\alpha(q,-q)^{-1}\alpha(x,-x)^{-1}\mu(q,x)\alpha(-q,q\circ x)\alpha(-q+q\circ x,-x),q*x))\\
    =(\eta\left(q,-q\right)^{-1}\eta(x,-x)^{-1}\gamma(q,x)\eta(-q,q\circ x)\eta(-q+q\circ x,-x),q*x).
\end{multline*}
Our candidate for the isomorphism is the map $\tilde{\theta} \colon E'\rightarrow F'$,  
the restriction of the map $\theta$ to the brace $E'$. Similarly, we construct 
a homomorphism $\omega\colon F\rightarrow \tilde{E}$, where $\tilde{E}=(\C^{\times})^m\times_{(\alpha,\mu)}Q$. The
restriction of $\omega$ to $F'$ is the inverse of $\tilde{\theta}$ because the compositions 
restrict to identities on the generators of $E'$ and $F'$. 

It follows that the diagram \eqref{eq:isoclinism} is commutative. 
\end{proof}
\begin{rem}
    As trivial skew braces can only be isoclinic to trivial skew braces, 
    the skew brace Schur covers of a perfect 
    group are exactly the ones coming from group theory.
\end{rem}
\begin{rem}
    Let $p$ be a prime number. 
    Example \ref{ex:Schurcovercyclicprime} implies that there 
    are two isoclinism classes of skew braces of order $p^2$. 
    One contains trivial skew braces and the other 
    contains $C_{(p^2,p)}$ and $\B_p$.
\end{rem}

\section{Representation Theory}

The following definition is inspired by 
\cite[Remark 3.3]{MR4504147}.

\begin{defn}
Let $A$ be a skew brace. A (complex degree-$n$) \emph{representation} of $A$ 
is a pair $(\beta,\rho)$ of group homomorphisms 
\begin{equation}
\label{eq:representation}
\beta\colon A_+\to\GL_n(\C)\quad\text{and}\quad 
\rho\colon A_\circ\to\GL_n(\C)
\end{equation}
such that 
$\beta(x\circ y)= \rho(x)\beta(y)\rho(x)^{-1}\beta(x)$ for all $x,y\in A$. 
\end{defn}

We also define \emph{projective representations} of 
skew braces considering group homomorphisms $A_+\to\PGL_n(\C)$ 
and $A_\circ\to\PGL_n(\C)$. 

\begin{notation}
   We will write $\prescript{\rho(x)}{}{\beta(y)}=\rho(x)\beta(y)\rho(x)^{-1}$ 
   to denote the conjugation 
   of $\beta(y)$ by $\rho(x)$. 
\end{notation}

\begin{rem}
\label{rem:representation}
    Equation \eqref{eq:representation} is equivalent to 
    \[
    \beta(x+\lambda_x(y)-x)= \rho(x)\beta(y)\rho(x)^{-1}=\prescript{\rho(x)}{}{\beta(y)}.
    \]
\end{rem}

\begin{defn}
    Let $A$ be a skew brace. 
    We say that two representations 
    $(\beta,\rho)$ and $(\beta_1,\rho_1)$ of $A$ 
    are \emph{equivalent} 
    if there exists $f\in\GL_n(\C)$ such that
    \[
    \beta_1(x)= f\beta(x) f^{-1}\quad 
    \text{and}
    \quad \rho_1(x)= f\rho(x) f^{-1} 
    \quad \text{for all $x\in A$}.
    \]
\end{defn}

We also define \emph{equivalence} of degree-$n$ projective representations 
of skew braces considering group homomorphisms $A_+\to\PGL_n(\C)$ 
and $A_\circ\to\PGL_n(\C)$ and a matrix $f\in\PGL_n(\C)$.

%
%


\begin{rem}
\label{rem:matrix_valued}
Let $(\beta,\rho)$ be a projective representation. 
By choosing coset representatives of $\beta(x)$ and $\rho(x)$ we can always 
think of a projective representation of a skew brace $A$ 
as a pair of matrix-valued function $T,U\colon A\to\GL_n(\C)$ 
such that 
\begin{gather*}
    T(x)T(y)=\alpha^{(\beta,\rho)}(x,y)T(x+y),
    \quad
    U(x)U(y)=\eta^{(\beta,\rho)}(x,y)U(x\circ y),\\
    U(x)T(y)U(x)^{-1}T(x)=\mu^{(\beta,\rho)}(x,y)T(x\circ y)
\end{gather*}
for all $x,y\in A$, where $\alpha^{(\beta,\rho)},\eta^{(\beta,\rho)},\mu^{(\beta,\rho)}\colon A\times A\to\C^{\times}$
are maps. 
By convention, the identity matrix 
will be the representative of $\beta(0)$ and $\rho(0)$.  
\end{rem}

In the context of Remark \ref{rem:matrix_valued}, two 
degree-$n$ projective representations 
with matrix-valued functions $(T,U)$ and $(T_1,U_1)$ of a skew brace $A$ 
are \emph{equivalent} 
if there exist $f\in\GL_n(\C)$ 
and maps 
$\theta,\gamma\colon A\to \C^{\times}$ such that
$T_1(x)=\theta(x)fT (x)f^{-1}$ and 
$U_1(x)=\gamma(x)fU (x)f^{-1}$ for all $x\in A$. This does not
depend on the choice of matrix-valued functions.

\begin{pro}
    \label{pro:projrepFS}
    Let $A$ be a finite skew brace and 
    $(\beta,\rho)$ be a projective representation of $A$. Then 
    $(\alpha^{(\beta,\rho)},\mu^{(\beta,\rho)})\in Z^2_b(A,\C^\times)$
    and 
    $\eta^{(\beta,\rho)}\in Z^2(A_\circ,\C^{\times})$.
\end{pro}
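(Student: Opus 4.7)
The plan is to derive each of the required identities from associativity of matrix multiplication combined with the three defining relations
\[
T(x)T(y)=\alpha(x,y)T(x+y),\quad U(x)U(y)=\eta(x,y)U(x\circ y),
\]
\[
U(x)T(y)U(x)^{-1}T(x)=\mu(x,y)T(x\circ y).
\]
The normalization conditions \eqref{eq:FS1}, \eqref{eq:FS2} and the corresponding normality of $\eta$ follow immediately from the convention $T(0)=U(0)=I$: setting $x=0$ or $y=0$ in the three defining relations forces the corresponding scalars to equal $1$. The group cocycle condition \eqref{eq:FS3} for $\alpha$ and the analogous condition for $\eta$ (showing $\eta\in Z^2(A_\circ,\C^\times)$) are nothing but associativity of $T(x)T(y)T(z)$ and of $U(x)U(y)U(z)$, computed by choosing which pair to associate first.

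For \eqref{eq:FS4} I would compute the quantity $U(x)U(y)T(z)U(y)^{-1}U(x)^{-1}T(x\circ y)$ in two different ways. First, using $U(x)U(y)=\eta(x,y)U(x\circ y)$ the scalar $\eta(x,y)$ cancels with its inverse inside the conjugation, reducing the expression to $U(x\circ y)T(z)U(x\circ y)^{-1}T(x\circ y)=\mu(x\circ y,z)T((x\circ y)\circ z)$. Second, substitute $T(x\circ y)=\mu(x,y)^{-1}U(x)T(y)U(x)^{-1}T(x)$, cancel $U(x)^{-1}U(x)$ in the middle, and apply the defining relation twice: once to the inner block $U(y)T(z)U(y)^{-1}T(y)=\mu(y,z)T(y\circ z)$, and once to $U(x)T(y\circ z)U(x)^{-1}T(x)=\mu(x,y\circ z)T(x\circ(y\circ z))$. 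Since $(x\circ y)\circ z=x\circ(y\circ z)$ the two $T$-factors coincide and comparing the scalars yields \eqref{eq:FS4}.

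For the compatibility condition \eqref{eq:FS5} I would instead compute $U(x)T(y)T(z)U(x)^{-1}T(x)$ in two ways. Grouping $T(y)T(z)=\alpha(y,z)T(y+z)$ and applying the defining relation produces $\alpha(y,z)\mu(x,y+z)T(x\circ(y+z))$. Inserting $U(x)^{-1}U(x)$ between $T(y)$ and $T(z)$ and applying the defining relation twice, one instead gets $\mu(x,y)\mu(x,z)\,T(x\circ y)T(x)^{-1}T(x\circ z)$. The rewrite $T(x)^{-1}=\alpha(x,-x)^{-1}T(-x)$, followed by two applications of $T(a)T(b)=\alpha(a,b)T(a+b)$, converts this into
\[
\mu(x,y)\mu(x,z)\,\alpha(x,-x)^{-1}\alpha(x\circ y,-x)\alpha(x\circ y-x,x\circ z)\,T(x\circ y-x+x\circ z).
\]
The crucial (and essentially only) nontrivial piece of bookkeeping is the identity $x\circ y-x+x\circ z=x\circ(y+z)$ in the possibly non-abelian additive group, which holds by direct expansion using $x\circ y=x+\lambda_x(y)$, the cancellation $-x+x=0$, and $\lambda_x(y)+\lambda_x(z)=\lambda_x(y+z)$. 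Combining this identification with $\alpha(x,-x)=\alpha(-x,x)$ (obtained from \eqref{eq:FS3} at $(x,-x,x)$ together with \eqref{eq:FS1}) and equating the two expressions yields \eqref{eq:FS5} after an elementary rearrangement.

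The main potential obstacle is the additive bookkeeping in the last step—specifically verifying $x\circ y-x+x\circ z=x\circ(y+z)$ and correctly tracking the $\alpha$-scalars when inverting $T(x)$—since $A_+$ need not be abelian. Everything else is routine manipulation of the three defining relations.
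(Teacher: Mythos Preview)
Your proposal is correct and follows essentially the same approach as the paper: both compute the same double expressions for $T(x)T(y)T(z)$, $U(x)U(y)T(z)U(y)^{-1}T(y)U(x)^{-1}T(x)$, and $U(x)T(y)T(z)U(x)^{-1}T(x)$ to obtain \eqref{eq:FS3}, \eqref{eq:FS4}, and the compatibility condition respectively (the paper lands on the equivalent form \eqref{eq:equivFS5} rather than \eqref{eq:FS5}, but this is immaterial). One minor remark: the identity $x\circ y-x+x\circ z=x\circ(y+z)$ that you flag as the ``crucial nontrivial bookkeeping'' is nothing other than the defining axiom of a skew brace, so no verification via $\lambda_x$ is needed.
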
 

\begin{proof}
    Let $(T,U)$ be a matrix-valued functions of $(\beta,\rho)$. 
    We will omit the indices of $\alpha$, $\mu$ and $\eta$. 
    Since  
    \[
    T(a)=T(0+a)=\alpha(0,a)^{-1}T(0)T(a), 
    \]
    it follows that $\alpha(0,a)=1$. Similarly, $\alpha(a,0)=1$. In addition,
    \begin{align*}
    \alpha(x,y+z)\alpha(y,z)T(x)T(y)T(z)&=T(x+(y+z))\\
    &=T((x+y)+z)\\
    &= \alpha(x+y,z)\alpha(x,y)T(x)T(y)T(z).
    \end{align*}
    Thus, $\alpha\in Z^2(A_+,\C^{\times})$. Similarly, one has $\eta\in Z^2(A_\circ,\C^{\times})$.
    Moreover,
    \begin{align*}
        & \mu(x,y\circ z)^{-1}\mu(y,z)^{-1} U(x)U(y)T(z)U(y)^{-1}T(y)U(x)^{-1}T(x)\\
        &=T(x\circ (y\circ z))\\
        &=T((x\circ y)\circ z)\\
        &= \mu(x\circ y,z)^{-1}\mu(x,y)^{-1}U(x)U(y)T(z)U(y)^{-1}T(y)U(x)^{-1}T(x).
    \end{align*}
    Finally, we have
    \begin{align*}
       & \alpha(x\circ y,\lambda_x(z))\alpha(x,\lambda_x(z))\mu(x,y)^{-1}\mu(x,z)^{-1}T(y)^{U(x)}T(z)^{U(x)}T(x)\\
        &= T(x\circ y-x+x\circ z)\\
        &=T(x\circ(y+z))\\
        &= \mu(x+y,z)^{-1}\alpha(y,z)^{-1}(T(y)T(z))^{U(x)}T(x).
    \end{align*}
    This gives us \eqref{eq:equivFS5} and concludes the proof.
    \end{proof}
    

\begin{defn}
    Let $A$ be a skew brace and 
    $\alpha,\mu \colon A\times A\to \C^{\times}$ be maps. A pair
    $(\beta,\rho)$, where 
    $\beta,\rho\colon A\to\GL_n(\C)$ are maps, 
    is an 
    $(\alpha,\mu)$-representation of $A$ if 
    \begin{align*}
        &\beta(0)=\rho(0)=\id,\\
        &\beta(x)\beta(y)=\alpha(x,y)\beta(x+y),\\
        &\rho(x)\rho(y)=\mu(x,y)\rho(x\circ y),\\
        &\rho(x)\beta(y)\rho(x)^{-1}\beta(x)=\mu(x,y)\beta(x\circ y),
    \end{align*}
    for all $x,y\in A$. 
\end{defn}

\begin{rem}
    An $(\alpha,\mu)$-representation is a matrix-valued function of a unique
    projective representation. We will use these two 
    ways of thinking about $(\alpha,\mu)$-representations 
    depending on our needs. 
\end{rem}

Let $A$ be a finite skew brace. We say that an 
$(\alpha,\mu)$-representation of $A$ is \emph{equivalent} to
an $(\alpha_1,\mu_1)$-representation of $A$ if 
they are equivalent as projective representations. 

\begin{rem}
    By Proposition \ref{pro:projrepFS}, 
    if there exists  an $(\alpha,\mu)$-representation $(\beta,\rho)$, 
    then $(\alpha,\mu)\in Z_b^2(A,\C^{\times})$. 
\end{rem}

\begin{pro}
    Let $A$ be a finite skew brace and 
    $(\beta,\rho)$ and $(\beta_1,\rho_1)$ be respectively $(\alpha,\mu)$- and $(\alpha_1,\mu_1)$-representations of $A$. 
    If they are equivalent, then $(\alpha,\mu)$ and $(\alpha_1,\mu_1)$ are cohomologous. Moreover, $(\alpha,\mu)$ is a coboundary if and only if $(\beta,\rho)$ is 
    equivalent to a representation.
\end{pro}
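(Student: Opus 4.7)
The plan is to unwind the definition of equivalence of projective representations, substitute into the three defining relations of an $(\alpha,\mu)$-representation, and read off the coboundary data. Write the equivalence in matrix-valued form: there exist $f\in\GL_n(\C)$ and maps $\theta,\gamma\colon A\to\C^{\times}$ with $\theta(0)=\gamma(0)=1$ such that $\beta_1(x)=\theta(x)f\beta(x)f^{-1}$ and $\rho_1(x)=\gamma(x)f\rho(x)f^{-1}$ for all $x\in A$.

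First I would substitute these into $\beta_1(x)\beta_1(y)=\alpha_1(x,y)\beta_1(x+y)$. Since conjugation by $f$ is an algebra morphism and $\beta(x)\beta(y)=\alpha(x,y)\beta(x+y)$, the matrix parts cancel and I get $\alpha_1(x,y)\alpha(x,y)^{-1}=\theta(x)\theta(y)\theta(x+y)^{-1}=\partial_+\theta(x,y)$. An analogous substitution into $\rho_1(x)\rho_1(y)=\mu_1(x,y)\rho_1(x\circ y)$ gives $\mu_1(x,y)\mu(x,y)^{-1}=\partial_\circ\gamma(x,y)$. To merge these into a single coboundary, I would next use the mixed relation $\rho_1(x)\beta_1(y)\rho_1(x)^{-1}\beta_1(x)=\mu_1(x,y)\beta_1(x\circ y)$; the factors of $\gamma(x)$ from $\rho_1(x)$ and $\rho_1(x)^{-1}$ cancel, and what remains is $\mu_1(x,y)\mu(x,y)^{-1}=\theta(x)\theta(y)\theta(x\circ y)^{-1}=\partial_\circ\theta(x,y)$. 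Hence with the single map $h=\theta$ we have both $\alpha_1\alpha^{-1}=\partial_+h$ and $\mu_1\mu^{-1}=\partial_\circ h$, so $(\alpha_1\alpha^{-1},\mu_1\mu^{-1})\in B_b^2(A,\C^{\times})$ and the two factor sets are cohomologous.

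For the second statement, the ``if'' direction is immediate from the first part, since any representation is a $(1,1)$-representation and $(1,1)$ is a coboundary (take $h=1$). For the ``only if'' direction, assume $\alpha=\partial_+ h$ and $\mu=\partial_\circ h$ for some $h\colon A\to\C^{\times}$ with $h(0)=1$, and define $\beta_1(x)=h(x)^{-1}\beta(x)$ and $\rho_1(x)=h(x)^{-1}\rho(x)$. A direct check using the three defining identities for $(\beta,\rho)$ shows that the scalars supplied by $h$ cancel exactly the factors $\alpha(x,y)$, $\mu(x,y)$ appearing in the products $\beta(x)\beta(y)$, $\rho(x)\rho(y)$ and $\rho(x)\beta(y)\rho(x)^{-1}\beta(x)$, so $(\beta_1,\rho_1)$ is an honest representation, and by construction it is equivalent to $(\beta,\rho)$ as a projective representation (take $f=\id$ and $\theta=\gamma=h^{-1}$).

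I do not anticipate a genuine obstacle: the argument is essentially bookkeeping once the third, ``mixed'' relation is used to force the additive and multiplicative scaling factors to agree as a single map $h$. The only mild subtlety is making sure one really needs the mixed identity (the first two alone would only show $\alpha_1\alpha^{-1}=\partial_+\theta$ and $\mu_1\mu^{-1}=\partial_\circ\gamma$ with potentially different $\theta,\gamma$), which is why I would prioritise performing that substitution explicitly and pointing out that the $\gamma(x)$ factors cancel in $\rho_1(x)\cdots\rho_1(x)^{-1}$, leaving only $\theta$.
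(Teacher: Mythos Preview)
Your proposal is correct and follows essentially the same route as the paper: substitute the equivalence data into the additive relation to get $\alpha_1\alpha^{-1}=\partial_+\theta$, then use the mixed relation (where the $\gamma(x)$ factors cancel) to obtain $\mu_1\mu^{-1}=\partial_\circ\theta$ with the \emph{same} map $\theta$, and for the second claim rescale by $h^{-1}$ to produce a genuine representation. If anything you are slightly more explicit than the paper, since you spell out why the mixed identity (rather than the $\rho$-relation alone) is needed and you treat the ``if'' direction of the biconditional separately, whereas the paper leaves that as an implicit consequence of the first part.
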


\begin{proof}
    Assume that there are an invertible matrix $f$ 
    and maps $\theta,\gamma\colon A\to \C^{\times}$ such that
    $\theta,\gamma\colon A\to \C^{\times}$ such that
    $\beta_1(x)=\theta(x)f\beta (x)f^{-1}$ and 
    $\rho_1(x)=\gamma(x)f\rho(x) f^{-1}$ for all $x\in A$. 
       Then $\beta(x)\beta(y)=\alpha(x,y)\beta(x+y)$ implies that
    \[
    \beta_1(x)\beta_1(y)=\alpha(x,y)\theta(x)\theta(y)\theta(x+y)\beta(x+y).
    \]
    Thus $\alpha_1=\alpha \partial_+ \theta$. In addition,  $\rho(x)\beta(y)\rho(x)^{-1}\beta(x)=\mu(x,y)\beta(x\circ y)$ 
    implies that
    \[\rho_1(x)\beta_1(y)\rho_1(x)^{-1}\beta_1(x)=\mu(x,y)\theta(x)\theta(y)\theta(x\circ y)^{-1}\beta(x\circ y).
    \]
    Therefore $\mu=\mu_1\partial_{\circ}\theta$.
    
    Assume that there is a map $\theta \colon A\to \C^{\times}$ such that $(\alpha,\mu)=(\partial_+ \theta,\partial_{\circ}\theta)$. Let 
    $\beta_2(x) =\theta(x)^{-1}\beta(x)$ and 
    $\rho_2(x) = \theta(x)^{-1}\rho(x)$. 
    Straightforward computations show that this pair of maps 
    is a linear representation.
\end{proof}


The following proposition is straightforward. 

\begin{pro}
\label{alphmualgebra}
    Let $A$ be a finite skew brace and 
    $(\alpha,\mu)\in Z^2_b(A,\C^{\times})$. 
    Consider the vector space $\C[A]$ 
    with basis $\{e_a:a\in A\}$. Let 
    \[
    e_a\cdot e_b=\alpha(a,b)e_{a+b},
    \quad
    e_a\circ e_b=\mu(a,b)e_{a\circ b},
    \quad 
    a,b\in A, 
    \]
    and extend these operations bilinearly to $\C[A]$. 
    Then 
    both $(\C[A],\cdot)$ and 
    $(\C[A],\circ)$ are algebras
    with the same neutral element 
    for multiplication. Moreover, 
    the elements $e_a$ with $a\in A$ are invertible with respect to both operations and 
    \[ 
    e_a^{-1}=\alpha(a,-a)^{-1}e_{-a},
    \quad 
    e_{a}'=\mu(a,a')^{-1}e_{a'}.\qedhere 
    \]
\end{pro}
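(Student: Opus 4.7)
The plan is to verify each assertion by a direct bilinear-extension argument, reducing everything to identities on the basis $\{e_a : a \in A\}$. Once the properties hold on basis elements, they extend to all of $\C[A]$ by linearity.

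First, for associativity of $(\C[A],\cdot)$, I would compute on a triple of basis elements:
\[
(e_a\cdot e_b)\cdot e_c=\alpha(a,b)\alpha(a+b,c)e_{a+b+c},\quad e_a\cdot(e_b\cdot e_c)=\alpha(b,c)\alpha(a,b+c)e_{a+b+c}.
\]
These agree precisely by the 2-cocycle identity \eqref{eq:FS3}, since $(\alpha,\mu)\in Z^2_b(A,\C^\times)$. The analogous calculation for $(\C[A],\circ)$ reduces associativity to identity \eqref{eq:FS4}.

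Next, for the identity element, the normalization conditions \eqref{eq:FS1} and \eqref{eq:FS2} give $\alpha(0,b)=\alpha(a,0)=1$ and $\mu(0,b)=\mu(a,0)=1$ for all $a,b\in A$, so $e_0$ acts as a two-sided identity for both $\cdot$ and $\circ$; in particular the two algebras share the same identity.

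For invertibility, I would first extract two symmetry relations by specializing the cocycle identities. Substituting $(x,y,z)=(a,-a,a)$ into \eqref{eq:FS3} and using normalization yields
\[
\alpha(a,-a)=\alpha(-a,a),
\]
and substituting $(x,y,z)=(a,a',a)$ into \eqref{eq:FS4} yields $\mu(a,a')=\mu(a',a)$. With these symmetries in hand, a direct check shows that $\alpha(a,-a)^{-1}e_{-a}$ is a two-sided inverse of $e_a$ under $\cdot$, and $\mu(a,a')^{-1}e_{a'}$ is a two-sided inverse of $e_a$ under $\circ$, giving the stated formulas.

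I do not anticipate any real obstacle: the entire argument is a textbook twisted group algebra construction carried out in parallel for the additive and multiplicative group structures of the skew brace. The only minor subtlety is the symmetry $\alpha(a,-a)=\alpha(-a,a)$ (and its multiplicative counterpart), but as shown above this falls out of the cocycle conditions at a single substitution, so the whole proposition is indeed routine once \eqref{eq:FS1}--\eqref{eq:FS4} are invoked.
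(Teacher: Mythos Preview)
Your proposal is correct and is exactly the routine verification the paper has in mind: the proposition is introduced with ``The following proposition is straightforward'' and carries no proof (the \qedhere sits inside the statement), so your direct check on basis elements using \eqref{eq:FS1}--\eqref{eq:FS4}, together with the specialization yielding $\alpha(a,-a)=\alpha(-a,a)$ and $\mu(a,a')=\mu(a',a)$, is precisely what is being left to the reader.
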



\begin{defn}
    The structure of Proposition \ref{alphmualgebra} will be 
    called the complex 
    \emph{twisted skew brace algebra} of $A$ with respect
    to $(\alpha,\mu)$. It will be  
    denoted by $\C^{(\alpha,\mu)}[A]$.
\end{defn}

\begin{defn}
The twisted skew brace algebras $\C^{(\alpha,\mu)}[A]$ and $\C^{(\alpha_1,\mu_1)}[A]$ are called \emph{equivalent} if there exist a map 
$t\colon A\to \C^{\times}$ such that 
$\psi(e_a)=t(a)e_a$ 
for all $a\in A$ 
and a  
linear isomorphism $\psi\colon 
\C^{(\alpha,\mu)}[A]\to \C^{(\alpha_1,\mu_1)}[A]$ such that
for $i\in\{1,2\}$ the diagrams 
\[\begin{tikzcd}
	{\C^{(\alpha,\mu)}[A]\otimes \C^{(\alpha,\mu)}[A]} & {\C^{(\alpha,\mu)}[A]} \\
	{\C^{(\alpha_1,\mu_1)}[A]\otimes \C^{(\alpha_1,\mu_1)}[A]} & {\C^{(\alpha_1,\mu_1)}[A]}
	\arrow["{m_i}", from=1-1, to=1-2]
	\arrow["{\psi\otimes\psi }"', from=1-1, to=2-1]
	\arrow["\psi", from=1-2, to=2-2]
	\arrow["{m_i}", from=2-1, to=2-2]
\end{tikzcd}\]
corresponding to $m_1(a\otimes b)=a\cdot b$ and 
$m_2(a\otimes b)=a\circ b$, 
are commutative.
\end{defn}

\begin{pro}
    Two twisted skew brace algebras $\C^{(\alpha,\mu)}[A]$ and $\C^{(\alpha_1,\mu_1)}[A]$ are equivalent if and only if $(\alpha,\mu)$ and $(\alpha_1,\mu_1)$ are cohomologous.
\end{pro}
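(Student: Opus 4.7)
The plan is to translate the equivalence of twisted skew brace algebras directly into the coboundary condition by computing the effect of $\psi$ on the two multiplication tables, and conversely to read off a candidate $\psi$ from any cohomology witness $h$.

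For the forward direction, I would start from an equivalence $\psi\colon\C^{(\alpha,\mu)}[A]\to\C^{(\alpha_1,\mu_1)}[A]$ with $\psi(e_a)=t(a)e_a$. Preservation of the common unit $e_0$ forces $t(0)=1$. Commutativity of the diagram for $m_1$ gives, on a pair of basis vectors,
\[
\psi(e_a\cdot e_b)=\alpha(a,b)\,t(a+b)\,e_{a+b}\quad\text{and}\quad \psi(e_a)\cdot\psi(e_b)=t(a)t(b)\alpha_1(a,b)\,e_{a+b},
\]
so $\alpha(a,b)\alpha_1(a,b)^{-1}=t(a)t(b)t(a+b)^{-1}=\partial_+ t(a,b)$. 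The analogous computation with $m_2$ yields $\mu(a,b)\mu_1(a,b)^{-1}=\partial_\circ t(a,b)$, and hence $(\alpha\alpha_1^{-1},\mu\mu_1^{-1})=(\partial_+t,\partial_\circ t)\in B^2_b(A,\C^\times)$, so $(\alpha,\mu)$ and $(\alpha_1,\mu_1)$ are cohomologous.

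For the reverse direction, assume $(\alpha\alpha_1^{-1},\mu\mu_1^{-1})=(\partial_+h,\partial_\circ h)$ for some $h\colon A\to\C^\times$ with $h(0)=1$. Define the linear isomorphism $\psi\colon\C^{(\alpha,\mu)}[A]\to\C^{(\alpha_1,\mu_1)}[A]$ by $\psi(e_a)=h(a)e_a$; since $h$ takes values in $\C^\times$, $\psi$ is invertible. Reversing the computation above shows that both diagrams (one for $m_1$, one for $m_2$) commute on basis elements $e_a\otimes e_b$, and bilinearity extends this to all of $\C^{(\alpha,\mu)}[A]\otimes\C^{(\alpha,\mu)}[A]$.

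Both directions are essentially bookkeeping, so no serious obstacle appears. The only points that require a bit of care are the normalization $t(0)=h(0)=1$ (needed to match the coboundary convention of the paper and to ensure that $\psi$ preserves the common neutral element $e_0$) and the fact that one must verify the two diagrams for $m_1$ and $m_2$ separately; in particular, the compatibility relation \eqref{eq:FS5} plays no role here because the equivalence of twisted skew brace algebras is defined purely in terms of the two bilinear operations.
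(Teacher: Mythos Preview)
Your proposal is correct and follows essentially the same argument as the paper's proof: both directions are handled by comparing $\psi(e_a\cdot e_b)$ with $\psi(e_a)\cdot\psi(e_b)$ (and the analogous $\circ$-computation) on basis vectors, and the converse uses the diagonal map $e_a\mapsto h(a)e_a$. Your treatment is in fact slightly more explicit about the normalization $t(0)=1$ and the bilinear extension, but the underlying route is the same.
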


\begin{proof}
    Assume that there is an equivalence of twisted skew brace algebras with maps $\psi\colon \C^{(\alpha,\mu)}[A]\to \C^{(\alpha_1,\mu_1)}[A]$ and $t\colon A\to \C^{\times}$. Then 
    \[
    \psi(e_a\cdot e_b)=\alpha(a,b)t(a+b)e_{a+b},
    \quad 
    \psi(e_a)\cdot\psi(e_b)=t(a)t(b)\alpha_1(a,b)e_{a+b}
    \]
    for all $a,b\in A$. 
    Thus $\alpha=\partial_+t\alpha_1$. Similarly one shows that $\mu=\partial_{\circ}t\mu_1$.
    
    Conversely, assume that $(\alpha,\mu)=(\partial_+ t\alpha_1,\partial_{\circ}t\mu_1)$ for some $t\colon A\to\C^{\times}$. Routine calculations show that 
    the map 
    \[
    \psi\colon \C^{(\alpha,\mu)}[A]\to \C^{(\alpha_1,\mu_1)}[A],
    \quad
    e_a\mapsto t(a)e_a,
    \]
    is an equivalence of twisted skew brace algebras.
\end{proof}

If $M$ is a vector space, $\End(M)$ 
denotes the set of linear maps $M\to M$. 

\begin{defn}
    A \emph{module} over the twisted skew brace algebra $\C^{(\alpha,\mu)}[A]$ is a $\C$-vector space
    $M$ together with two algebra homomorphisms 
    \[
    \beta\colon (\C^{(\alpha,\mu)}[A],+,\cdot)\to\End(M),
    \quad
    \rho\colon (\C^{(\alpha,\mu)}[A],+,\circ)\to\End(M),
    \]
    such that 
    \begin{equation}
    \label{eq:modulecompcond}
    \beta(e_a\circ e_b)=\rho(e_a)\beta(e_b)\rho(e_a)^{-1}\beta(e_a)
    \end{equation}
    for all $a,b\in A$. 
\end{defn}

\begin{lem}
    \label{lem:repimpmod}
    Let $A$ be a finite skew brace and 
    $(M,\beta,\rho)$ be an $(\alpha,\mu)$-representation. 
    Define the maps 
    \begin{align*}
    &\overline{\beta}\colon\C^{(\alpha,\mu)}[A]\to\End(M), &&
    \overline{\beta}\left(\sum_{a\in A} \lambda_a e_a\right)=\sum_{a\in A}\lambda_a \beta(a),\\
    &\overline{\rho}\colon\C^{(\alpha,\mu)}[A]\to\End(M),
    &&
    \overline{\rho}\left(\sum_{a\in A} \lambda_a e_a\right)=\sum_{a\in A}\lambda_a \rho(a),
    \end{align*}
    Then $(M,\overline{\beta},\overline{\rho})$ 
    is a $\C^{(\alpha,\mu)}[A]$-module.
\end{lem}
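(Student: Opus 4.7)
The plan is to verify the three requirements of a $\C^{(\alpha,\mu)}[A]$-module directly on the basis $\{e_a : a \in A\}$, after which everything extends by linearity. Additivity and $\C$-linearity of $\overline{\beta}$ and $\overline{\rho}$ are built into the definitions, so the content lies in checking multiplicativity with respect to $\cdot$ and $\circ$ respectively, together with the compatibility \eqref{eq:modulecompcond}.

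First I would verify that $\overline{\beta}$ is a homomorphism $(\C^{(\alpha,\mu)}[A],+,\cdot)\to\End(M)$. On the basis we compute
\[
\overline{\beta}(e_a\cdot e_b)=\alpha(a,b)\overline{\beta}(e_{a+b})=\alpha(a,b)\beta(a+b)=\beta(a)\beta(b)=\overline{\beta}(e_a)\overline{\beta}(e_b),
\]
using the relation $\beta(a)\beta(b)=\alpha(a,b)\beta(a+b)$ from the definition of an $(\alpha,\mu)$-representation. Unitality follows because $\beta(0)=\id$ and, by Proposition \ref{alphmualgebra}, $e_0$ is the neutral element of both algebra structures on $\C^{(\alpha,\mu)}[A]$. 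The verification that $\overline{\rho}$ is a homomorphism $(\C^{(\alpha,\mu)}[A],+,\circ)\to\End(M)$ is completely analogous, using the relation $\rho(a)\rho(b)=\mu(a,b)\rho(a\circ b)$.

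For the compatibility condition I first note that $\overline{\rho}(e_a)=\rho(a)$ is an invertible endomorphism of $M$: indeed, $e_a$ is invertible in $(\C^{(\alpha,\mu)}[A],\circ)$ by Proposition \ref{alphmualgebra}, so its image under the algebra homomorphism $\overline{\rho}$ is invertible (explicitly, $\rho(a)^{-1}=\mu(a,a')^{-1}\rho(a')$). Then on basis elements
\[
\overline{\beta}(e_a\circ e_b)=\mu(a,b)\beta(a\circ b)=\rho(a)\beta(b)\rho(a)^{-1}\beta(a)=\overline{\rho}(e_a)\overline{\beta}(e_b)\overline{\rho}(e_a)^{-1}\overline{\beta}(e_a),
\]
where the middle equality is precisely the third defining relation of an $(\alpha,\mu)$-representation. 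Since both sides depend linearly on each of the two arguments when extended from the basis, \eqref{eq:modulecompcond} holds for arbitrary elements $e_a$, $e_b$ of the basis, which is exactly what the module axiom demands.

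There is essentially no obstacle: the lemma is a tautological repackaging that turns an $(\alpha,\mu)$-representation into a module over the twisted skew brace algebra. The only point worth highlighting during the write-up is the invertibility of $\overline{\rho}(e_a)$, which is needed to make sense of the right-hand side of \eqref{eq:modulecompcond} and which follows painlessly from the fact that the basis elements $e_a$ are themselves invertible in $\C^{(\alpha,\mu)}[A]$.
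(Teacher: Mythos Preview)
Your proof is correct and follows exactly the paper's approach: verify on basis elements that $\overline{\beta}$ and $\overline{\rho}$ are algebra homomorphisms for $\cdot$ and $\circ$ respectively, then check the compatibility \eqref{eq:modulecompcond} via the relation $\mu(a,b)\beta(a\circ b)=\rho(a)\beta(b)\rho(a)^{-1}\beta(a)$. You supply a bit more detail than the paper (unitality, and the invertibility of $\overline{\rho}(e_a)$), but the argument is the same.
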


\begin{proof}
    Since the maps $\overline{\rho}$ and $\overline{\beta}$ are obtained as linear extensions, they are vector spaces homomorphisms. Routine calculations show that $\overline{\beta}(e_a\cdot e_b)=\overline{\beta}(e_a)\overline{\beta}(e_b)$ and  $\overline{\rho}(e_a\circ e_b)=\overline{\rho}(e_a)\overline{\rho}(e_b)$ for all $a,b\in A$. Finally, 
    \begin{align*}
        \overline{\beta}(e_a\circ e_b)&=\mu(a,b)\beta(a\circ b)
        = \overline{\rho}(e_a)\overline{\beta}(e_b)\overline{\rho}(e_a)^{-1}\overline{\beta}(e_a)
    \end{align*}
    for all $a,b\in A$. 
\end{proof}

\begin{pro}
    \label{pro:bracealgrelation}
    Let $\C^{(\alpha,\mu)}[A]$ be a twisted skew brace algebra. 
    Then 
    \[
    e_a\circ(e_b\cdot v)= (e_a\circ e_b)\cdot e_a^{-1}\cdot (e_a\circ v)
    \]
    for all $a,b\in A$ and $v\in\C^{(\alpha,\mu)}[A]$. 
\end{pro}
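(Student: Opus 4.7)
The plan is to verify the identity on basis elements and then appeal to bilinearity, so that the whole claim reduces to the cocycle condition \eqref{eq:FS5} together with the defining skew brace identity $a\circ(b+c)=a\circ b-a+a\circ c$.

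First I would observe that both operations $\cdot$ and $\circ$ on $\C^{(\alpha,\mu)}[A]$ are $\C$-bilinear, so it suffices to verify the equality when $v=e_c$ for a single basis element $e_c$, $c\in A$. I would then expand the left-hand side using the definitions of Proposition \ref{alphmualgebra}:
\[
e_a\circ(e_b\cdot e_c)=\alpha(b,c)\,\mu(a,b+c)\,e_{a\circ(b+c)}.
\]
For the right-hand side, I would use the formula $e_a^{-1}=\alpha(a,-a)^{-1}e_{-a}$ and expand the three successive $\cdot$-products:
\[
(e_a\circ e_b)\cdot e_a^{-1}\cdot(e_a\circ e_c)
=\mu(a,b)\mu(a,c)\alpha(a,-a)^{-1}\alpha(a\circ b,-a)\alpha(a\circ b-a,a\circ c)\,e_{a\circ b-a+a\circ c}.
\]

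Next I would match the basis vectors: by the skew brace axiom, $a\circ b-a+a\circ c=a\circ(b+c)$, so the supports agree. It then remains to compare the scalar coefficients, i.e.\ to check
\[
\alpha(b,c)\mu(a,b+c)=\mu(a,b)\mu(a,c)\alpha(a,-a)^{-1}\alpha(a\circ b,-a)\alpha(a\circ b-a,a\circ c).
\]
This is precisely the cocycle identity \eqref{eq:FS5} applied with $(x,y,z)=(a,b,c)$, after using the easy consequence $\alpha(-a,a)=\alpha(a,-a)$ which follows from \eqref{eq:FS3} by setting $(x,y,z)=(a,-a,a)$ and $\alpha(0,\cdot)=\alpha(\cdot,0)=1$.

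The only real ``obstacle'' here is keeping the bookkeeping of scalars straight; once the basis-element computation is laid out side by side, the identity \eqref{eq:FS5} drops out, and bilinearity finishes the proof for arbitrary $v\in\C^{(\alpha,\mu)}[A]$.
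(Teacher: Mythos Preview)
Your proof is correct and follows essentially the same approach as the paper: reduce to basis elements $v=e_c$ by bilinearity, expand both sides, match the basis vectors via the skew brace axiom, and identify the scalar equality with the cocycle condition. The only cosmetic difference is that the paper simplifies the right-hand side to $\mu(a,b)\mu(a,c)\alpha(a\circ b,\lambda_a(c))\alpha(a,\lambda_a(c))^{-1}$ and then invokes the equivalent form \eqref{eq:equivFS5}, whereas you keep the factors $\alpha(a,-a)^{-1}\alpha(a\circ b,-a)\alpha(a\circ b-a,a\circ c)$ and invoke \eqref{eq:FS5} directly together with $\alpha(a,-a)=\alpha(-a,a)$.
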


\begin{proof}
By the linearity of the operations, it is enough to show the identity for $v=e_c$ for some $c\in A$. On the one hand,
\[e_a\circ(e_b\cdot e_c)= \mu(a,b+c)\alpha(b,c) e_{a\circ(b+c)}.
\]
On the other hand,
\[
(e_a\circ e_b)\cdot e_a^{-1}\cdot (e_a\circ e_c)= \mu(a,b)\alpha(a\circ b,\lambda_a(c))\alpha(x,\lambda_a(c))^{-1}\mu(a,c) e_{a\circ(b+c)}.
\]
This yields the equality by \eqref{eq:equivFS5}.
\end{proof}

\begin{exa}
    Let $n\geq1$. The direct product  $(\C^{(\alpha,\mu)}[A])^n$ is a $\C^{(\alpha,\mu)}[A]$-module.
\end{exa}


\begin{pro}
    Let $\C^{(\alpha,\mu)}[A]$ be a twisted skew brace algebra.
    There is a bijective correspondence between 
    $(\alpha,\mu)$-representations of $A$ and 
    $\C^{(\alpha,\mu)}[A]$-modules.
\end{pro}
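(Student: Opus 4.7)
The plan is to make Lemma \ref{lem:repimpmod} one direction of an explicit bijection and then construct the inverse direction by restriction to the basis $\{e_a : a\in A\}$. Given a $\C^{(\alpha,\mu)}[A]$-module $(M,\overline{\beta},\overline{\rho})$, define
\[
\beta\colon A\to\End(M),\quad a\mapsto\overline{\beta}(e_a),\qquad
\rho\colon A\to\End(M),\quad a\mapsto\overline{\rho}(e_a).
\]
I would then check that $(\beta,\rho)$ is an $(\alpha,\mu)$-representation by verifying the four defining identities one by one.

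First, since $e_0$ is the common unit of the two algebra structures of $\C^{(\alpha,\mu)}[A]$ and $\overline{\beta},\overline{\rho}$ are algebra homomorphisms, we get $\beta(0)=\rho(0)=\id$. Next, the identities $e_x\cdot e_y=\alpha(x,y)e_{x+y}$ and $e_x\circ e_y=\mu(x,y)e_{x\circ y}$ in the twisted skew brace algebra immediately yield
\[
\beta(x)\beta(y)=\alpha(x,y)\beta(x+y),\qquad \rho(x)\rho(y)=\mu(x,y)\rho(x\circ y).
\]
By Proposition~\ref{alphmualgebra}, each $e_a$ is invertible with respect to $\circ$, so $\rho(a)=\overline{\rho}(e_a)\in\GL(M)$, which makes the conjugation in the last axiom well-defined. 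Applying $\overline{\beta}$ to the compatibility relation \eqref{eq:modulecompcond} for the basis elements gives
\[
\beta(x\circ y)=\rho(x)\beta(y)\rho(x)^{-1}\beta(x)\cdot\mu(x,y)^{-1}\cdot\mu(x,y)
\]
after rewriting via $\overline{\beta}(e_x\circ e_y)=\mu(x,y)\beta(x\circ y)$, which is the fourth axiom.

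To finish, I would observe that the two constructions are mutually inverse: starting from $(\beta,\rho)$ the linear extensions $\overline{\beta},\overline{\rho}$ of Lemma \ref{lem:repimpmod} restrict on the basis to $\beta,\rho$ again, and conversely a $\C^{(\alpha,\mu)}[A]$-module structure is determined by its values on $\{e_a:a\in A\}$, which are exactly $\beta(a),\rho(a)$. I do not expect any serious obstacle here; the only slightly nontrivial point is the invertibility of $\rho(a)$ needed to write the conjugation, and this is already guaranteed by Proposition~\ref{alphmualgebra}. The calculations are essentially the linear extension of identities that hold on the basis, so no new ingredient beyond what has been assembled in the subsection is required.
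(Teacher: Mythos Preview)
Your proposal is correct and follows exactly the same approach as the paper: Lemma~\ref{lem:repimpmod} gives one direction, and restriction to the basis $\{e_a\}$ gives the other. Your version is simply more detailed than the paper's one-line converse, and you rightly point out the invertibility of $\rho(a)$ via Proposition~\ref{alphmualgebra} (the same argument gives $\beta(a)\in\GL(M)$, which you should mention too, since the definition requires $\beta,\rho$ to land in $\GL_n(\C)$).
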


\begin{proof}
    The direct implication is a consequence of Lemma \ref{lem:repimpmod}. Conversely, let $(M,\beta,\rho)$ 
    be a $\C^{(\alpha,\mu)}[A]$-module. By restricting the maps to the basis of $\C^{(\alpha,\mu)}[A]$ we obtain a projective representation of $A$.
\end{proof}

An important consequence of the previous results is that any factor set $(\alpha,\mu)$ is obtained from a projective representation. In fact this yields an important relation between $(\alpha,\mu)$-representations and coverings of 
finite skew braces.

\begin{defn}
Let 
\begin{equation}
    \label{eq:projective}
\begin{tikzcd}
0\arrow[r] & K \arrow[r] & A\arrow[r] & Q\arrow[r] & 0
\end{tikzcd}
\end{equation}
be an annihilator extension of finite skew braces. 
A complex $(\alpha,\mu)$-representation 
$(\beta,\rho)$ of $Q$  
can be \emph{lifted} to $A$ if there exists 
a representation $\tilde{\beta},\tilde{\rho}\colon A\to\GL_n(\C)$ such that the diagrams
\[
\begin{tikzcd}
A\arrow[r]\arrow[d,"\tilde{\beta}"] & Q \arrow[d]\arrow[d,"\beta"]\\
\GL_n(\C) \arrow[r] & \PGL_n(\C)
\end{tikzcd}
\begin{tikzcd}
A\arrow[r]\arrow[d,"\tilde{\rho}"] & Q \arrow[d]\arrow[d,"\rho"]\\
\GL_n(\C) \arrow[r] & \PGL_n(\C)
\end{tikzcd}
\]
commute; this representation is called a \emph{lift} 
of $(\beta,\rho)$. 
In addition, we say that the annihilator extension \eqref{eq:projective} 
has the \emph{lifting property} if 
every projective representation of $Q$ 
can be lifted to $A$. 
\end{defn}

\begin{thm} 
\label{thm:lift}
Let
\begin{equation}
\begin{tikzcd}
\label{eq:extensiontra}
0\arrow[r] & K \arrow[r] & A\arrow[r] & Q\arrow[r] & 0
\end{tikzcd}
\end{equation}
be an annihilator extension of finite skew braces. 
 The transgression map 
 \[
 \Tra \colon \Hom(K,\C^{\times})\to M_b(Q)
 \]
 is surjective if and only if the extension \eqref{eq:extensiontra} has the lifting property.
\end{thm}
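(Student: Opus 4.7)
The approach is to combine the Hoschild--Serre sequence of Theorem \ref{thm:HoschildSerre} with the bijection between factor sets in $Z_b^2(Q,\C^\times)$ and $(\alpha,\mu)$-representations of $Q$ obtained from modules over the twisted skew brace algebra $\C^{(\alpha,\mu)}[Q]$. In particular, every class in $M_b(Q)$ is represented by the factor set of some $(\alpha,\mu)$-representation, and for such a representation the defining axiom $\rho(x)\rho(y)=\mu(x,y)\rho(x\circ y)$ means the ``pure'' multiplicative 2-cocycle $\eta^{(\beta,\rho)}$ equals $\mu^{(\beta,\rho)}$, so that the analysis of lifts reduces to a single system of cocycle identities rather than two independent ones.

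For the implication $(\Leftarrow)$, assume $\Tra$ is surjective and let $(\beta,\rho)$ be an $(\alpha,\mu)$-representation of $Q$. Surjectivity gives $\phi\in\Hom(K,\C^\times)$ with $\Tra(\phi)=\overline{(\alpha,\mu)}$. Fixing a factor set $(\alpha_0,\mu_0)$ of the extension so that $A\simeq K\times_{(\alpha_0,\mu_0)}Q$, the definition of the transgression yields a map $h\colon Q\to\C^\times$ with $h(0)=1$ satisfying
\[
\alpha=\phi\alpha_0\cdot\partial_+ h,\qquad \mu=\phi\mu_0\cdot\partial_\circ h.
\]
I then set
\[
\tilde\beta(k,x)=\phi(k)h(x)^{-1}\beta(x),\qquad \tilde\rho(k,x)=\phi(k)h(x)^{-1}\rho(x),
\]
and verify three things: $\tilde\beta$ is a homomorphism on $A_+$, $\tilde\rho$ is a homomorphism on $A_\circ$, and the skew brace relation $\tilde\beta(a\circ b)=\tilde\rho(a)\tilde\beta(b)\tilde\rho(a)^{-1}\tilde\beta(a)$ holds. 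The first reduces to the identity $\alpha=\phi\alpha_0\partial_+ h$; the other two both reduce to $\mu=\phi\mu_0\partial_\circ h$ (the $(\alpha,\mu)$-representation axiom is what lets a single function serve on both sides). Commutativity of the two lifting diagrams is then immediate from the construction.

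For $(\Rightarrow)$, take $c=\overline{(\alpha,\mu)}\in M_b(Q)$ and invoke the correspondence with $\C^{(\alpha,\mu)}[Q]$-modules to produce an $(\alpha,\mu)$-representation $(\beta,\rho)$ of $Q$ with factor set $(\alpha,\mu)$. By hypothesis there is a lift $(\tilde\beta,\tilde\rho)$ to $A$, and commutativity of the two diagrams forces $\tilde\beta(a)=t(a)\beta(\pi(a))$ and $\tilde\rho(a)=s(a)\rho(\pi(a))$ for some maps $t,s\colon A\to\C^\times$. Writing out that $\tilde\beta$ is a homomorphism on $A_+$ and that $\tilde\beta(a\circ b)=\tilde\rho(a)\tilde\beta(b)\tilde\rho(a)^{-1}\tilde\beta(a)$ yields
\[
\alpha\circ(\pi\times\pi)=(\partial_+ t)^{-1},\qquad \mu\circ(\pi\times\pi)=(\partial_\circ t)^{-1},
\]
which means $\Inf(c)=0$ in $H_b^2(A,\C^\times)$. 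The exact sequence of Theorem \ref{thm:HoschildSerre} then delivers $c\in\Img(\Tra)$.

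The real work is the routine bookkeeping in the $(\Leftarrow)$ direction, tracking how the factor set $(\alpha_0,\mu_0)$ governs the addition and composition on $K\times_{(\alpha_0,\mu_0)}Q$; beyond Hoschild--Serre and the existence of $(\alpha,\mu)$-representations, no new ideas enter. The delicate point, which would be easy to mis-handle, is that the single scalar $(k,x)\mapsto\phi(k)h(x)^{-1}$ is required to simultaneously trivialize both halves of the pulled-back skew brace cocycle; this is possible precisely because we work with an $(\alpha,\mu)$-representation (forcing $\eta^{(\beta,\rho)}=\mu^{(\beta,\rho)}$) rather than an arbitrary matrix lift of a projective representation.
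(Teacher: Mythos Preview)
Your proof is correct and follows essentially the same approach as the paper's: both directions identify $A\simeq K\times_{(\alpha_0,\mu_0)}Q$, construct the lift via scalars of the form $\phi(k)h(x)^{\pm1}$ in one direction, and in the other reduce the conclusion to $\Inf(c)=0$ so that Theorem~\ref{thm:HoschildSerre} yields $c\in\Img(\Tra)$. One cosmetic quibble: your labels $(\Leftarrow)$ and $(\Rightarrow)$ are swapped relative to the statement as written (``$\Tra$ surjective $\Leftrightarrow$ lifting property'').
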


\begin{proof}
Assume that $A=K\times_{(\alpha,\mu)}Q$. Let  $\beta_1,\rho_1\colon Q\to\GL_n(\C)$ be an $(\alpha_1,\mu_1)$-representation of $Q$.
 Assume that the transgression map is surjective.
 By assumption, there exist a group homomorphism $\phi\colon K\to \C^{\times}$ and a map $h\colon Q\to \C^{\times}$ such that $(\phi\alpha,\phi\mu)=(\alpha_1\partial_+ h,\mu_1\partial_{\circ}h)$. Straightforward computations show that the maps
 $\beta,\rho\colon A\to\GL_n(\C)$, 
 $\beta(q,x)=\phi(q)h(x)\beta_1(x)$, 
 $\rho(q,x)=\phi(q)h(x)\rho_1(x)$, 
 form a representation of $A$ which is a lift of $(\beta_1,\rho_1)$. 
 
Conversely, assume that the extension \eqref{eq:extensiontra} has the lifting property. Then there exist a representation $\beta,\rho\colon A\to\GL_n(\C)$ and maps $\gamma,\theta \colon A\to \C^{\times}$ such that $\beta(k,x)=\gamma(k,x)\beta_1(x)$ and $\rho(k,x)=\theta(k,x)\rho_1(x)$. Then 
\begin{align*}
\beta((k,x)\circ (l,y))&=\gamma(kl\mu(x,y),x\circ y) \beta_1(x\circ y),\\
\prescript{\rho(k,x)}{}{\beta(l,y)}\beta(k,x)&=\gamma(l,y)\gamma(k,x)\mu_1(x,y)\beta_1(x\circ y).
\end{align*}
Therefore
\begin{equation}
    \label{eq:liftimpsurj}
    \mu_1(x,y)=\gamma(l,y)^{-1}\gamma(kl\mu(x,y),x\circ y)\gamma(k,x)^{-1}.
\end{equation}
In addition, by computing both $\beta((k,x)+(l,y))$ and $\beta(k,x)\beta(l,y)$, one obtains that
\[
\alpha_1(x,y)=\gamma(l,y)^{-1}\gamma(kl\alpha(x,y),x+ y)\gamma(k,x)^{-1}.
\]
Thus $\overline{(\alpha_1,\mu_1)}\in \Ker(\Inf)$. By Theorem \ref{thm:HoschildSerre}, 
the transgression map is surjective.
\end{proof}

\subsection*{Acknowledgements}

This work was partially supported by 
the project OZR3762 of Vrije Universiteit Brussel and 
FWO Senior Research Project G004124N. Letourmy
is supported by FNRS. 
We thank the reviewer  
for suggesting Remark \ref{rem:representation}. 

\bibliographystyle{abbrv}
\bibliography{refs}

\end{document}